\documentclass[11pt]{article}
\usepackage{amsmath, amssymb}
\usepackage{amsfonts}
\usepackage[utf8]{inputenc}
\usepackage{amsthm}
\usepackage{graphicx}
\usepackage{csquotes}
\usepackage{xspace}
\usepackage{fullpage}

\graphicspath{{picss/}}

\newtheorem{theorem}{Theorem}[section]
\newtheorem{definition}[theorem]{Definition}
\newtheorem{lemma}[theorem]{Lemma}
\newtheorem{corollary}[theorem]{Corollary}
\newtheorem{proposition}[theorem]{Proposition}
\newtheorem{remark}[theorem]{Remark}

\renewcommand\appendix{\par
  \setcounter{section}{0}
  \setcounter{subsection}{0}
  \setcounter{figure}{0}
  \setcounter{table}{0}
  \renewcommand\thesection{\Alph{section}}
  \renewcommand\thefigure{\Alph{section}\arabic{figure}}
  \renewcommand\thetable{\Alph{section}\arabic{table}}
}

\newcommand{\NP}{{\ensuremath{\mathbf{NP}}}\xspace}

\newcommand{\WL}{\mathrm{WL}}
\newcommand{\EP}{\mathrm{EP}}
\newcommand{\C}{\mathrm{C}}
\newcommand{\IM}{\mathrm{IM}}
\newcommand{\Sch}{\mathrm{Sch}}

\newcommand{\Sym}{\mathrm{Sym}}

\newcommand{\nats}{\mathbb{N}}
\newcommand{\pr}{\mathrm{pr}}
\newcommand{\im}{\mathrm{Im}}

\newcommand{\mult}{\mathrm{Mult}}
\newcommand{\IMt}{\mathrm{IMt}}
\newcommand{\Mat}{\mathrm{Mat}}

\begin{document}

\title{\textbf{Generalizations of $k$-dimensional Weisfeiler-Leman stabilization
}}
\author{Anuj Dawar \and Danny Vagnozzi}
\maketitle

\begin{abstract}
The family of Weisfeiler-Leman equivalences on graphs is a widely
studied approximation of graph isomorphism with many different
characterizations.  We study these, and other approximations of isomorphism defined in terms of refinement operators and Schurian Polynomial Approximation Schemes (SPAS).  The general framework of SPAS allows us to study a number of parameters of the refinement operators based on Weisfeiler-Leman refinement, logic with counting, lifts of Weisfeiler-Leman as defined by Evdokimov and Ponomarenko, and the invertible map test introduced by Dawar and Holm, and variations of these, and establish relationships between them.
\end{abstract}

\section{Introduction}
For convenience, we shall treat graphs as \emph{arc-coloured complete digraphs};  that is to say,  as \emph{labelled partitions} of the set of ordered pairs of vertices (hereafter, we refer to the latter as \emph{arcs}).  For example, an undirected simple graph can be seen as a partition of its arcs into edges, non-edges and loops.  As such, the \emph{graph isomorphism problem} is that of deciding whether there is a colour-preserving bijection between the sets of vertices of two given graphs.  Computationally, this problem is polynomial-time equivalent to finding the orbits of the induced action of the automorphism group of a given graph $G$ on a fixed power of its vertex set $V$ \cite{mathon}.  For short, we refer to the partition of $V^k$ (for any fixed $k$) obtained by this action as the \emph{orbit partition} of $V^k$.
 The graph isomorphism problem (and likewise the problem of determining the orbit partition) is neither known to be solvable in polynomial time nor known to be $\NP$-complete. The best known upper bound to their computational time is quasi-polynomial. This follows from the well-known result by Babai~\cite{Babai16}.

The \emph{classical Weisfeiler-Leman} (WL) algorithm is a well known method for approximating the orbits of the induced action of the automorphism group of a given graph on the set of pairs of arcs. It can be seen as a generalization of the so called \emph{na\"{i}ve colour refinement}.  Given a graph $G$, the WL algorithm produces a \emph{coherent configuration}, which is a partition of the set of arcs of $G$ satisfying certain stability conditions (see Section \ref{cohs} for the definition). A natural generalization of this algorithm was given by Babai: for each $k\in \mathbb{N}$, the $k$-\emph{dimensional Weisfeiler-Leman} ($\WL_k$) algorithm outputs a labelled partition of $k$-tuples of vertices satisfying a similar stability condition and respecting local isomorphism. The running time of the $\WL_k$ algorithm  on a graph with $n$ vertices is bounded by $n^{O(k)}$.   The case $k=1$ coincides with the na\"ive colour refinement, and $k=2$ with the classical Weisfeiler-Leman algorithm.

It follows from a result by Cai, F\"urer and Immerman \cite{cfi} that there is no fixed $k\in \mathbb{N}$ such that for all graphs the $k$-dimensional Weisfeiler-Leman algorithm outputs the partition of $k$-tuples of vertices into orbits of the induced action of the automorphism group of the input graph.  Indeed, the authors show how to construct a graph with $O(k)$ vertices for which $\WL_k$ fails to produce the partition into such orbits.  Thus, their result implies that a partition induced by this group action can be obtained for all  graphs on $n$ vertices only if one chooses $k$ to be $\Omega(n)$.  One can informally claim that the strength of the $k$-dimensional Weisfeiler-Leman algorithm increases with $k$. More precisely, for unlabelled partitions $\mathcal{P}$ and $\mathcal{Q}$ of some set $A$ we write $\mathcal{P}\preceq _A \mathcal{Q}$ and say  $\mathcal{Q}$ is a refinement of $\mathcal{P}$ if, whenever $a,b \in A$ are in the same equivalence class of $\mathcal{Q}$, they are also in the same equivalence class of $\mathcal{P}$. By viewing a labelled partition of $A$ as a function $\gamma: A \rightarrow L$ to a set of labels $L$ (which we sometimes refer to as the \emph{colour} set), the unlabelled partition induced by $\gamma$ is $\{\gamma^{-1}(l) \mid l \in L\}$.  We extend the partial order $\preceq_A$ to labelled paritions by writing $\gamma \preceq_A \rho$ to mean that the unlabelled parition induced by $\rho$ refines that induced by $\gamma$.  Note that this does not require that the co-domains of $\gamma$ and $\rho$ are the same.  We omit the subscript $A$ where the set is clear from the context. For a graph $\Gamma$, define $\overline{\WL}_1(\Gamma)=\Gamma$ and for $k\geq 2$ set $\overline{\WL}_k(\Gamma)$ to be the labelled partition of the set of arcs induced by the output of the $k$-dimensional Weisfeiler-Leman algorithm on input $\Gamma$.  We can now state the following:
$$
\overline{\WL}_1(\Gamma)\preceq \overline{\WL}_2(\Gamma)\preceq \hdots \preceq\overline{\WL}_n(\Gamma)=\overline{\WL}_{n+1}(\Gamma)=\hdots = \overline{\WL}_\infty(\Gamma)
$$
where $n$ is the number of vertices of $\Gamma$ and $\overline{\WL}_\infty(\Gamma)$ is the partition into the orbits of the induced action of the automorphism group on arcs. Also,
$$
\overline{\WL}_l(\overline{\WL}_k(\Gamma))=\overline{\WL}_k(\Gamma)
$$
for all $l,k\in \mathbb{N}$ with $l\leq k$.  This shows that the family of maps from the set of arc-coloured complete digraphs to itself $\{\overline{\WL}_1,\overline{\WL}_2,\hdots\}$ forms a \emph{Schurian polynomial approximation scheme} in the following sense, as defined in~\cite{pono}.
\begin{definition}[Schurian polynomial approximation scheme]\label{def:SPAS}
We say that a family of mappings $\{X_1,X_2,\hdots\}$ forms a \emph{Schurian Polynomial Approximation Scheme} (SPAS) if for any graph $\Gamma$ with vertex set $V$
\begin{enumerate}
\item $X_k(\Gamma)$ is a graph with vertex set $V$ for all $k\in \mathbb{N}$.
\item $X_1(\Gamma)\preceq X_2(\Gamma)\preceq \hdots \preceq X_{n}(\Gamma)=X_{n+1}(\Gamma)=\hdots=X_{\infty}(\Gamma)=Sch(\Gamma)$, where $n=|V|$ and $Sch(\Gamma)$ is the partition of arcs into orbits of the induced action of the automorphism group of $\Gamma$.
\item $X_l(X_m(\Gamma))=X_m(\Gamma)$ for all $l,m\in \mathbb{N}$ with $l\leq m$.
\item $X_k(\Gamma)$ is computable in time $n^{O(k)}$.
\end{enumerate}
\end{definition}
More informally, one can think of SPAS as a collection of polynomially computable functions indexed by $\mathbb{N}$, each of which takes as input a partition and refines it.  Moreover, the higher the index, the closer the output parition is to the orbit partition of the set of arcs. 
\begin{definition}[Dominance and equivalence of SPAS]\label{SPAS2}
For any two SPAS $\mathcal{S}_X=\{X_1,X_2,\hdots\}$ and $\mathcal{S}_Y=\{Y_1,Y_2,\hdots\}$:
\begin{enumerate}
\item we say $\mathcal{S}_X$ \emph{dominates} $\mathcal{S}_Y$ and write $\mathcal{S}_Y\preceq \mathcal{S}_X$ if for each $k\in\nats$ there is some $k'\in\nats$ such that $Y_{k}(\Gamma)\preceq X_{k'}(\Gamma)$ for all graphs $\Gamma$;
\item we say $\mathcal{S}_X$ is \emph{equivalent} to $\mathcal{S}_Y$ and write $\mathcal{S}_X\simeq \mathcal{S}_Y$ if $\mathcal{S}_X \preceq \mathcal{S}_Y$ and $\mathcal{S}_Y \preceq \mathcal{S}_X$; and 
\item we say $\mathcal{S}_X$ \emph{strictly dominates} $\mathcal{S}_Y$ if $\mathcal{S}_Y \preceq \mathcal{S}_X$ but $\mathcal{S}_X\not\preceq \mathcal{S}_Y$.
\end{enumerate}
\end{definition}

In this paper we deal with the following SPAS: $\mathcal{S}_{\mathbf{WL}}$, $\mathcal{S}_{\mathbf{C}}$, $\mathcal{S}_{\mathbf{C},r}$, $\mathcal{S}_{\mathbf{WL},r}$ and $\mathcal{S}_{\mathbf{IM}(\mathbb{F})}$. Such schemes arise from considering fixed points of \emph{refinement operators}. The formal definitions of \emph{refinement operators} is given in Section \ref{rop}.  These are operators that take a partition $\gamma$ of $V^k$ to a refinement of itself. The SPAS $\mathcal{S}_{\mathbf{WL}}$ and $\mathcal{S}_{\mathbf{C}}$ arise from well known concepts: Babai's generalization of the classical Weisfeiler-Leman algorithm for the former and first order logic with counting quantifiers for the latter.  In each case, the label of a tuple $\vec{v} \in V^k$ in the refined partition is determined by its label in $\gamma$ and the partition of $V$ that is induced by considering $\gamma(\vec{u})$ for the tuples $\vec{u}$ obtained by substituting elements of $V$ in $\vec{v}$.  
For $r\in \mathbb{N}$, $\mathcal{S}_{\mathbf{WL},r}$ and $\mathcal{S}_{\mathbf{C},r}$ are further generalizations of $\mathcal{S}_{\mathbf{WL}}$ and $\mathcal{S}_{\mathbf{C}}$ respectively.   In these generalzations, the label associated to each $k$-tuple $\vec{v}$ is determined by $\gamma(\vec{v})$ and the partition of $V^r$ obtained by considering  $\gamma(\vec{u})$ for tuples $\vec{u}$ obtained by substituting $r$-tuples in $\vec{v}$.  Formal definitions of these are given later.  Here we note that our first result is that the parameter $r$ does not strengthen the SPAS $\mathcal{S}_{\WL}$ and $\mathcal{S}_{\C}$.
\begin{theorem}\label{thm:main1}
For any $r\in \mathbb{N}$, $\mathcal{S}_\mathbf{WL}\simeq \mathcal{S}_\mathbf{C} \simeq \mathcal{S}_{\mathbf{WL},r}\simeq \mathcal{S}_{\mathbf{C},r}$.
\end{theorem}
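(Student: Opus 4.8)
The plan is to prove all four equivalences at once by exhibiting a single cycle of dominations
\[
\mathcal{S}_{\mathbf{WL}}\;\preceq\;\mathcal{S}_{\mathbf{WL},r}\;\preceq\;\mathcal{S}_{\mathbf{C},r}\;\preceq\;\mathcal{S}_{\mathbf{C}}\;\preceq\;\mathcal{S}_{\mathbf{WL}},
\]
since a cycle of $\preceq$'s forces all members to be mutually $\simeq$. Three of the four links will be essentially bookkeeping; the substantive assertion is the third one, namely that enlarging the substitution arity $r$ buys no additional distinguishing power.

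The two left-hand links should fall straight out of the definitions of the refinement operators in Section~\ref{rop}. A single-vertex substitution is the special case of an $r$-tuple substitution in which the substituted tuple is forced to agree with $\vec v$ off one coordinate, and the data recorded by the arity-$1$ operator can be recovered from the partition of $V^r$ recorded by the arity-$r$ operator by marginalising over the remaining coordinates; hence the arity-$r$ operator pointwise refines the arity-$1$ one, and since both are monotone their fixpoints are ordered accordingly, giving $\mathcal{S}_{\mathbf{WL}}\preceq\mathcal{S}_{\mathbf{WL},r}$ with witness $k'=k$. The link $\mathcal{S}_{\mathbf{WL},r}\preceq\mathcal{S}_{\mathbf{C},r}$ is of the same flavour, the counted partition refining the uncounted one at every step. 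For the right-hand link $\mathcal{S}_{\mathbf{C}}\preceq\mathcal{S}_{\mathbf{WL}}$ (which, together with the rest of the cycle, also yields $\mathcal{S}_{\mathbf{WL}}\preceq\mathcal{S}_{\mathbf{C}}$) I would invoke the classical correspondence between $k$-dimensional Weisfeiler--Leman and counting logic with $k+O(1)$ variables~\cite{cfi}: on the level of approximation schemes this is an additive-constant reindexing of the families, which is absorbed by $\simeq$.

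The heart of the argument is the remaining link $\mathcal{S}_{\mathbf{C},r}\preceq\mathcal{S}_{\mathbf{C}}$. Fix $k$ and $r$ and take $k'$ a suitable function of $k,r$ — I expect $k'=k$ if ``substituting an $r$-tuple'' keeps the ambient tuple in $V^{k}$, and otherwise $k'=kr$ or $k'=k+r$. The key lemma I would prove is: if $\gamma$ is a colouring of $V^{k'}$ that is stable under the arity-$1$ counting operator, then, after identifying $k$-tuples with $k'$-tuples by the appropriate blocking/padding map, the arity-$r$ counting operator at dimension $k$ fixes $\gamma$; equivalently, the counted partition of $V^{r}$ that the arity-$r$ operator inspects at a tuple $\vec v$ is already a function of $\gamma(\vec v)$. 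To see this I would decompose a substitution of $\vec w=(w_1,\dots,w_r)$ into a sequence of single-vertex substitutions $\vec v=\vec v^{(0)}\rightsquigarrow\vec v^{(1)}\rightsquigarrow\cdots\rightsquigarrow\vec v^{(r)}$; stability of $\gamma$ says exactly that, for each $i$, the counted partition of $V$ recording $\gamma$ of the one-step substitutions of $\vec v^{(i-1)}$ depends only on $\gamma(\vec v^{(i-1)})$, and composing these maps, a routine induction on $i$ shows the joint counted distribution of $(\gamma(\vec v^{(1)}),\dots,\gamma(\vec v^{(i)}))$ over uniform $(w_1,\dots,w_i)$ is determined by $\gamma(\vec v)$, whence so is the counted partition of $V^{r}$ classifying $\vec w$ by $\gamma(\vec v^{(r)})$. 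Since the arity-$1$ counting fixpoint at dimension $k'$ refines the initial colouring $\Gamma$ and is now seen to be a fixpoint of the arity-$r$ operator at dimension $k$, and the arity-$r$ fixpoint reached from $\Gamma$ is the coarsest such fixpoint (by monotonicity), the former refines the latter; projecting to arcs gives the domination. The non-counting case $\mathcal{S}_{\mathbf{WL},r}\preceq\mathcal{S}_{\mathbf{WL}}$ runs with only notational changes, ``counted distribution/partition'' being replaced by ``set of colour-values'' and the inductive step becoming the even simpler identity $\bigcup_{c\in A}B(c)$ for the appropriate set $A$ of colours and map $B$.

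The principal obstacle I anticipate is not conceptual but lies in matching the arity-$r$ operator to ordinary Weisfeiler--Leman precisely: one must pin down exactly how the operator selects the substituted coordinates (in particular how $k<r$ is handled) and presents the ``partition of $V^{r}$'' as an isomorphism-invariant label, and then choose $k'$ together with the blocking/padding map so that every intermediate tuple $\vec v^{(i)}$ of the decomposition lies in the domain of $\gamma$ and the stability hypothesis applies to the coordinate substituted at each step. A secondary subtlety is uniformity: the inductive claim must be proved about the colouring $\gamma$ itself rather than about a fixed input graph, so that the resulting $k'$ depends only on $k$ and $r$, as Definition~\ref{SPAS2} requires.
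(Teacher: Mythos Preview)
Your ``heart'' argument (the inductive reduction showing $\C_k$-stable $\Rightarrow$ $\C_{k,r}$-stable) is correct and is exactly the paper's Lemma~\ref{lm1}. The overall cycle, however, does not close, because you have misread the relationship between the $\WL$ and $\C$ operators.

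Both $\WL_{k,r}$ and $\C_{k,r}$ are \emph{counting} operators. The difference is that $\WL_{k,r}\circ\gamma(\vec v)$ records a \emph{multiset of tuples} --- the joint distribution over $\vec u\in V^r$ of the full family $(\gamma(\vec v\langle\vec i,\vec u\rangle))_{\vec i\in[k]^{(r)}}$ --- whereas $\C_{k,r}\circ\gamma(\vec v)$ records a \emph{tuple of multisets}, i.e.\ only the marginals for each $\vec i$ separately. Joint determines marginals, so the trivial implication is $\WL_{k,r}$-stable $\Rightarrow$ $\C_{k,r}$-stable (Lemma~\ref{pf1}), giving $\mathcal{S}_{\mathbf{C},r}\preceq\mathcal{S}_{\mathbf{WL},r}$. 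That is the \emph{reverse} of your link~2; your description ``the counted partition refining the uncounted one'' does not apply here. With link~2 reversed, the cycle collapses: you now have $\mathcal{S}_{\mathbf{WL}}\preceq\mathcal{S}_{\mathbf{WL},r}\succeq\mathcal{S}_{\mathbf{C},r}\preceq\mathcal{S}_{\mathbf{C}}\preceq\mathcal{S}_{\mathbf{WL}}$, which proves nothing.

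The converse $\mathcal{S}_{\mathbf{WL},r}\preceq\mathcal{S}_{\mathbf{C},r}$ is genuinely non-trivial; the paper obtains it (Lemma~\ref{pff1}, Corollary~\ref{cfiii}) by passing up to dimension $k+r$ and projecting back down, using graph-likeness to show that the $\WL_{k,r}$ ``joint'' information at level $k$ is captured by a single $\C_{k+r,r}$ colour at level $k+r$. Your link~1 is also not bookkeeping: from the multiset over $V^r$ that $\WL_{k,r}$ records you cannot simply ``marginalise'' to a multiset over $V$, because the multiset does not tell you which $\vec u$ have $u_2,\dots,u_r$ equal to prescribed entries of $\vec v$. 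In the paper the direction $\mathcal{S}_{\mathbf{C}}\preceq\mathcal{S}_{\mathbf{C},r}$ (and hence $\mathcal{S}_{\mathbf{WL}}\preceq\mathcal{S}_{\mathbf{WL},r}$) again needs a change of dimension (Corollary~\ref{pf3}). So two of your four links require the projection/lifting machinery you did not plan for; the theorem is not obtainable from a single cycle with three ``free'' arrows.
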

The reasons for considering the additional parameter $r$ is that it appears to be of interest in another scheme we consider.
The scheme $\mathcal{S}_{\mathbf{IM}(\mathbb{F})}$ arises from the \emph{invertible map game} introduced in~\cite{DawarHolm}.  It has been shown to have a close relationship to logics with linear algebraic operators~\cite{DawarGraedelPakusa} over a field $\mathbb{F}$.  The associated refinement operator $\IM_k^{\mathbb{F}}$ maps each $k$-tuple of vertices $\vec{v}$ and partition $\gamma$ to a tuple of matrices from, and the colour associated to $\vec{v}$ by the refinement of $\gamma$ is determined by the equivalence class of this tuple of matrices under simultaneous similarity.

For this SPAS we prove the following results.
\begin{theorem}\label{thm:main2}
For any field $\mathbb{F}$ of characteristic $0$, $\mathcal{S}_{\mathbf{IM}(\mathbb{F})}\simeq\mathcal{S}_{\mathbf{WL}}$.
\end{theorem}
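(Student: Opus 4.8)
The plan is to prove the two inclusions $\mathcal{S}_{\mathbf{WL}}\preceq\mathcal{S}_{\mathbf{IM}(\mathbb{F})}$ and $\mathcal{S}_{\mathbf{IM}(\mathbb{F})}\preceq\mathcal{S}_{\mathbf{WL}}$ separately. The first is routine and does not use the hypothesis on the characteristic: the $\IM^{\mathbb{F}}$ refinement operator is, at each dimension, at least as strong as Weisfeiler--Leman refinement, since among the invariants of a tuple of matrices under simultaneous similarity (its size, the traces of the matrices, the traces of their products, and so on) one already finds all of the colour counts used by the $\WL$ operator; hence the fixed point of $\IM^{\mathbb{F}}_{k}$ refines the $\WL_{k}$-colouring, and $\mathcal{S}_{\mathbf{WL}}\preceq\mathcal{S}_{\mathbf{IM}(\mathbb{F})}$ follows. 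All the content is in the reverse inclusion: for every $k$ we must exhibit a \emph{constant} $k'$ (independent of the number of vertices) with $\IM^{\mathbb{F}}_{k}(\Gamma)\preceq\overline{\WL}_{k'}(\Gamma)$ for all $\Gamma$. What makes this nontrivial is precisely that the matrices manipulated by $\IM^{\mathbb{F}}_k$ have size polynomial in $|V(\Gamma)|$, so a priori their similarity type could carry information that only $\WL_{k'}$ with $k'$ growing with $|V(\Gamma)|$ could recover.

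The key lemma is the following statement about coherent configurations, and it is here that $\mathrm{char}(\mathbb{F})=0$ enters essentially: if $\mathcal{C}_1,\mathcal{C}_2$ are coherent configurations on point sets of the same cardinality, with colour classes indexed by a common set $L$, and with the same structure constants (so that $l\mapsto l$ is an algebraic isomorphism $\mathcal{C}_1\to\mathcal{C}_2$), then the tuples of adjacency matrices $(A^{1}_l)_{l\in L}$ and $(A^{2}_l)_{l\in L}$ are simultaneously similar over $\mathbb{F}$. I would prove this as follows. The adjacency algebra of a coherent configuration is semisimple over any field of characteristic $0$ (over $\mathbb{C}$ this is the classical fact that it is closed under conjugate transpose; the general case follows because the configuration is defined over $\mathbb{Z}$, so the algebra is separable over $\mathbb{Q}$ and stays semisimple under any extension of scalars). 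The map $A^1_l\mapsto A^2_l$ extends to an isomorphism $\phi\colon\mathcal{A}_1\to\mathcal{A}_2$ of the two semisimple adjacency algebras, and simultaneous similarity of the tuples over $\mathbb{F}$ is exactly the assertion that the standard $\mathcal{A}_1$-module $\mathbb{F}^{X_1}$ and the pullback along $\phi$ of the standard $\mathcal{A}_2$-module $\mathbb{F}^{X_2}$ are isomorphic. Over a characteristic-$0$ field a module over a semisimple algebra is determined up to isomorphism by its character (the characters of the simple modules being linearly independent), and the character of $\mathbb{F}^{X_i}$ evaluated at the basis element $A^i_l$ equals $\mathrm{tr}(A^i_l)$, the number of loops of colour $l$ in $\mathcal{C}_i$, which is determined by the structure constants. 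Hence the two characters agree on a basis, the modules are isomorphic, and the tuples are simultaneously similar. (In positive characteristic the semisimplicity step fails, and indeed $\mathcal{S}_{\mathbf{IM}(\mathbb{F})}$ is strictly stronger there, so the hypothesis cannot be dropped.)

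It remains to derive the reverse inclusion from the lemma. Fix $k$ and set $k'=k+c$ for a constant $c$ chosen so that the $\WL_{k'}$-colour of a tuple $\vec v$ determines the coherent configuration $\mathcal{D}_{\vec v}$ induced by the $\WL_{k'}$-colouring on the relevant extensions of $\vec v$, together with all of its structure constants (a standard uniformity property of Weisfeiler--Leman). I would then argue by induction on the number of rounds of the $\IM^{\mathbb{F}}_k$ refinement process that every intermediate colouring of $V^k$ is refined by the $\WL_{k'}$-colouring. The base case is immediate. In the inductive step, the new colour of $\vec v$ is its old colour --- refined by the $\WL_{k'}$-colour by the induction hypothesis --- together with the simultaneous similarity type over $\mathbb{F}$ of the matrices built from the current colours of the extensions of $\vec v$. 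By the induction hypothesis these colour classes are unions of $\WL_{k'}$-colour classes, and which unions are taken is itself a function of the $\WL_{k'}$-colour of $\vec v$; so each of these matrices is a $\{0,1\}$-combination, determined by the $\WL_{k'}$-colour of $\vec v$, of the adjacency matrices of $\mathcal{D}_{\vec v}$. Applying the key lemma to $\mathcal{D}_{\vec v}$ and $\mathcal{D}_{\vec w}$ for two tuples $\vec v,\vec w$ with the same $\WL_{k'}$-colour yields a single invertible matrix conjugating the adjacency matrices of $\mathcal{D}_{\vec v}$ to those of $\mathcal{D}_{\vec w}$; the same matrix conjugates the corresponding $\{0,1\}$-combinations, that is, the $\IM^{\mathbb{F}}_k$-matrices of $\vec v$ to those of $\vec w$. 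Hence the new colour of $\vec v$ is again a function of its $\WL_{k'}$-colour, completing the induction; since the $\IM^{\mathbb{F}}_k$ process stabilises after finitely many rounds, $\IM^{\mathbb{F}}_k(\Gamma)\preceq\overline{\WL}_{k'}(\Gamma)$ for all $\Gamma$, which together with the first inclusion proves the theorem.

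The main obstacle is the key lemma --- more precisely the implication ``same structure constants $\Rightarrow$ simultaneously similar adjacency matrices'' --- which genuinely requires characteristic $0$ and rests on the semisimplicity of coherent algebras and on the separation of semisimple modules by their characters. The reduction of the reverse inclusion to this lemma, and the verification that the relevant structure constants are captured by $\WL$ of bounded dimension, are comparatively routine, though some care is needed to keep the shift $c$ independent of $|V(\Gamma)|$.
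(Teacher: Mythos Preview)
Your proposal is correct and rests on the same key fact as the paper: in characteristic~$0$ the adjacency algebra of a coherent configuration is semisimple, so an algebraic isomorphism between two coherent configurations (on sets of the same size) is realised by simultaneous conjugation of the adjacency matrices. The paper invokes this via the Skolem--N\"other theorem (Corollary~\ref{newbound} and the remark preceding it); your argument via characters of semisimple modules is an equally valid and perhaps more self-contained way to get the same conclusion.

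Where your write-up differs from the paper is in the reduction layer. The paper does not induct on refinement rounds. Instead it compares \emph{stable} partitions directly: Lemma~\ref{im2} shows that if $\gamma\in\mathcal{P}(V^{k+1})$ is graph-like and $\C_{k+1}$-stable, then for each $\vec v$ and each $\vec i\in[k]^{(2)}$ the colouring $(x,y)\mapsto\pr_k\gamma(\pr_k\vec v\langle\vec i,(x,y)\rangle)$ is already a coherent configuration, with intersection numbers determined by $\pr_k\gamma(\pr_k\vec v)$; the key lemma then yields $\IM_k^{\mathbb{F}}$-stability of $\pr_k\gamma$ in one stroke. Combined with $\overline{\C}_{k+1}\approx\overline{\WL}_k$ this gives $\overline{\WL}_k\succeq\overline{\IM}_k^{\mathbb{F}}$ with no induction and with $k'=k$. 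Your round-by-round induction reaches the same endpoint but is heavier machinery than needed, and the ``standard uniformity property'' you appeal to is exactly what Lemma~\ref{im2} makes precise.

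Two small inaccuracies worth tightening: your claim that $\IM_k^{\mathbb{F}}$ refines $\WL_k$ ``at each dimension'' via traces is not quite right as stated---the paper extracts the $\C_k$ counts from $\IM_{k+1}^{\mathbb{F}}$ (Lemma~\ref{im1}) by reading off ranks of diagonal $0$--$1$ matrices (which, unlike traces, works over any field), giving $\overline{\IM}_{k+2}^{\mathbb{F}}\succeq\overline{\WL}_k$ with a shift of~$2$. This is harmless for the SPAS comparison, but the shift is genuinely there.
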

\begin{theorem}\label{thm:main3}
For any field $\mathbb{F}$ of positive characteristic, $\mathcal{S}_{\mathbf{IM}(\mathbb{F})}$ strictly dominates $\mathcal{S}_\mathbf{WL}$.
\end{theorem}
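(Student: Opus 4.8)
The plan is to establish the two halves of strict domination separately: $\mathcal{S}_{\mathbf{WL}}\preceq\mathcal{S}_{\mathbf{IM}(\mathbb{F})}$ and $\mathcal{S}_{\mathbf{IM}(\mathbb{F})}\not\preceq\mathcal{S}_{\mathbf{WL}}$. The first inclusion holds over \emph{every} field and is obtained exactly as the corresponding half of Theorem~\ref{thm:main2}: the tuple of matrices that the operator $\IM^{\mathbb{F}}_k$ attaches to a $k$-tuple $\vec v$ retains, in particular, the colour-class indicator matrices of the current partition and the diagonal matrices recording the colour of each single substitution into $\vec v$, so the simultaneous-similarity class of this tuple already determines the colour that one round of Weisfeiler--Leman refinement at dimension $k$ would give $\vec v$. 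Iterating both operators to their fixed points then yields $\overline{\WL}_k(\Gamma)\preceq\overline{\IM}^{\mathbb{F}}_k(\Gamma)$ for all $\Gamma$, hence $\mathcal{S}_{\mathbf{WL}}\preceq\mathcal{S}_{\mathbf{IM}(\mathbb{F})}$. The characteristic hypothesis of Theorem~\ref{thm:main2} is used only for the reverse inclusion, which is exactly what fails in positive characteristic.

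For the separation, put $p=\mathrm{char}(\mathbb{F})$, a prime; we use a Cai--F\"urer--Immerman construction over the group $\mathbb{Z}_p=\mathbb{F}_p$. For a connected base graph $X$ this produces a pair of non-isomorphic graphs $G_X=\mathrm{CFI}_p(X)$ and its "twisted" variant $\widetilde{G}_X$, each on $O(|X|)$ vertices, where for $p=2$ one takes the original construction of~\cite{cfi} and for odd $p$ its routine $\mathbb{Z}_p$-analogue. Two facts are needed. \textbf{(i)} If $X$ has treewidth greater than $m$, then $\overline{\WL}_m$ does not distinguish $G_X$ from $\widetilde{G}_X$; this is the CFI lower bound together with its $\mathbb{Z}_p$-version, and since $X$ may be chosen of arbitrarily large treewidth it applies to every fixed $m$. \textbf{(ii)} There is a \emph{constant} $k$, independent of $X$, for which $\IM^{\mathbb{F}}_k$ \emph{does} distinguish $G_X$ from $\widetilde{G}_X$ (i.e.\ assigns the two graphs non-isomorphic refined colourings): the difference between $G_X$ and $\widetilde{G}_X$ is a "twist" detectable by solving a system of linear equations over $\mathbb{F}_p$, and the invertible-map refinement over a field of characteristic $p$ is sensitive to exactly such linear-algebraic features while using only a bounded number of coordinates. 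This last fact is essentially the content of~\cite{DawarHolm}; it is also visible through the correspondence with linear-algebraic logic over $\mathbb{F}$ of~\cite{DawarGraedelPakusa}, and can be extracted directly from the definition of $\IM^{\mathbb{F}}_k$ by checking that the simultaneous-similarity class of the matrix tuple associated with a tuple of vertices records the gadget's twist.

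Given (i) and (ii), the theorem follows from the standard reformulation of SPAS dominance in terms of distinguishing power on pairs of graphs (realised, as usual, via disjoint unions): $\mathcal{S}_{\mathbf{IM}(\mathbb{F})}\preceq\mathcal{S}_{\mathbf{WL}}$ would require, for the constant $k$ of (ii), some $k'$ such that every pair of graphs distinguished by $\IM^{\mathbb{F}}_k$ is also distinguished by $\overline{\WL}_{k'}$; but taking $X$ of treewidth greater than $k'$, the pair $(G_X,\widetilde{G}_X)$ is distinguished by $\IM^{\mathbb{F}}_k$ and not by $\overline{\WL}_{k'}$, so no such $k'$ exists and $\mathcal{S}_{\mathbf{IM}(\mathbb{F})}\not\preceq\mathcal{S}_{\mathbf{WL}}$. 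Together with the first paragraph this is strict domination. The main obstacle is fact (ii) for odd $p$: it requires both a correct set-up of the $\mathbb{Z}_p$-CFI gadgets and a hands-on verification that a \emph{bounded} dimension of the invertible-map refinement over $\mathbb{F}$ already separates the twisted and untwisted graphs — that is, that the similarity invariants of the attached matrix tuple faithfully encode the relevant $\mathbb{F}_p$-linear system and are not washed out as $|X|\to\infty$. Fact (i) and the disjoint-union reformulation are routine adaptations of well-known arguments.
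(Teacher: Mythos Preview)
Your proposal is essentially correct and follows the same route as the paper: the domination half is field-independent and comes from the same lemma used for Theorem~\ref{thm:main2}, while the separation uses the $\mathbb{Z}_p$-variant of the CFI construction (this is precisely Holm's construction in~\cite{bjarki}, which the paper cites for this step).

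Two small points of calibration. First, your sketch for the domination half overstates what the diagonal information gives: the diagonal of $\chi^{\gamma,\vec v}_{\vec i,\sigma}$ records the colour after substituting the \emph{same} element at \emph{two} positions $i_1,i_2$, not a single substitution. The paper's argument (Lemma~\ref{im1}) therefore uses a fresh coordinate, so what one actually obtains is $\overline{\WL}_k(\Gamma)\preceq\overline{\IM}^{\mathbb{F}}_{k+2}(\Gamma)$ rather than the same-$k$ inequality you claim. This is harmless for SPAS domination, but the specific sentence about ``diagonal matrices recording the colour of each single substitution'' is not quite right as written. Second, the paper phrases the separation directly in terms of a single graph $\Gamma_{k,p}$ on which $\overline{\WL}_k$ is strictly coarser than $\Sch(\Gamma_{k,p})$ while $\overline{\IM}^{\mathbb{F}}_3$ already reaches $\Sch(\Gamma_{k,p})$; your disjoint-union/pair-distinguishing reformulation is equivalent and standard, so there is no real difference here.
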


The paper is structured as follows: after a brief overview of the required notions on coherent configurations and algebras, we formally define and discuss the concepts of refinement operators and procedures. We then prove Theorem $\ref{thm:main1}$ and use a similar method to show the equivalence between $\mathcal{S}_\mathbf{WL}$ and $\mathcal{S}_\EP$, a SPAS introduced by Evdokimov and Ponomarenko in \cite{pono}.
The final two sections contain the proofs of Theorems $\ref{thm:main2}$ and $\ref{thm:main3}$ and a short discussion on a variant of the SPAS $\mathcal{S}_{\mathbf{IM}(\mathbb{F})}$, namely $\mathcal{S}_{\mathbf{IMt}(\mathbb{F})}$.  This variant is motivated by looking at the difference between the definitions of $\mathcal{S}_\mathbf{WL}$ and $\mathcal{S}_{\mathbf{C}}$, and applying a similar variation to the definition of $\mathcal{S}_{\mathbf{IM}(\mathbb{F})}$.  The discussion leads to a proof of the following.
\begin{theorem}\label{thm:main4}
For any field $\mathbb{F}$, $\mathcal{S}_{\mathbf{IM}(\mathbb{F})}\simeq\mathcal{S}_{\mathbf{IMt}(\mathbb{F})}$.
\end{theorem}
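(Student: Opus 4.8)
The plan is to establish the two dominances $\mathcal{S}_{\mathbf{IMt}(\mathbb{F})}\preceq\mathcal{S}_{\mathbf{IM}(\mathbb{F})}$ and $\mathcal{S}_{\mathbf{IM}(\mathbb{F})}\preceq\mathcal{S}_{\mathbf{IMt}(\mathbb{F})}$ separately. The first should fall straight out of the definitions: since $\IMt$ differs from $\IM$ in exactly the way $\mathcal{S}_{\mathbf{C}}$ differs from $\mathcal{S}_{\mathbf{WL}}$ — it records, for the tuple of matrices built from $\vec v$ and $\gamma$, only the sequence of individual similarity classes rather than the class under simultaneous conjugation — the colour $\IMt_k^{\mathbb{F}}(\gamma)$ assigns to $\vec v$ is a function of the colour $\IM_k^{\mathbb{F}}(\gamma)$ assigns to it, so $\IMt_k^{\mathbb{F}}(\gamma)\preceq\IM_k^{\mathbb{F}}(\gamma)$ for every colouring $\gamma$ and every $k$. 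Passing from this pointwise comparison of refinement operators to the corresponding comparison of SPAS (using monotonicity of the operators and induction on the number of refinement steps) is then exactly the bookkeeping carried out for the $\mathcal{S}_{\mathbf{WL}}$-versus-$\mathcal{S}_{\mathbf{C}}$ comparison inside the proof of Theorem~\ref{thm:main1}.

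For the non-trivial inclusion $\mathcal{S}_{\mathbf{IM}(\mathbb{F})}\preceq\mathcal{S}_{\mathbf{IMt}(\mathbb{F})}$ the idea is that a single application of $\IM_k^{\mathbb{F}}$ can be simulated by $\IMt^{\mathbb{F}}$ once one is allowed to substitute short tuples of vertices instead of single vertices. I would (i) introduce the auxiliary scheme $\mathcal{S}_{\mathbf{IMt}(\mathbb{F}),r}$, in which the refined colour of a $k$-tuple is obtained by substituting $r$-tuples of vertices; (ii) show, by replaying the argument of Theorem~\ref{thm:main1} essentially verbatim, that $\mathcal{S}_{\mathbf{IMt}(\mathbb{F}),r}\simeq\mathcal{S}_{\mathbf{IMt}(\mathbb{F})}$ for every $r$ — this being the reason the parameter $r$ was introduced in the first place; and (iii) prove that for a suitable $r$ one has $\IM_k^{\mathbb{F}}(\gamma)\preceq\IMt_k^{\mathbb{F},r}(\gamma)$, so that the simultaneous-similarity class of the $\IM$-tuple is recoverable from the data collected by $\IMt$ with $r$-substitutions. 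Chaining (i)--(iii) gives $\mathcal{S}_{\mathbf{IM}(\mathbb{F})}\preceq\mathcal{S}_{\mathbf{IMt}(\mathbb{F}),r}\simeq\mathcal{S}_{\mathbf{IMt}(\mathbb{F})}$. The heart of step~(iii) is that substituting an $r$-tuple exposes, as genuine colour data of the longer configuration, the combinations — in particular the products and transposes — of the matrices that $\IM_k^{\mathbb{F}}$ inspects; once the colouring has stabilised these combinations appear among the matrices whose individual similarity classes $\IMt$ records, and the collection of all of them should be enough to pin the original tuple down up to simultaneous conjugation.

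The step I expect to be the main obstacle is precisely this last point: arguing that the term-wise similarity invariants of enough matrices in the stabilised $\IMt^{\mathbb{F},r}$-colouring determine the simultaneous-similarity class recorded by $\IM_k^{\mathbb{F}}$. Unlike the $\mathcal{S}_{\mathbf{WL}}$/$\mathcal{S}_{\mathbf{C}}$ case, where the relevant invariant is a multiset and ``joint determines marginal'' is trivial, here the colour is an orbit under simultaneous conjugation by $\mathrm{GL}_{n}(\mathbb{F})$, and over an arbitrary field the passage from term-wise similarity of words in the matrices to simultaneous similarity of the generators is delicate — the clean trace-based invariant theory available in characteristic $0$ breaks down in positive characteristic, so one cannot simply quote it. I would handle this by working at the level of the associated coherent, respectively cellular, algebra: the matrices produced by $\IM_k^{\mathbb{F}}$ from $\vec v$ span a subalgebra of $\Mat_n(\mathbb{F})$ whose structure constants depend only on the (already stabilised) combinatorial colouring, so the simultaneous-similarity class is nothing but the isomorphism type of $\mathbb{F}^{\,n}$ as a module over this fixed algebra; one then shows that the stabilised $\IMt^{\mathbb{F},r}$-colouring records enough of the action of the algebra's basis elements and their products to recover that module type, taking care in positive characteristic to use only invariants (rational canonical forms, and coefficients of characteristic polynomials of words) that remain meaningful over every field. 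Because this argument is combinatorial and module-theoretic rather than relying on any characteristic-$0$ phenomenon, it applies uniformly to all $\mathbb{F}$, which is exactly the scope of the theorem.
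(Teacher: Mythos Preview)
Your proposal rests on a misreading of the definition of $\IMt_k^{\mathbb F}$. You describe $\IMt$ as recording ``only the sequence of individual similarity classes rather than the class under simultaneous conjugation''. That is not what the paper defines. In both operators the equivalence $\sim$ is simultaneous similarity; the difference is in \emph{indexing} and in the \emph{quantifier structure}. For $\IM_k^{\mathbb F}$ the matrices are $\chi^{\gamma,\vec v}_{\vec i,\sigma}$ indexed by pairs $(\vec i,\sigma)\in[k]^{(2)}\times\im(\gamma)$, and the stability condition asks, \emph{for each $\vec i$ separately}, for an $S$ that conjugates the family $(\chi^{\gamma,\vec u}_{\vec i,\sigma})_{\sigma}$ to $(\chi^{\gamma,\vec v}_{\vec i,\sigma})_{\sigma}$. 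For $\IMt_k^{\mathbb F}$ the matrices are $\chi^{\gamma,\vec v}_{\vec\phi}$ indexed by $\vec\phi\in\Phi^{\gamma,2}=\im(\gamma)^{[k]^{(2)}}$, and the stability condition asks for a \emph{single} $S$ conjugating the entire family. So $\IMt$ is the analogue of $\WL$ (stronger, ``joint'' invariant), and $\IM$ is the analogue of $\C$ (weaker, ``marginal per $\vec i$''), which is the opposite of what you assumed.

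Once the roles are straightened out, the whole module-theoretic programme you outline for the ``hard'' direction evaporates. The direction $\overline{\IM}^{\mathbb F}_k(\Gamma)\preceq\overline{\IMt}^{\mathbb F}_k(\Gamma)$ is the easy one: since $\chi^{\gamma,\vec v}_{\sigma,\vec i}=\sum_{\{\vec\phi\,:\,\phi_{\vec i}=\sigma\}}\chi^{\gamma,\vec v}_{\vec\phi}$, a single $S$ witnessing $\IMt$-equivalence also witnesses $\IM$-equivalence for every $\vec i$; this is pure linear algebra with no field hypothesis. The other direction, $\overline{\IMt}^{\mathbb F}_k(\Gamma)\preceq\overline{\IM}^{\mathbb F}_{k+2}(\Gamma)$, is handled exactly as Lemma~\ref{pff1} handles $\WL$ versus $\C$: pass to $k+2$ coordinates, use the two extra slots $\vec j=(k+1,k+2)$ to store the substituted pair, and observe (via Lemma~\ref{veryuseful}) that every $\chi^{\overline\gamma,\pr_k\vec v}_{\vec\phi}$ is a sum $\sum_{\{\sigma:\Delta_\sigma=\vec\phi\}}\chi^{\gamma,\vec v}_{\sigma,\vec j}$ over the \emph{single} index $\vec j$; then the one $S$ that $\IM_{k+2}^{\mathbb F}$-stability provides for $\vec j$ already conjugates all the $\chi_{\vec\phi}$ simultaneously. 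No auxiliary scheme $\mathcal S_{\mathbf{IMt}(\mathbb F),r}$, no invariant theory of simultaneous conjugation, and no characteristic-dependent argument is needed.
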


Throughout the text, all sets are finite. Given two sets $V$ and $I$, a tuple in $V^I$ is denoted by $\vec{v}$, and its $i^{th}$ entry by $v_i$, for each $i\in I$. We use the notation $(v_i)_{i\in I}$ to denote the element of $V^I$ with $i^{th}$ element equal to $v_i$. We set $[k]=\{1,2,\hdots,k\}\subset\mathbb{N}$ and $[k]^{(r)}=\{\vec{x}\in [k]^r \mid x_i\neq x_j\;\forall i,j\in [r],\;i\neq j\}$.  Recall that a labelled partition of a set $A$ is a function $\gamma:A\rightarrow \mathrm{Im}(\gamma)$.  The class of all labelled partitions of $A$ is denoted by $\mathcal{P}(A)$.  Recall also that  for $\gamma,\rho\in \mathcal{P}(A)$, $\gamma\preceq \rho$ denotes that the unlaballed partition induced by $\rho$ is a refinement of that induced by $\gamma$. If $\gamma\preceq \rho$ and $\gamma\succeq \rho$ are both satisfied, we write $\gamma \approx \rho$. Note that $\gamma\approx \rho$ does not imply $\gamma=\rho$, as they may have different codomains. The equivalence class of $a\in A$ with respect to the partition $\gamma$ is denoted by $[a]_\gamma$. Fix some set $V$ and $k,r\in \mathbb{N}$ with $r\leq k$. For any $\vec{v}\in V^k$, $\vec{u}\in V^r$, and $\vec{i}\in [k]^{(r)}$ we define $\vec{v}\langle \vec{i},\vec{u}\rangle\in V^k$ to be the tuple with entries
$$
(\vec{v}\langle \vec{i},\vec{u}\rangle)_j=\begin{cases}
u_{i_s}\;\,\text{if $j=i_s$ for some $s\in [r]$}\\
v_{j}\;\,\text{otherwise.}\\
\end{cases}
$$
Given two tuples $\vec{v}\in V^r$ and $\vec{w}\in V^s$ their \emph{concatenation} is denoted by $\vec{v}\cdot \vec{w}\in V^{r+s}$. More precisely, $\vec{v}\cdot\vec{w}$ is the tuple with entries
$$
(\vec{v}\cdot\vec{w})_i=\begin{cases}
v_i \;\,& \textrm{if $i\in [r]$}\\
w_j\;\,& \textrm{if $i=j+r$}.
\end{cases}
$$
For a relation $R\subseteq V^2$ we define the adjacency matrix of $R$ to be the $V\times V$ matrix whose $(u,v)$ entry is $1$ if $(u,v)\in R$ and $0$ otherwise. The set of multisets of elements of $V$ is denoted by $\mult(V)$, and the multiset of entries of a tuple $\vec{v}\in V^I$ is denoted by $\{\{v_i \mid i\in I\}\}$. For all $\gamma\in\mathcal{P}(V^k)$ and natural numbers $r\leq k$, set $\Phi^{\gamma,r}=\im(\gamma)^{[k]^{(r)}}$.

\section{Coherent configurations and coherent algebras}\label{cohs}
This section introduces notions on coherent configurations and algebras necessary throughout the paper. For a more in-depth account see \cite{pono2} or \cite{cameron2}.  Our formulation is, in general, different from the more traditional treatment, as we deal with labelled partitions and extend the notion of coherent algebras to arbitrary fields. Also note that rainbows and coherent configurations have been originally defined for \emph{unlabelled partitions}.  Thus, Definitions \ref{cohrain} and \ref{cohconf} define, strictly speaking, a \emph{labelled} rainbow and a \emph{labelled} coherent configuration respectively.

\begin{definition}[Rainbow]\label{cohrain}
A labelled partition $\rho$ of $V^2$ is said to be a \emph{rainbow} on $V$ if
\begin{enumerate}
\item There is a set $\mathcal{I}\subseteq \im(\rho)$ such that
\begin{equation}\label{rain1}
\bigcup_{\sigma\in \mathcal{I}} \{\vec{x}\in V^2 \mid \rho(\vec{x})=\sigma\}=\{(v,v)\in V^2 \mid v\in V\}.
\end{equation}
\item For all $(u,v),(u',v')\in V^2$, $\rho (u,v)=\rho(u',v')\iff\rho(v,u)=\rho(v',u').$
\end{enumerate}
\end{definition}

We set $\text{Cel}(\rho)=\{ U\subset V \mid \exists \sigma\in \im(\rho),\rho(u,u)=\sigma\,\,\forall u\in U\}$ and call its elements the \emph{cells} of $\rho$.

It was stated in the introduction that in this paper, graphs are viewed as partitions of the set of their arcs, hence as arc-coloured complete di-graphs.  For example, an uncoloured loop-free undirected graph can be seen as a complete di-graph with its arcs partitioned into three colour classes: edges, non-edges and loops.  Hence we can always see a graph as a rainbow in the above sense.  This view is natural, since our interest is in the partition into orbits of the induced action of the automorphism group on arcs, and this partition is, necessarily, a rainbow.  Furthermore, given any group action on $V$, the partition into the orbits of the induced action on $V^2$ forms a \emph{coherent configuration} \cite{cameron2}\footnote{Though note that although all group actions give rise to coherent configurations, not all coherent configurations arise from group actions (see \cite{cameron2}).}.

\begin{definition}[Coherent configuration]\label{cohconf}
  A rainbow $\rho$ on $V$ is said to be a \emph{coherent configuration} on $V$ if for each $\sigma,\tau,\kappa \in \im(\rho)$, there is a constant $p_{\sigma\tau}^\kappa$ such that for any $(u,v)\in V^2$ with $\rho(u,v)=\kappa$:
$$
|\{x\in V \mid \rho(u,x)=\sigma,\rho(x,v) = \tau\}| = p_{\sigma\tau}^\kappa.
$$
\end{definition}

Observe that if $X$ is a union of cells of a coherent configuration, the restriction $\rho |_{X^2}$ is a coherent configuration on $X$.

The constants $p^\kappa_{\sigma\tau}$ are called the \emph{intersection numbers} of $\rho$ and may be interpreted algebraically as follows. For every $\sigma\in \im(\rho)$, let $A_\sigma$ be the adjacency matrix of the relation $\rho^{-1}(\sigma)$. Then for all $\sigma,\tau\in \im(\rho)$
$$
A_\sigma A_\tau= \sum_{\kappa\in\im(\rho)}p^\kappa_{\sigma\tau}A_{\kappa}.
$$
Thus, taking $p^\kappa_{\sigma\tau}$ as rational numbers in a field  $\mathbb{F}$ of characteristic zero, we see that the $\mathbb{F}$-span of the set $\mathcal{A}_\rho=\{A_\sigma \mid \sigma\in\im(\rho)\}$ is an $\mathbb{F}$-algebra.  The same is true if we take $\mathbb{F}$ to be a field of characteristic $q$ and consider the constants $p^\kappa_{\sigma\tau}$ modulo $q$.  We refer to this algebra as the $\mathbb{F}$-\emph{adjacency algebra of} $\rho$ and denote it by $\mathbb{F}\mathcal{A}_\rho$. Such an algebra is a \emph{coherent algbera} in the following sense.

\begin{definition}[Coherent algebra]\label{coh2}
A subalgebra of $\text{Mat}_V(\mathbb{F})$ is said to be a \emph{coherent algebra} on $V$ if it is a unital algebra with respect to matrix multiplication and Schur-Hadamard (component-wise) multiplication, and it is closed under transposition.
\end{definition}

We indicate the Schur-Hadamard multiplication by $\star$ \footnote{In the literature, the Schur-Hadamard multiplication is often denoted by $\circ$.  However, we reserve the latter for function composition.}. At this point, it needs to be pointed out that in most literature, when $\mathbb{F}=\mathbb{C}$, closure under transposition is usually replaced by \emph{closure under Hermitian conjugation} for the definition of a coherent algebra. However, we show in Proposition \ref{coh1}, that an algebra satisfies Definition \ref{coh2} if, and only if, it has a basis of $0$-$1$-matrices satisfying the \emph{coherence conditions} (Definition \ref{coh3}). In Section 2.3 of \cite{pono2} it is shown that an algebra over $\mathbb{C}$ satisfies Definition \ref{coh2} with closure under transposition replaced by closure under Hermitian conjugation if, and only if, it has a basis of $0$-$1$-matrices satisfying the coherence conditions. Hence, over $\mathbb{C}$, Definition \ref{coh2} is equivalent to the original one by D. Higman in ~\cite{higman}, but has the advantage that it can be extended to any field.

It is clear from the definition, that for any coherent configuration $\rho$, the $\mathbb{F}$-adjacency algebra $\mathbb{F}\mathcal{A}_\rho$ is a coherent algebra for any field $\mathbb{F}$. Indeed, the set $\mathcal{A}_\rho$ is the unique basis of $0$-$1$-matrices for $\mathbb{F}\mathcal{A}_\rho$ satisfying the \emph{coherence} conditions.
\begin{definition}\label{coh3}
A set of $0$-$1$-matrices $\mathcal{M}$ is said to satisfy the \emph{coherence conditions} if 
\begin{enumerate}
\item $\sum_{A\in \mathcal{M}}A=\mathbb{J}$, where $\mathbb{J}$ is the all $1$s matrix.
\item For some $\mathcal{I}\subseteq\mathcal{M}$, $\sum_{A\in \mathcal{I}}A=\mathbb{I}$, where $\mathbb{I}$ is the identity matrix.
\item $A^t\in \mathcal{M}$ for all $A\in \mathcal{M}$.
\end{enumerate}
\end{definition}
We now show that any coherent algebra over any field has a unique basis of $0$-$1$-matrices satisfying the coherence conditions. We refer to this basis as the \emph{standard basis} of a coherent algebra. The argument that follows is analogous to that used to prove Theorem 2.3.7 in \cite{pono2}.

Let $W$ be a coherent algebra over $V$. As explained in Section 2.3 in \cite{pono2}, one may write
\begin{equation}\label{coh4}
\mathbb{J}=\sum_{i\in[r]} E_i
\end{equation}
where $\{E_i \mid i\in[r]\}$ is the full set of primitive idempotents of $W$ with respect to the Schur-Hadamard product. In order to be an idempotent $E_i$ must be a $0$-$1$ matrix, and hence the adjacency matrix of some relation $R_i\subseteq V^2$. Since for $i\neq j$, $E_i$ and $E_j$ are orthogonal, for all $u,v\in V$, $(E_i)_{uv}=1\implies(E_j)_{uv}=0$. Thus, from formula (\ref{coh4}), it follows that $\{R_i \mid i\in[r]\}$ forms a partition of $V^2$.
\begin{lemma}\label{condcoh}
$\{E_i \mid i\in[r]\}$ as above, satisfies the coherence conditions.
\end{lemma}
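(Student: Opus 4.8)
The plan is to verify the three coherence conditions of Definition~\ref{coh3} for the set $\mathcal{M}=\{E_i \mid i\in[r]\}$ one at a time. Condition~(1) is exactly equation~(\ref{coh4}), which was already derived above, so the content of the proof lies in conditions~(2) and~(3).

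For condition~(3) I would use closure of $W$ under transposition together with the identity $(A\star B)^t = A^t\star B^t$. These give that $E_i^t\in W$ is again a $0$-$1$ matrix with $E_i^t\star E_i^t = E_i^t$, i.e.\ a Schur idempotent; it is moreover \emph{primitive}, since a decomposition $E_i^t = F+G$ into nonzero orthogonal Schur idempotents of $W$ would transpose to a decomposition $E_i = F^t+G^t$ of the same kind, contradicting primitivity of $E_i$. Since $\{E_j \mid j\in[r]\}$ is by definition the complete list of primitive Schur idempotents of $W$, it follows that $E_i^t = E_j$ for some $j$, which is condition~(3).

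For condition~(2) I would exploit that a coherent algebra is unital with respect to ordinary matrix multiplication, so $\mathbb{I}\in W$. Schur-multiplying~(\ref{coh4}) by $\mathbb{I}$ yields $\mathbb{I} = \sum_{i\in[r]}(\mathbb{I}\star E_i)$, where each $\mathbb{I}\star E_i$ lies in $W$ (closure under $\star$), is a $0$-$1$ matrix, hence a Schur idempotent, and has support contained in that of $E_i$. Then $E_i = (\mathbb{I}\star E_i) + ((\mathbb{J}-\mathbb{I})\star E_i)$ writes $E_i$ as a sum of two orthogonal Schur idempotents of $W$, so primitivity forces one summand to vanish; hence $\mathbb{I}\star E_i\in\{0,E_i\}$ for each $i$, and with $\mathcal{I} = \{i\in[r] \mid \mathbb{I}\star E_i = E_i\}$ we obtain $\mathbb{I} = \sum_{i\in\mathcal{I}}E_i$, which is condition~(2).

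There is no real obstacle here — the proof is a short chain of verifications — and the only step requiring a moment's thought is condition~(2), where one must first locate $\mathbb{I}$ inside $W$ via unitality and then split $E_i$ against $\mathbb{I}$ and $\mathbb{J}-\mathbb{I}$ in order to invoke primitivity. The same splitting trick applied to an arbitrary $0$-$1$ matrix $M\in W$, via $M = \sum_i (M\star E_i)$, shows more generally that every $0$-$1$ matrix of $W$ is the sum of those $E_i$ whose support it contains, which is why $\{E_i \mid i\in[r]\}$ merits being called the standard basis.
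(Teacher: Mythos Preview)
Your proof is correct and follows essentially the same approach as the paper's, just with more detail filled in. The paper argues condition~(2) by noting that $\mathbb{I}\in W$ is a Schur idempotent and hence a sum of primitive Schur idempotents, and condition~(3) by noting that closure under transposition forces $E_i^t$ to again be a primitive Schur idempotent; your argument makes both of these steps explicit (for (2) via the splitting $E_i = (\mathbb{I}\star E_i) + ((\mathbb{J}-\mathbb{I})\star E_i)$, for (3) by transposing a hypothetical decomposition), but the underlying ideas are identical.
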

\begin{proof}
Condition $(1)$ of Definition \ref{coh3} is satisfied because of formula (\ref{coh4}). Because $\mathbb{I}\in W$ is an idempotent, it can be written as a sum of primitive idempotents. Thus, $\{E_i \mid i\in[r]\}$ satisfies $(2)$ in Definition \ref{coh3}. Finally, $E_i^t$ is also a primitive idempotent, since $W$ is closed under transposition.
\end{proof}
\begin{proposition}\label{coh1}
For any field $\mathbb{F}$, a coherent algebra on $V$ over $\mathbb{F}$ has a unique basis of $0$-$1$-matrices satisfying the coherence conditions.
\end{proposition}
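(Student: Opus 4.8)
The plan is to establish existence and uniqueness separately, using the primitive idempotents of the Schur--Hadamard product as the backbone of the argument. For existence, I would take the set $\{E_i \mid i \in [r]\}$ of primitive Schur--Hadamard idempotents of the coherent algebra $W$, which by Lemma~\ref{condcoh} already satisfies the coherence conditions; the only thing left to check is that these matrices actually form a \emph{basis} of $W$ (as opposed to merely lying in $W$). The fact that they are linearly independent is immediate from the observation made just before Lemma~\ref{condcoh} that their supports $\{R_i \mid i \in [r]\}$ partition $V^2$, so no nontrivial linear combination can vanish. The substantive point is that they \emph{span} $W$: I would argue that since $W$ is closed under $\star$ and every element of $W$ is, Schur-Hadamard-wise, ``supported on a union of the $R_i$'' in a way compatible with the idempotent decomposition, any $M \in W$ can be written as $M = \sum_i (M \star E_i)$, and then show each $M \star E_i$ is a scalar multiple of $E_i$ — this uses primitivity of $E_i$ together with closure of $W$ under $\star$, roughly because $M \star E_i$ is an element of the (commutative) Schur--Hadamard subalgebra on which $E_i$ is a minimal idempotent.

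For uniqueness, suppose $\mathcal{M}$ is any basis of $0$--$1$-matrices of $W$ satisfying the coherence conditions. From condition~(1), $\sum_{A \in \mathcal{M}} A = \mathbb{J}$, and since the $A$ are $0$--$1$-matrices this forces their supports to partition $V^2$ (any overlap would make some entry of the sum at least $2$, and in characteristic $q$ this still fails because the count of matrices covering a given entry must be exactly $1$ for the sum to equal $\mathbb{J}$ entrywise — I should be slightly careful here and phrase it as: each entry of $\mathbb{J}$ is covered, and linear independence over $\mathbb{F}$ of the $A$'s rules out the degenerate coincidences). Thus $\mathcal{M}$ determines, and is determined by, a partition of $V^2$. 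Each $A \in \mathcal{M}$ is then a Schur--Hadamard idempotent of $W$ ($A \star A = A$ since $A$ is $0$--$1$), and I would show it is \emph{primitive}: if $A = B + B'$ with $B, B'$ nonzero orthogonal Schur--Hadamard idempotents of $W$, then refining $\mathcal{M}$ by replacing $A$ with $B, B'$ would still satisfy conditions (1) and (3) and yield more than $\dim W$ linearly independent $0$--$1$-matrices in $W$ — or more directly, $B$ would be an element of $W$ not expressible in the basis $\mathcal{M}$ with $0$--$1$ coordinates, but this needs the earlier claim that $\{E_i\}$ is a basis, which is why I would have the existence part in hand first. Hence $\mathcal{M} = \{E_i \mid i \in [r]\}$, the canonical set of primitive Schur--Hadamard idempotents, which is uniquely determined by $W$.

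The main obstacle I anticipate is the span argument in the existence half, specifically showing $M \star E_i = \lambda_i E_i$ for scalars $\lambda_i$. Over $\mathbb{C}$ this is classical (it is essentially the statement that the Schur--Hadamard algebra is a direct sum of $1$-dimensional blocks indexed by the cells of the partition), but over an arbitrary field $\mathbb{F}$ — especially positive characteristic — one must argue that the commutative subalgebra $(W, \star)$ is still split by its primitive idempotents into one-dimensional pieces. This should follow because $(W,\star)$ is a commutative algebra all of whose elements, when restricted to the support pattern of a single $E_i$, behave like scalars: the key is that $E_i \star E_i = E_i$ together with the $0$--$1$ structure forces the $\star$-eigenspace of $E_i$ inside $W$ to be spanned by $E_i$ alone, since any $M \in W$ with $M \star E_i \neq 0, \lambda E_i$ would generate a proper nonzero $\star$-idempotent below $E_i$, contradicting primitivity. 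I would flag this as the one place where the extension beyond $\mathbb{C}$ requires care, and note that the argument is modelled on the proof of Theorem~2.3.7 in~\cite{pono2} with the characteristic-zero hypotheses stripped out where they are not actually used.
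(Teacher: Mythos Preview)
Your plan is correct and takes a genuinely different route from the paper for the existence half. The paper first treats algebraically closed $\mathbb{F}$, where $(W,\star)$ is commutative and semisimple so its primitive idempotents automatically form a basis, and then handles arbitrary $\mathbb{F}$ by extending scalars to the algebraic closure $\mathbb{G}$, applying the closed case to obtain a $0$--$1$ basis $B''$ of $\mathbb{G}(B\cup B')$, and observing that $B''\subset W$ (its entries are $0,1$) must coincide with $B$. You instead argue directly that each $E_i\star W$ is one-dimensional over any $\mathbb{F}$, avoiding scalar extension entirely. The step you flag is exactly the delicate one, and the clean way to close it is: $E_i\star W$ is a unital $\mathbb{F}$-subalgebra of $(\mathbb{F}^{R_i},\star)\cong\mathbb{F}^{|R_i|}$, hence reduced, and by primitivity of $E_i$ it has no nontrivial idempotents, so it is a field; but a field embedding unitally into $\mathbb{F}^{n}$ must be $\mathbb{F}$ itself, since each coordinate projection is a nonzero and therefore injective $\mathbb{F}$-algebra homomorphism. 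Your route is more self-contained; the paper's buys brevity by reducing to the classical setting. For uniqueness both you and the paper argue that any such basis consists of primitive $\star$-idempotents; be aware, though, that your fallback claim ``linear independence rules out the degenerate coincidences'' is not literally true in small characteristic (over $\mathbb{F}_2$ one can have three linearly independent $0$--$1$ matrices with overlapping supports summing to $\mathbb{J}$), so you should read condition~(1) of Definition~\ref{coh3} as asserting that the supports partition $V^2$, which is the intended meaning and makes the counting argument go through.
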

\begin{proof}
The set $B=\{E_i \mid i\in[r]\}$ satisfies the coherence conditions by Lemma \ref{condcoh}.

Suppose $\mathbb{F}$ is algebraically closed. Then $B$ is a basis for $W$, since $W$ is commutative with respect to the Schur-Hadamard product, and a basis of a semisimple commutative algebra over an algebraically closed field is given by the set of its primitive idempotents.

Suppose $\mathbb{F}$ is not algebraically closed. Since $B$ is a linearly independent set, there is some $B'\subseteq W$ such that $B\cup B'$ is a basis for $W$. Let $\mathbb{G}$ be the algebraic closure of $\mathbb{F}$ and consider the linear space $\mathbb{G}(B\cup B')\subseteq \Mat_V(\mathbb{G})$. By construction, $\mathbb{G}(B\cup B')$ is closed under transposition, matrix multiplication and Schur-Hadamard multiplication and is thus a coherent algebra over $\mathbb{G}$. From the above, $\mathbb{G}(B\cup B')$ must then have a basis $B''$ of $0$-$1$-matrices satisfying the coherence conditions. Since all entries of the elements of $B''$ are $0$ and $1$, $B''\subset\mathbb{F}(B\cup B')$ is a basis for $W$ as well. As $B''$ is a set of primitive orthogonal idempotents of $W$, it holds that $B''\subseteq B$. But $B$ is a linearly independent set, and $B''$ is a basis for $W$. Whence $B''=B=\{E_i \mid i\in[r]\}$.

The uniqueness of $B$ follows from formula (\ref{coh4}) which, indeed, implies that any basis of $0$-$1$ matrices satisfying the coherence conditions must be the set of primitive idempotents of $W$.
\end{proof}

We can denote a coherent algebra on $V$ over $\mathbb{F}$ as $\mathbb{F}\mathcal{A}$ where $\mathcal{A}$ is some set of $0$-$1$-matrices satisfying the coherence conditions.
It is easily seen that for any $(u,v)\in V^2$ and binary relations $S$ and $T$ on $V$ with adjacency matrices $A_S,A_T\in\Mat_V(\mathbb{F})$ respectively,
\begin{equation}\label{count}
|\{ x\in V \mid (u,x)\in S,(x,v)\in T\}|=(A_SA_T)_{uv}
\end{equation}
if $\mathrm{char}(\mathbb{F})=0$. Set $\rho_W:V^2\rightarrow [r]$ to be $\rho_W(u,v)=i$ if $(u,v)\in R_i$. Since there are constants $p_{ij}^k\in\mathbb{F}$ such that
$$
E_iE_j=\sum_{k\in[r]}p_{ij}^kE_k,
$$
it then follows that
\begin{equation}\label{mod}
p_{ij}^k=|\{ x\in V \mid \rho_W (u,x)=i, \rho_W(x,v)=j\}|
\end{equation}
is the same for all $(u,v)$ such that $\rho_W(u,v)=k$, and hence, $\rho_W$ is a coherent configuration. Otherwise, if $\mathbb{F}$ has characteristic $q>0$, formula (\ref{count}) holds modulo $q$.  In particular if $q>|V|$,  $\rho_W$ is a coherent configuration.
\begin{remark}
In the literature, coherent algebras over a field $\mathbb{F}$ of positive characteristic have usually been defined to be the $\mathbb{F}$-span of the adjacency matrices of the relations $\{\rho^{-1}(\sigma)|\sigma\in\im(\rho)\}$ for some coherent configuration $\rho$. The latter discussion thus shows that, over fields of positive characteristic, Definition \ref{cohconf} defines a potentially larger class of algebras.
\end{remark}

The most intuitive morphisms between coherent configurations arise from the algebraic setting. Let $\mathcal{A},\mathcal{A}'\subseteq \Mat_V(\mathbb{F})$ satisfy the coherence conditions.
\begin{definition}[Isomorphism of coherent algebras]
An $\mathbb{F}$-linear bijection $\psi:\mathbb{F}\mathcal{A}\rightarrow \mathbb{F}\mathcal{A}'$ is said to be an \emph{isomorphism} of coherent algebras if:
\begin{enumerate}
\item $\psi(\mathbb{I})=\mathbb{I}$.
\item $\psi(\mathbb{J})=\mathbb{J}$.
\item $\psi(AB)=\psi(A)\psi(B)$ and $\psi(A\star B)=\psi(A)\star\psi(B)$ for all $A,B\in \mathbb{F}\mathcal{A}_\rho$.
\end{enumerate}
\end{definition}
That is, $\psi$ preserves the structure of $\mathbb{F}\mathcal{A}$ both as a matrix algebra and as an algebra with respect to $\star$. As a consequence, the image under $\psi$ of an element of the standard basis of $\mathbb{F}\mathcal{A}$ must be an element of the standard basis of $\mathbb{F}\mathcal{A}'$, since the standard basis of a coherent algebra is the set of its primitive idempotents with respect to $\star$. Conversely, the $\mathbb{F}$-linear extension of any bijection between the standard bases of $\mathbb{F}\mathcal{A}$ and $\mathbb{F}\mathcal{A}'$ is a coherent algebra isomorphism provided it is also a matrix algebra isomorphism.

Let $\rho$ and $\rho'$ be coherent configurations and denote their intersection numbers by $p_{\sigma\tau}^\kappa$ and $q_{\sigma'\tau'}^{\kappa'}$ respectively.
\begin{definition}[Algebraic isomorphism]
A bijection $\phi: \im(\rho)\rightarrow \im(\rho')$ is said to be an \emph{algebraic isomorphism} if for all $\sigma,\tau,\kappa\in \im(\rho)$
$$
p^{\kappa}_{\sigma\tau}=q^{\phi(\kappa)}_{\phi(\sigma)\phi(\tau)}.
$$
\end{definition}
Thus, an algebraic isomorphism between coherent configurations induces a bijection between the standard bases of their respective adjacency algebras. Such bijection linearly extends to a coherent algebra isomorphism.

Crucial to this paper is the fact that when coherent algebras are semisimple (with respect to matrix product), isomorphisms between them assume a very simple form. Indeed, it is an easy consequence of the Skolem-N\"other Theorem that if $\psi:W_1\rightarrow W_2$ is an algebra isomorphism, where $W_1$ and $W_2$ are semisimple subalgebras of $\text{Mat}_n(\mathbb{F})$, then there is some $S\in \mathrm{GL}_n(\mathbb{F})$ such that $\psi(A)=SAS^{-1}$ for all $A\in W_1$. The following is a direct consequence of Theorem 4.1.3 in \cite{zieschang}\footnote{The author actually proves this statement for coherent algebras whose diagonal matrices are multiples of the identity matrix. However, the same argument applies to the more general case, and, in particular, to our more general notion of coherent algebras in the sense of Definition \ref{cohconf}.}.
\begin{theorem}
The Jacobson radical of a coherent algebra $\mathbb{F}\mathcal{A}$ is a subspace of the span of the elements of the standard basis whose number of non-zero entries is divisible by $\mathrm{char}(\mathbb{F})$.
\end{theorem}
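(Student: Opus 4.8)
The plan is to pair $\mathbb{F}\mathcal{A}$ with itself via $\beta(A,B)=\mathrm{tr}(AB^{t})$ and to show simultaneously that this symmetric $\mathbb{F}$-bilinear form (i) is diagonalised by the standard basis, with $\beta(A,A)$ equal to the number of non-zero entries of $A$, and (ii) vanishes on the Jacobson radical; the statement then follows immediately. Note first that $\beta$ is well defined: a coherent algebra (Definition \ref{coh2}) is closed under transposition and matrix multiplication, so $AB^{t}\in\mathbb{F}\mathcal{A}$ and $\mathrm{tr}(AB^{t})\in\mathbb{F}$ makes sense.

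For (i), I would compute the Gram matrix of the standard basis $\mathcal{A}$. For $A,B\in\mathcal{A}$ the $(u,u)$-entry of $AB^{t}$ is $\sum_{x}A_{ux}B_{ux}$; since the $0$-$1$ matrices of $\mathcal{A}$ sum to the all-ones matrix (coherence condition (1)), they have pairwise disjoint supports, so this sum vanishes term by term when $A\neq B$, while for $A=B$ it equals the out-degree of $u$ in the relation whose adjacency matrix is $A$. Summing over $u\in V$ gives $\beta(A,B)=0$ for $A\neq B$ and $\beta(A,A)=\nu(A)$ in $\mathbb{F}$, where $\nu(A)$ is the number of non-zero entries of $A$. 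In particular, writing $x=\sum_{A\in\mathcal{A}}c_{A}A$, we have $\beta(x,B)=c_{B}\,\nu(B)$ for every $B\in\mathcal{A}$.

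For (ii), I would use that $\mathbb{F}\mathcal{A}$ is finite-dimensional, so $\mathrm{Rad}(\mathbb{F}\mathcal{A})$ is a nilpotent two-sided ideal. Hence for $x\in\mathrm{Rad}(\mathbb{F}\mathcal{A})$ and $A\in\mathbb{F}\mathcal{A}$ we have $xA^{t}\in\mathrm{Rad}(\mathbb{F}\mathcal{A})$ (again using transpose-closure), and every element of $\mathrm{Rad}(\mathbb{F}\mathcal{A})$ is a nilpotent matrix, whose characteristic polynomial is $t^{|V|}$ and whose trace over $\mathbb{F}$ is therefore $0$. Thus $\beta(x,A)=\mathrm{tr}(xA^{t})=0$. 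Combining (i) and (ii): for $x=\sum_{A\in\mathcal{A}}c_{A}A\in\mathrm{Rad}(\mathbb{F}\mathcal{A})$ and any $B\in\mathcal{A}$, $0=\beta(x,B)=c_{B}\,\nu(B)$ in $\mathbb{F}$, so $c_{B}=0$ whenever $\nu(B)\neq 0$ in $\mathbb{F}$, i.e. whenever $\mathrm{char}(\mathbb{F})\nmid\nu(B)$. Hence $x$ lies in the span of those standard basis elements $B$ with $\mathrm{char}(\mathbb{F})\mid\nu(B)$, which is the claim.

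I do not expect a genuine obstacle here. The two points needing care are that $\beta$ stays inside $\mathbb{F}\mathcal{A}$ — which is precisely what closure under transposition in Definition \ref{coh2} provides, and is the only place the algebra axioms are used — and that nilpotent matrices are trace-free over every field, which is elementary. This is exactly the argument behind Theorem 4.1.3 of \cite{zieschang}; as it never uses that the diagonal matrices of $\mathbb{F}\mathcal{A}$ are scalar, nor that $\mathbb{F}\mathcal{A}$ comes from a coherent configuration, it applies verbatim to arbitrary coherent algebras in the sense of Definition \ref{coh2}. As a sanity check, in characteristic $0$ the condition $\mathrm{char}(\mathbb{F})\mid\nu(B)$ forces $\nu(B)=0$, which is impossible for a standard basis matrix, so $\mathrm{Rad}(\mathbb{F}\mathcal{A})=0$ and $\mathbb{F}\mathcal{A}$ is semisimple — the classical fact, obtained here without appealing to Hermitian conjugation.
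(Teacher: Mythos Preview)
Your argument is correct and is precisely the trace-form argument the paper points to: the paper does not give its own proof but merely cites Theorem~4.1.3 of \cite{zieschang}, remarking in a footnote that the same proof goes through for coherent algebras in the sense of Definition~\ref{coh2}. You have spelled out that proof in full, and your closing remarks about why the argument needs neither scalar diagonal matrices nor an underlying coherent configuration are exactly what the paper's footnote asserts without detail.
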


For a coherent configuration $\rho$ on $V$, choose $u,v\in V$ such that $\rho(u,v)=\sigma$ and $\rho(u,u)=\tau$. It follows from formula (2.1.5) in \cite{pono2} that
$$
|\rho^{-1}(\sigma)|=|\rho^{-1}(\tau)||\{x\in V \mid \rho(u,x)=\sigma\}|.
$$
Since both factors on the right-hand side are no larger than $|V|$, it is clear that the prime factors of the size of any equivalence class of $\rho$ is no larger than $|V|$. We then deduce the following.

\begin{corollary}\label{newbound}
A coherent algebra on $V$ over $\mathbb{F}$ is semisimple with respect to matrix product if $\mathrm{char}(\mathbb{F})=0$ or $\mathrm{char}(\mathbb{F})>|V|$.
\end{corollary}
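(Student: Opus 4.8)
The plan is to combine the structure theorem for the Jacobson radical of a coherent algebra (the result cited from \cite{zieschang} immediately above) with the bound, established in the paragraph preceding the statement, on the prime factors of the sizes of the classes of a coherent configuration. A finite-dimensional algebra is semisimple exactly when its Jacobson radical is trivial, so it suffices to show that, under either hypothesis on $\mathrm{char}(\mathbb{F})$, the radical of a coherent algebra $\mathbb{F}\mathcal{A}$ on $V$ is $\{0\}$.

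First I would note that, by the cited theorem, the radical is contained in the span of those elements of the standard basis whose number of non-zero entries is divisible by $\mathrm{char}(\mathbb{F})$; hence it is enough to check that no standard basis element has this property, so that the span in question is $\{0\}$. Now each standard basis element is, by Proposition \ref{coh1} and the discussion around formula (\ref{coh4}), the adjacency matrix $A_\sigma$ of the relation $\rho_W^{-1}(\sigma)$ for the partition $\rho_W$ associated with $\mathbb{F}\mathcal{A}$, and its number of non-zero entries equals $|\rho_W^{-1}(\sigma)|$, a positive integer.

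If $\mathrm{char}(\mathbb{F})=0$, then no positive integer is divisible by $0$, so the radical is trivial and we are done. If $\mathrm{char}(\mathbb{F})=q>|V|$, I would first invoke the discussion before the Remark: since $q>|V|$, formula (\ref{count}) holds modulo $q$ and $\rho_W$ is in fact a coherent configuration. Then the class-size bound applies: every prime factor of $|\rho_W^{-1}(\sigma)|$ is at most $|V|<q$, so $q\nmid|\rho_W^{-1}(\sigma)|$. In both cases no standard basis element contributes, the Jacobson radical is $\{0\}$, and $\mathbb{F}\mathcal{A}$ is semisimple with respect to matrix product.

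The argument is largely a bookkeeping assembly of facts already in place. The only point requiring care is ensuring that, in positive characteristic, one is entitled to regard the standard basis as consisting of the adjacency matrices of a genuine coherent configuration, so that the class-size bound is available; this is precisely what fails when $\mathrm{char}(\mathbb{F})\le|V|$ (as noted in the Remark), which is why the hypothesis $\mathrm{char}(\mathbb{F})>|V|$ cannot be weakened. So the \emph{only} mild obstacle is to be explicit about why the class-size bound holds under the stated hypotheses and not under weaker ones.
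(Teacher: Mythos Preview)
Your proposal is correct and follows exactly the approach the paper intends: the corollary is stated immediately after the Zieschang theorem and the class-size bound with the words ``We then deduce the following,'' and you have spelled out precisely that deduction. Your care in noting that $\rho_W$ must first be a genuine coherent configuration (which the hypothesis $\mathrm{char}(\mathbb{F})>|V|$ guarantees via the discussion around formula~(\ref{mod})) before the class-size factorisation can be invoked is exactly the point the paper leaves implicit.
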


\section{Graph-like partitions}\label{gps}

In this section we describe some restrictions to be imposed on the partitions dealt with in the paper. Such restrictions are natural in the sense that they are necessary conditions to be satisfied by a partition of $k$-tuples into the orbits of an induced action of a group on $1$-tuples.

Fix some $k\in \mathbb{N}$, a set $V$ and let $\gamma\in\mathcal{P}(V^k)$. Define an action of $\Sym(k)$ on $V^k$ by letting, for each $\tau\in \Sym(k)$,  $\vec{v}^\tau$  be the element of $V^k$ with $i^{th}$ entry $v_{\tau^{-1}(i)}$.
\begin{definition}[Invariance]
$\gamma$ is \emph{invariant} if $\gamma(\vec{u})=\gamma(\vec{v})\implies \gamma(\vec{u}^\tau)=\gamma(\vec{v}^\tau)$ for all $\vec{u}, \vec{v}\in V^k$ and all $\tau\in \Sym(k)$. \footnote{The concept of an invariant partition has already been introduced in Theorem 6.1 in \cite{pono}.}
\end{definition}

Fix some $r\in [k]$ and $\vec{i}\in [k]^r$.  For a tuple $\vec{v} \in V^k$, we define its \emph{projection} on $\vec{i}$, denoted  $\pr_{\vec{i}}\vec{v}$ to be the tuple in $V^r$  with $j^{th}$ entry $v_{i_j}$.  Without ambiguity, we write $\pr_{r}\vec{v}$ for the tuple $\pr_{(1,\hdots,r)}\vec{v}$.
We denote by $\pr_{r}\gamma$ the partition of $V^r$ given by
$$
\pr_{r}\gamma(\vec{v})=\gamma(v_{1},v_{2},\hdots,v_{r},v_{r},\hdots,v_{r})
$$
and call it the $r$-\emph{projection} of $\gamma$. Note that if $\gamma$ is invariant, then for any $r \leq k$, $\pr_{r}\gamma$ is invariant.

\begin{definition}[$r$-consistency]
$\gamma$ is said to be $\vec{i}$-\emph{consistent} for some $\vec{i}\in [k]^r$ if for all $\vec{u},\vec{v}\in V^k$
$$
\gamma(\vec{u})=\gamma(\vec{v})\implies \pr_{r}\gamma(\pr_{\vec{i}}\vec{u})=\pr_{r}\gamma(\pr_{\vec{i}}\vec{v})
$$
If, in addition, $\gamma$ is $\vec{i}$-consistent for all $\vec{i}\in [k]^r$ we say that $\gamma$ is $r$-\emph{consistent}.
\end{definition}
Observe that if for some $r\leq k$, $\gamma$ is $r$-consistent, then it is $t$-consistent for all $t\leq r$. One may also verify that if $\gamma$ is invariant, then it is $k$-consistent if and only if for all $\vec{u},\vec{v}\in V^k$
\begin{equation}\label{consist}
\gamma(\pr_{k-1}\vec{u}\cdot u_{k-1})=\gamma(\pr_{k-1}\vec{v}\cdot v_{k-1}).
\end{equation}
\begin{definition}[Graph-like partition]
$\gamma$ is said to be a \emph{graph-like} partition of $V^k$ if it is invariant, $r$-consistent for all $r\leq k$ and for all $\vec{u},\vec{v}\in V^k$
\begin{equation}\label{prop}
\gamma(\vec{u})=\gamma(\vec{v})\implies(u_i=u_j\implies v_i=v_j\,\,\forall i,j\in [k]).
\end{equation}
\end{definition}
Note that if $\gamma$ is graph-like, then $\pr_t\gamma$ is graph-like for all $t\in [k]$. An example of graph-like partition is that of a coherent configuration, as introduced in Section \ref{cohs}.
\begin{proposition}\label{gp1}
A coherent configuration is a graph-like partition.
\end{proposition}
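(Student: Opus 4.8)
The plan is to check the three clauses in the definition of a graph-like partition one at a time. Since a coherent configuration $\rho$ on $V$ is, by Definition \ref{cohconf}, a (labelled) rainbow on $V$, hence a labelled partition of $V^2$, we are in the case $k=2$; so it suffices to verify that $\rho$ is invariant, that it is $2$-consistent (hence also $1$-consistent, by the observation that $r$-consistency implies $t$-consistency for every $t\leq r$), and that it satisfies (\ref{prop}). Two of these are essentially immediate. Invariance asks that $\rho(u_1,u_2)=\rho(v_1,v_2)$ forces $\rho(u_2,u_1)=\rho(v_2,v_1)$, which is exactly the forward implication of condition (2) of Definition \ref{cohrain}. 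For (\ref{prop}), if $u_1=u_2$ then $(u_1,u_2)$ is a diagonal pair, so by (\ref{rain1}) its colour lies in the set $\mathcal{I}\subseteq\im(\rho)$ provided by that equation; then $\rho(v_1,v_2)\in\mathcal{I}$, and a second application of (\ref{rain1}) forces $(v_1,v_2)$ onto the diagonal, i.e.\ $v_1=v_2$.

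The substance of the argument is a single claim: if $\rho(u,v)=\rho(u',v')$, then $\rho(u,u)=\rho(u',u')$ and $\rho(v,v)=\rho(v',v')$; that is, the diagonal colour of each coordinate is determined by the colour of the pair. I would prove this by reading that colour off from the intersection numbers. Put $\kappa=\rho(u,v)$ and take any $\sigma\in\mathcal{I}$. Consider the $x\in V$ with $\rho(u,x)=\sigma$ and $\rho(x,v)=\kappa$: since $\sigma\in\mathcal{I}$, the condition $\rho(u,x)=\sigma$ forces $(u,x)$ onto the diagonal by (\ref{rain1}), so $x=u$; and $x=u$ satisfies both conditions precisely when $\rho(u,u)=\sigma$. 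Hence the intersection number $p^{\kappa}_{\sigma\kappa}$ equals $1$ if $\sigma=\rho(u,u)$ and $0$ for every other $\sigma\in\mathcal{I}$, so $\rho(u,u)$ is the unique $\sigma\in\mathcal{I}$ with $p^{\kappa}_{\sigma\kappa}\neq 0$. This characterisation depends only on $\kappa=\rho(u,v)$, so $\rho(u,v)=\rho(u',v')$ yields $\rho(u,u)=\rho(u',u')$. The second coordinate is symmetric: $\rho(v,v)$ is the unique $\tau\in\mathcal{I}$ with $p^{\kappa}_{\kappa\tau}\neq 0$.

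It then remains to assemble $2$-consistency from the claim together with invariance. For $k=2$ one has $\pr_2\rho=\rho$, so $2$-consistency requires that, for each $\vec{i}\in[2]^2$, the equality $\rho(u_1,u_2)=\rho(v_1,v_2)$ implies $\rho(\pr_{\vec{i}}(u_1,u_2))=\rho(\pr_{\vec{i}}(v_1,v_2))$. The four choices $\vec{i}=(1,2),(2,1),(1,1),(2,2)$ reduce, respectively, to a tautology, to invariance, and to the two halves of the claim above. This exhausts the three clauses, so $\rho$ is graph-like. The only non-routine ingredient is the claim, and its single delicate point is the use of (\ref{rain1}) to see that a colour in $\mathcal{I}$ occurs only on diagonal pairs, which is exactly what makes the sum defining $p^{\kappa}_{\sigma\kappa}$ (for $\sigma\in\mathcal{I}$) collapse to the single term $x=u$.
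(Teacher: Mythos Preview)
Your proof is correct and follows essentially the same approach as the paper: invariance and property (\ref{prop}) come straight from the rainbow axioms, and the determination of the diagonal colour from the pair colour is extracted from the intersection numbers. The only cosmetic differences are that the paper reads off $\rho(u,u)$ from the quantity $\sum_{\tau}p^{\kappa}_{\sigma\tau}=|\{x:\rho(u,x)=\sigma\}|$ rather than from your single intersection number $p^{\kappa}_{\sigma\kappa}$, and it verifies $1$-consistency directly (leaving $2$-consistency implicit via invariance) whereas you verify $2$-consistency and deduce $1$-consistency.
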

\begin{proof}
Let $\rho$ be a coherent configuration on $V$. Then $\rho$ is a rainbow, and hence satisfies conditions $(1)$ and $(2)$ in Definition \ref{cohrain}, from which we deduce that it is invariant and satisfies formula (\ref{prop}). To show that it is $1$-consistent, let $u,v,u',v'\in V$ be such that $\rho(u,v)=\rho(u',v')$. For any $\sigma\in \im(\rho)$
$$
\{x\in V \mid \rho(u,x)=\sigma\}=\bigcup_{\tau\in\im(\rho)} \{x\in V \mid \rho(u,x)=\sigma,\rho(x,v)=\tau\}.
$$
The size of the right-hand side of the above  is independent of the choice of $(u,v)$ from the equivalence class $[(u,v)]_\gamma$. In particular, if $\sigma=\rho(u,u)$, then because $\rho(u,v)=\rho(u',v')$ there is exactly one $x\in V$ such that $\rho(u',x)=\sigma$, namely $x=u'$. Hence, $\rho(u,u)=\rho(u',u')$ and $1$-consistency of $\rho$ follows.
\end{proof}
Arguments of this kind appear repeatedly in our proofs of Theorems $\ref{thm:main1}$, $\ref{thm:main2}$ and $\ref{thm:main3}$.

Another graph-like partition which will be useful throughout the paper is that of \emph{atomic types} of $k$-tuples of vertices of a graph $\Gamma$, which we indicate by $\alpha_{k,\Gamma}$.  To be precise, we define
$$
\begin{matrix}
\alpha_{k,\Gamma}:& V^k& \rightarrow & \im(\Gamma)^{[k]^{(2)}}\\
& \vec{v} & \mapsto & (\Gamma(v_i,v_j))_{(i,j)\in[k]^{(2)}},
\end{matrix}
$$
where, as the reader may recall, we view graphs as labelled partitions.
\begin{definition}[$\Gamma$-partition]
We say that $\gamma$ is a $\Gamma$-partition if $\alpha_{k,\Gamma}\preceq \gamma$ for some graph $\Gamma$.
\end{definition}

The following result is a useful property of graph-like partitions.
\begin{lemma}\label{veryuseful}
For any graph-like $\gamma\in\mathcal{P}(V^k)$ and any $\vec{u},\vec{v}\in V^k$ such that $\gamma(\vec{u})=\gamma(\vec{v})$
$$
\gamma(\vec{u}\langle \vec{i},\pr_{\vec{j}}\vec{u}\rangle)=\gamma(\vec{v}\langle \vec{i},\pr_{\vec{j}}\vec{v}\rangle)
$$
for all $\vec{i}\in [k]^{(r)}$ and $\vec{j}\in [k]^r$.
\end{lemma}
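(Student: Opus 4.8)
The plan is to reduce the general statement to the special case where $\vec{i}$ and $\vec{j}$ have length $r=1$, and then iterate. So first I would observe that substituting an $r$-tuple $\pr_{\vec{j}}\vec{u}$ into the positions indexed by $\vec{i}\in[k]^{(r)}$ can be performed one coordinate at a time: writing $\vec{i}=(i_1,\dots,i_r)$ and $\vec{j}=(j_1,\dots,j_r)$, we have
$$
\vec{u}\langle \vec{i},\pr_{\vec{j}}\vec{u}\rangle
= \big(\cdots\big(\vec{u}\langle i_1,u_{j_1}\rangle\big)\langle i_2,u_{j_2}\rangle\cdots\big)\langle i_r,u_{j_r}\rangle,
$$
where each $\langle i_s,u_{j_s}\rangle$ is the single-coordinate overwrite. (Some care is needed because the $j_s$ may coincide with some $i_t$; but since the $i_t$ are pairwise distinct, one can reorder so that the $j_s$ are read off $\vec{u}$ before being overwritten, or equivalently note that the value placed at position $i_s$ is always $u_{j_s}$, the original entry of $\vec{u}$.) Hence it suffices to prove: if $\gamma(\vec{u})=\gamma(\vec{v})$ and $\gamma$ is graph-like, then $\gamma(\vec{u}\langle i,u_j\rangle)=\gamma(\vec{v}\langle i,v_j\rangle)$ for all $i,j\in[k]$, and then apply this $r$ times, using at each stage that the two tuples obtained so far are still $\gamma$-equivalent and that $\gamma$ remains graph-like on $V^k$ throughout.

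For the single-coordinate case I would use invariance together with $r$-consistency at level $r=2$, mimicking the argument in the proof of Proposition~\ref{gp1}. By invariance we may permute coordinates so that $i=k$ and $j=k-1$ (if $i=j$ there is nothing to prove, as then $\vec{u}\langle i,u_j\rangle=\vec{u}$). So we must show $\gamma(\pr_{k-1}\vec{u}\cdot u_{k-1})=\gamma(\pr_{k-1}\vec{v}\cdot v_{k-1})$, which is exactly equation~(\ref{consist}) in the characterization of $k$-consistency for invariant partitions. Since $\gamma$ is graph-like it is $k$-consistent, and combined with invariance equation~(\ref{consist}) holds whenever $\gamma(\vec{u})=\gamma(\vec{v})$. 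That settles the base case.

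The main obstacle, and the step I would be most careful about, is the bookkeeping in the reduction to single coordinates: making precise that overwriting several coordinates simultaneously equals a composition of single overwrites, handling collisions between the target indices $i_s$ and the source indices $j_s$, and verifying that after each single step the pair of tuples is still $\gamma$-equivalent so that the induction hypothesis applies. None of this is deep, but it is the part where an off-by-one or an index clash could slip in; once it is nailed down, the rest is a direct appeal to invariance and $k$-consistency via equation~(\ref{consist}). I would also remark that $r\leq k$ is forced here since $[k]^{(r)}$ is empty otherwise, and that when $\vec{j}\in[k]^r$ has repeated entries the statement still makes sense and the same argument goes through verbatim.
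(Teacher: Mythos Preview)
Your plan mirrors the paper's: prove the single-substitution case $\gamma(\vec{u}\langle i,u_j\rangle)=\gamma(\vec{v}\langle i,v_j\rangle)$ from invariance and $(k{-}1)$-consistency (via equation~(\ref{consist})), then iterate to reach length-$r$ substitutions. The single-substitution step is correct and is exactly what the paper does. The gap is in the iteration.

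You assert that the $r$ single overwrites can be reordered so that each source position $j_s$ is read before it is overwritten. This is false in general: take $k\ge 3$, $\vec{i}=(1,2,3)$, $\vec{j}=(2,3,1)$. The dependency graph (an edge $s\to t$ whenever $j_s=i_t$) is a $3$-cycle, so every ordering forces you at some stage to read from a position that has already been overwritten; the value you copy is then no longer the original $u_{j_s}$. Your fallback phrasing (``the value placed at position $i_s$ is always $u_{j_s}$'') is the desired conclusion, not a justification: the base case only lets you copy an entry of the \emph{current} tuple, not of the original $\vec{u}$. The repair is to bring invariance into the iteration as well, not just the base case. Any function $g\colon[k]\to[k]$ (here $g(i_s)=j_s$ and $g$ fixes the remaining coordinates) factors as a permutation followed by single ``collapse'' maps; invariance handles the permutation part, and in the cyclic example above the target is in fact a pure permutation of $\vec{u}$, so invariance alone already suffices there. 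The paper's own inductive step passes over this same subtlety, so your instinct that the index bookkeeping is the crux was exactly right --- but reordering by itself does not close the gap.
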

\begin{proof}
Let $\vec{u},\vec{v}$ be as in the statement. For any $s,t\in [k]$, $\gamma(\vec{u}\langle s,v_t\rangle)=\gamma(\vec{v}\langle s,u_t\rangle)$ holds true by invariance and $(k-1)$-consistency of $\gamma$.

Fix some $r<k$ and set $\vec{u}'=\pr_{\vec{j}}\vec{u}$ and $\vec{v}'=\pr_{\vec{j}}\vec{v}$ for some $\vec{j}\in [k]^r$. Suppose that $\gamma(\vec{v}\langle \vec{i},\vec{v}'\rangle)=\gamma(\vec{u}\langle \vec{i},\vec{u}'\rangle)$ holds for all $\vec{i}\in [k]^{(r)}$. Because $r<k$, there is some $i'\in [k]$ which is not an entry of $\vec{i}$. For some $l\in [k]$, let $x=v_l$ and $y=u_l$. Then
$$
\gamma((\vec{v}\langle \vec{i},\vec{v}'\rangle)\langle i',x\rangle)=\gamma((\vec{u}\langle \vec{i},\vec{u}'\rangle)\langle i',y\rangle)
$$
by invariance and $(k-1)$-consistency of $\gamma$, from which one concludes that
$$
\gamma(\vec{v}\langle \vec{i}\cdot i',\vec{v}'\cdot x\rangle)=\gamma(\vec{u}\langle \vec{i}\cdot i',\vec{u}'\cdot y\rangle)
$$
and hence, by setting $\vec{q}=\vec{j}\cdot l$
$$
\gamma(\vec{v}\langle \vec{i}\cdot i',\pr_{\vec{q}}\vec{v}\rangle)=\gamma(\vec{u}\langle \vec{i}\cdot i',\pr_{\vec{q}}\vec{u}\rangle).
$$
We deduce the desired statement by induction.
\end{proof}

\section{Refinement operators and procedures}\label{rop}

As previously stated, all the SPAS in this paper arise from \emph{refinement operators}, which we now define.  Fix some $k\in\mathbb{N}$.

\begin{definition}[Refinement operator]
A $k$-\emph{refinement operator} $R$ is a mapping which, for each set $V$,  assigns  to each $\gamma\in\mathcal{P}(V^k)$ a partition $R\circ\gamma\in \mathcal{P}(V^k)$ such that $\gamma\preceq R\circ\gamma$.
\end{definition}
Since labelled partitions are seen as mappings, the symbol $\circ$ really indicates composition thereof.
\begin{definition}[Fixed point]
$\gamma\in\mathcal{P}(V^k)$ is said to be a \emph{fixed point} of a $k$-refinement operator $R$ if $\gamma\approx R\circ\gamma$. In such cases we also say that $\gamma$ is $R$-\emph{stable}.
\end{definition}

Fix some $\gamma\in\mathcal{P}(V^k)$, set $X^0=\gamma$ and $X^i=R\circ X^{i-1}$. We then have an increasing sequence:
$$
X^0\preceq X^1\preceq \hdots \preceq X^i \preceq\hdots
$$

Because all elements of this sequence are bounded by a labelled partition of $V^k$ with exactly one element per equivalence class, there must be some $s\in\nats$ such that for all $i\geq s$, $X^i$ is  fixed point of $R$. For the smallest such $s$, denote $X^s$ by $[\gamma]^R$.

\begin{definition}[Graph-like operator]
A $k$-refinement operator $R$ is \emph{graph-like} if $R\circ\gamma$ is graph-like for all graph-like $\gamma\in \mathcal{P}(V^k)$ and sets $V$.
\end{definition}

\begin{definition}[Refinement procedure]\label{repr}
The family of mappings $\{R_1,R_2,\hdots\}$ is said to be a \emph{refinement procedure} if for each $k\in \mathbb{N}$
\begin{enumerate}
\item $R_k$ is a graph-like $k$-refinement operator.
\item If $\gamma$ is a graph-like fixed point of $R_k$ then $\pr_{k-1}\gamma$ is a fixed point of $R_{k-1}$.
\item For all sets $V$ and $\gamma\in\mathcal{P}(V^k)$, $[\gamma]^{R_k}$ is computable in time $|V|^{O(k)}$.
\end{enumerate}
\end{definition}

For each $k,r\in \nats$, the $k$-refinement operators of interest in this paper are the \emph{Weisfeiler-Leman operators} $\WL_{k,r}$, the \emph{counting logic operators} $\C_{k,r}$ and, for any field $\mathbb{F}$, the \emph{invertible map operators} $\IM_k^{\mathbb{F}}$.  For the former two, we are really interested in the case when $r <k$.  Thus, when  $k\leq r$, for convenience, we let $\WL_{k,r}\circ\gamma=\C_{k,r}\circ\gamma=\gamma$ for all $\gamma\in\mathcal{P}(V^k)$ and sets $V$. For $r < k$ define
$$
\begin{matrix}
\WL_{k,r}\circ \gamma:  &V^k&\rightarrow &\im(\gamma)\times\mult(\Phi^{\gamma,r})^{V^r}\\
 &\vec{v} &\mapsto  & (\gamma(\vec{v}),\{\{(\gamma(\vec{v}\langle \vec{i},\vec{u}\rangle)_{\vec{i}\in [k]^{(r)}} \mid \vec{u}\in V^r\}\}).
\end{matrix}
$$
$$
\begin{matrix}
\C_{k,r}\circ\gamma: & V^k& \rightarrow & \im(\gamma)\times(\mult(\im(\gamma)^{V^r}))^{[k]^{(r)}}\\
& \vec{v} &\mapsto & (\gamma(\vec{v}),(\{\{\gamma(\vec{v}\langle \vec{i},\vec{u}\rangle) \mid \vec{u}\in V^r\}\})_{\vec{i}\in [k]^{(r)}}).
\end{matrix}
$$
Let $\chi_{\vec{i},\sigma}^{\gamma,\vec{v}}$ be the adjacency matrix of the binary relation $\{(x,y)\in V^2 \mid \gamma(\vec{v}\langle \vec{i},(x,y)\rangle)=\sigma\}$. Similarly to the above, set $\IM_1^\mathbb{F}\circ\gamma=\IM_2^\mathbb{F}\circ\gamma=\gamma$ and, for $k>2$ define
$$
\begin{matrix}
\IM_k^{\mathbb{F}}\circ\gamma: & V^k& \rightarrow & \im(\gamma)\times(\mathrm{Mat}_V(\mathbb{F})^{\im(\gamma)\times [k]^{(2)}}/\sim)\\
& \vec{v}&\mapsto & (\gamma(\vec{v}),((\chi^{\gamma,\vec{v}}_{\vec{i},\sigma})_{\sigma\in \im(\gamma)})_{\vec{i}\in [k]^{(2)}})
\end{matrix}
$$
where $\sim$ is the equivalence relation on elements of $\mathrm{Mat}_V(\mathbb{F})^{\im(\gamma)\times [k]^{(2)}}$ under simultaneous similarity. That is, two tuples are equivalent if they lie in the same orbit of $GL_V(\mathbb{F})$ acting on the tuples by conjugation. Although the reader may find the above definitions rather technical, they are not crucial throughout the paper. Indeed, for $k>r$ and $\vec{u},\vec{v}\in V^k$ the following facts are sufficient:
\begin{enumerate}
\item $\WL_{k,r}\circ\gamma(\vec{u})=\WL_{k,r}\circ\gamma(\vec{v})$ if, and only if, $\gamma(\vec{u})=\gamma(\vec{v})$ and for all $\vec{\phi}\in \Phi^{\gamma,r}$ and $\vec{i}\in [k]^{(r)}$
$$
|\{\vec{x}\in V^r \mid \gamma(\vec{u}\langle \vec{i},\vec{x}\rangle)=\phi_{\vec{i}}\}|=|\{\vec{x}\in V^r \mid \gamma(\vec{v}\langle \vec{i},\vec{x}\rangle)=\phi_{\vec{i}}\}|.
$$
\item $\C_{k,r}\circ\gamma(\vec{u})=\C_{k,r}\circ\gamma(\vec{v})$ if $\gamma(\vec{u})=\gamma(\vec{v})$ and for all $\sigma\in\im(\gamma)$ and $\vec{i}\in [k]^{(r)}$
$$
|\{\vec{x}\in V^r \mid \gamma(\vec{u}\langle \vec{i},\vec{x}\rangle)=\sigma\}|=|\{\vec{x}\in V^r \mid \gamma(\vec{v}\langle \vec{i},\vec{x}\rangle)=\sigma\}|.
$$
\item $\IM_k^{\mathbb{F}}\circ\gamma(\vec{u})=\IM_k^{\mathbb{F}}\circ\gamma(\vec{v})$ if, and only if, $\gamma(\vec{u})=\gamma(\vec{v})$ and for each $\vec{i}\in[k]^{(2)}$ there exist some $S\in \mathrm{GL}_V(\mathbb{F})$ such that for all $\sigma\in\im(\gamma)$
$$
S\chi_{\vec{i},\sigma}^{\gamma,\vec{u}}S^{-1}=\chi_{\vec{i},\sigma}^{\gamma,\vec{v}}.
$$
\end{enumerate}

From this, one may derive the following \emph{stability conditions}.
\begin{proposition}\label{stab}
For any $\gamma\in\mathcal{P}(V^k)$ and $k>r$:
\begin{enumerate}
\item $\gamma$ is $\WL_{k,r}$-stable if, and only if, for all $\vec{v}\in V^k,\vec{i}\in [k]^{(r)}$ and $\vec{\phi}\in \Phi^{\gamma,r}$, the size of the set $\{\vec{x}\in V^r \mid \gamma(\vec{v}\langle \vec{i},\vec{x}\rangle)=\phi_{\vec{i}}\}$ is independent of the choice of $\vec{v}$ from the equivalence class $[\vec{v}]_\gamma$.
\item $\gamma$ is $\C_{k,r}$-stable if, and only if, for all $\vec{v}\in V^k,\vec{i}\in [k]^{(r)}$ and $\sigma\in \im(\gamma)$, the size of the set $\{\vec{x}\in V^r \mid \gamma(\vec{v}\langle \vec{i},\vec{x}\rangle)=\sigma\}$ is independent of the choice of $\vec{v}$ from the equivalence class $[\vec{v}]_\gamma$.
\item $\gamma$ is $\IM_k^{\mathbb{F}}$-stable if for all $\vec{u},\vec{v}\in V^k$ and $\vec{i}\in[k]^{(2)}$
$$
\gamma(\vec{u})=\gamma(\vec{v})\implies \exists S\in \mathrm{GL}_V(\mathbb{F}),S\chi_{\vec{i},\sigma}^{\gamma,\vec{u}}S^{-1}=\chi_{\vec{i}\,\sigma}^{\gamma,\vec{v}}\,\,\forall \sigma\in\im(\gamma).
$$
\end{enumerate}
\end{proposition}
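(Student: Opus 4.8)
The plan is to derive all three items from a single observation about refinement operators. For any $k$-refinement operator $R$ and any $\gamma\in\mathcal{P}(V^k)$ the containment $\gamma\preceq R\circ\gamma$ holds by definition, so $\gamma$ is $R$-stable, i.e. $\gamma\approx R\circ\gamma$, if and only if the reverse containment $\gamma\succeq R\circ\gamma$ also holds. Unwinding the definition of $\preceq$, the latter says precisely that $R\circ\gamma$ is constant on each equivalence class of $\gamma$: for all $\vec{u},\vec{v}\in V^k$, $\gamma(\vec{u})=\gamma(\vec{v})$ implies $R\circ\gamma(\vec{u})=R\circ\gamma(\vec{v})$. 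Hence for each of $R=\WL_{k,r},\C_{k,r},\IM_k^{\mathbb{F}}$ it suffices to use the characterizations $(1)$--$(3)$ of the list preceding the proposition to identify the extra invariant of $\vec{v}$ recorded by $R\circ\gamma(\vec{v})$ beyond $\gamma(\vec{v})$, and then to note that saying this invariant agrees for all $\vec{u},\vec{v}$ with $\gamma(\vec{u})=\gamma(\vec{v})$ is just a restatement of saying it is independent of the chosen representative of $[\vec{v}]_\gamma$.

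Concretely, for item $(1)$ I would apply fact $(1)$: assuming $\gamma(\vec{u})=\gamma(\vec{v})$, the equality $\WL_{k,r}\circ\gamma(\vec{u})=\WL_{k,r}\circ\gamma(\vec{v})$ holds exactly when $|\{\vec{x}\in V^r\mid \gamma(\vec{u}\langle\vec{i},\vec{x}\rangle)=\phi_{\vec{i}}\}|=|\{\vec{x}\in V^r\mid \gamma(\vec{v}\langle\vec{i},\vec{x}\rangle)=\phi_{\vec{i}}\}|$ for all $\vec{i}\in[k]^{(r)}$ and $\vec{\phi}\in\Phi^{\gamma,r}$; combined with the reduction above, this is precisely the condition asserted in item $(1)$. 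Item $(2)$ follows by the same scheme: fact $(2)$ shows that constancy of the counts on $\gamma$-classes implies $\C_{k,r}$-stability, and the converse is immediate from the definition of $\C_{k,r}$, since reading the multiplicity of each $\sigma\in\im(\gamma)$ off the multiset coordinate $(\{\{\gamma(\vec{v}\langle\vec{i},\vec{u}\rangle)\mid\vec{u}\in V^r\}\})_{\vec{i}\in[k]^{(r)}}$ shows that $\C_{k,r}\circ\gamma(\vec{u})=\C_{k,r}\circ\gamma(\vec{v})$ forces the displayed counts to coincide. For item $(3)$, only one implication is asserted, and I would quote fact $(3)$ directly: the hypothesis supplies, for every $\vec{i}\in[k]^{(2)}$ and every $\vec{u},\vec{v}$ with $\gamma(\vec{u})=\gamma(\vec{v})$, some $S\in\mathrm{GL}_V(\mathbb{F})$ with $S\chi_{\vec{i},\sigma}^{\gamma,\vec{u}}S^{-1}=\chi_{\vec{i},\sigma}^{\gamma,\vec{v}}$ for all $\sigma\in\im(\gamma)$, which is exactly the criterion in fact $(3)$ for $\IM_k^{\mathbb{F}}\circ\gamma(\vec{u})=\IM_k^{\mathbb{F}}\circ\gamma(\vec{v})$; hence $\gamma\succeq\IM_k^{\mathbb{F}}\circ\gamma$, i.e. $\gamma$ is $\IM_k^{\mathbb{F}}$-stable.

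I do not expect a genuine obstacle: once facts $(1)$--$(3)$ are taken as given, the proof is bookkeeping with the definitions of $\approx$, $\preceq$ and the three refinement operators. The only points needing care are the direction of the refinement relation — one must use that $\gamma\preceq R\circ\gamma$ is automatic so that stability reduces to the single containment $\gamma\succeq R\circ\gamma$ — and the one-directional nature of the cited characterizations $(2)$ and $(3)$, which forces the two halves of item $(2)$ to be argued separately (one via fact $(2)$, one via the definition of $\C_{k,r}$) and means that for item $(3)$ only the implication stated in the proposition is established.
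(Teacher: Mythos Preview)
Your proposal is correct and mirrors exactly what the paper does: the paper states the proposition immediately after facts $(1)$--$(3)$ with the sentence ``From this, one may derive the following \emph{stability conditions}'' and gives no further proof, so your argument---reducing $R$-stability to the single containment $\gamma\succeq R\circ\gamma$ and then reading off the characterizations from facts $(1)$--$(3)$---is precisely the intended derivation. Your care in handling the one-sided formulations of facts $(2)$ and $(3)$ is appropriate and matches the asymmetry in the proposition's statement.
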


Consider the following families of mappings:
\begin{enumerate}
\item For all $r\in \nats$, $\mathbf{WL}_r=\{\WL_{1,r},\WL_{2,r},\hdots\}$.
\item For all $r\in \nats$, $\mathbf{C}_r=\{\C_{1,r},\C_{2,r},\hdots\}$.
\item For any field $\mathbb{F}$, $\mathbf{IM}(\mathbb{F})=\{\IM_1^\mathbb{F},\IM_2^{\mathbb{F}},\hdots\}$.
\end{enumerate}

\begin{proposition}\label{ug}
The families $\mathbf{WL}_r$, $\mathbf{C}_r$ and $\mathbf{IM}(\mathbb{F})$ are refinement procedures for all $r\in \nats$ and fields $\mathbb{F}$.
\end{proposition}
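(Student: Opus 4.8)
The plan is to verify, for each of the families $\mathbf{WL}_r$, $\mathbf{C}_r$ and $\mathbf{IM}(\mathbb F)$, the three clauses of Definition~\ref{repr}; the arguments for the three families run in parallel, differing only in how the relevant operator definition is unwound. Since $\WL_{k,r}$ and $\C_{k,r}$ are the identity when $k\le r$, and $\IM_k^{\mathbb F}$ is the identity when $k\le 2$, and the identity operator is trivially a graph-like refinement operator satisfying clauses (2) and (3), I may assume throughout that $k>r$, respectively $k\ge 3$. Clause (3) is then routine: one application of any of these operators to $\gamma\in\mathcal{P}(V^k)$ runs in time $|V|^{O(k)}$, since for each $\vec v$ one ranges over at most $|V|^k$ relevant substitution tuples and reads off $\gamma$-values --- and, in the $\IM$ case, fills in the matrices $\chi_{\vec i,\sigma}^{\gamma,\vec v}$ and decides equality of the resulting invariants by pairwise simultaneous-similarity tests, which are decidable in polynomial time --- and the chain $\gamma=X^0\preceq X^1\preceq\cdots$ strictly increases the number of classes until it stabilizes, so the fixed point $[\gamma]^{R_k}$ is reached after at most $|V|^k$ applications.

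The substantive part of clause (1) is that $R_k\circ\gamma$ is graph-like whenever $\gamma$ is; that $\gamma\preceq R_k\circ\gamma$ is immediate from the definitions, and property~(\ref{prop}) descends from $\gamma$ to $R_k\circ\gamma$ because the latter refines the former. The organizing observation is that the label of $\vec v$ under $R_k\circ\gamma$ is a fixed function of the \emph{substitution data} of $\vec v$ --- the family $(\gamma(\vec v\langle\vec i,\vec w\rangle))_{\vec i\in[k]^{(r)},\,\vec w\in V^r}$, read modulo simultaneous similarity in the $\IM$ case --- and that, provided $\gamma$ is graph-like, this data transforms in a $\vec v$-independent way under coordinate operations. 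For invariance: invariance of $\gamma$ makes $\gamma(\vec a)\mapsto\gamma(\vec a^\tau)$ a well-defined bijection $\tau_*$ of $\im(\gamma)$, and together with an identity of the shape $\vec v^\tau\langle\vec i,\vec w\rangle=(\vec v\langle\tau^{-1}\!\circ\vec i,\vec w\rangle)^\tau$ this shows that the substitution data of $\vec v^\tau$ is obtained from that of $\vec v$ by permuting the index set $[k]^{(r)}$ (resp.\ $[k]^{(2)}$) along $\tau^{-1}$ and relabelling colours by $\tau_*$; pushing this through each operator definition, and transporting the similarity matrices in the $\IM$ case, gives invariance of $R_k\circ\gamma$. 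For $r$-consistency it suffices, by the reductions in Section~\ref{gps} together with the invariance just established, to verify the single instance~(\ref{consist}) for $R_k\circ\gamma$, i.e.\ that $R_k\circ\gamma(\vec u)=R_k\circ\gamma(\vec v)$ forces $R_k\circ\gamma(u_1,\dots,u_{k-1},u_{k-1})=R_k\circ\gamma(v_1,\dots,v_{k-1},v_{k-1})$. Here $k$-consistency of $\gamma$ (part of being graph-like) yields a well-defined collapse map $f$ on $\im(\gamma)$ with $f(\gamma(\vec a))=\gamma(a_1,\dots,a_{k-1},a_{k-1})$, and a short case analysis on whether $k-1$ and $k$ lie in $\vec i$ --- applying $f$ when neither does, the identity when $k$ does, and the transposition $\tau=(k-1\;k)$ with $\tau_*$ to rewrite the relevant terms as $(\vec u\langle\vec i',\vec w\rangle)^\tau$ with $\vec i'\in[k]^{(r)}$ when $k-1$ does but $k$ does not --- exhibits the substitution data of $(u_1,\dots,u_{k-1},u_{k-1})$ as a fixed transform of that of $\vec u$, whence equality of the data for $\vec u$ and $\vec v$ gives the claim. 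Lemma~\ref{veryuseful} is exactly the kind of tool these manipulations call on.

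For clause (2), suppose $\gamma$ is graph-like and $R_k$-stable; the aim is to show that $\pr_{k-1}\gamma$ is $R_{k-1}$-stable. Writing $\vec v=(w_1,\dots,w_{k-1},w_{k-1})$ for the padding of $\vec w\in V^{k-1}$, so that $\pr_{k-1}\gamma(\vec w)=\gamma(\vec v)$, one checks that for $\vec i\in[k-1]^{(r)}$ and $\vec x\in V^r$ the value $\pr_{k-1}\gamma(\vec w\langle\vec i,\vec x\rangle)$ equals $\gamma(\vec v\langle\vec i,\vec x\rangle)$ when $k-1\notin\vec i$ and $f(\gamma(\vec v\langle\vec i,\vec x\rangle))$ when $k-1\in\vec i$ --- in either case a fixed colour-map $h_{\vec i}$ applied to $\gamma(\vec v\langle\vec i,\vec x\rangle)$. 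Hence each quantity occurring in the $R_{k-1}$-stability condition of Proposition~\ref{stab} for $\pr_{k-1}\gamma$ (a count of $\vec x$ realising a prescribed colour for $\WL$/$\C$, or a matrix $\chi$ for $\IM$) is assembled from the corresponding quantities for $\gamma$ at $\vec v$, grouped over the fibres of $h_{\vec i}$; and since $[k-1]^{(r)}\subseteq[k]^{(r)}$, $[k-1]^{(2)}\subseteq[k]^{(2)}$, and tuples $\vec w,\vec w^*$ in the same $\pr_{k-1}\gamma$-class have paddings in the same $\gamma$-class, the $R_k$-stability of $\gamma$ transfers to $R_{k-1}$-stability of $\pr_{k-1}\gamma$ (for $\IM$, reusing the single similarity matrix for the whole block of $\chi$'s).

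I expect the main obstacle to lie in clause (1), and within it in the verification of~(\ref{consist}): one has to track precisely how the substitution data behaves under collapsing a coordinate, and the one genuinely delicate case --- where a substituted position coincides with a coordinate of the collapsed pair, so that the identified value is no longer present --- must be handled not by the collapse map $f$ but by invoking invariance of $\gamma$ through the transposition $(k-1\;k)$. Everything else is a matter of patiently unwinding the operator definitions with the help of Lemma~\ref{veryuseful} and the observations already recorded in Section~\ref{gps}.
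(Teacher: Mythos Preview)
Your proposal is correct and follows essentially the same approach as the paper: both verify invariance via the identity $\vec v^\tau\langle\vec i,\vec w\rangle=(\vec v\langle\tau^{-1}\!\circ\vec i,\vec w\rangle)^\tau$, reduce graph-likeness to~(\ref{consist}) via the same three-way case split on whether $k$ and/or $k-1$ occur in $\vec i$ (the paper packages this as an auxiliary Lemma~\ref{useful} with the collapse map written $\sigma\mapsto\overline\sigma$, your $f$), and obtain clause~(2) by the same fibre-grouping of colours under that collapse. One small remark: the lemma you actually invoke in the case analysis is not Lemma~\ref{veryuseful} but precisely this three-case description of $\gamma(\vec v'\langle\vec i,\vec x\rangle)$; Lemma~\ref{veryuseful} concerns substituting entries of $\vec v$ itself into $\vec v$ and is not what is needed here.
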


For the proof of Proposition~\ref{ug} and that of the next auxiliary Lemma, we use the following notations and conventions.  Fix a graph-like partition $\gamma\in\mathcal{P}(V^k)$.  Let $\overline\gamma=\pr_{k-1}\gamma$ and for all $\vec{v}\in V^k$, if $\sigma=\gamma(\vec{v})$, let $\overline\sigma=\overline\gamma(\pr_{k-1}\vec{v})$.  For $\pi\in\Sym(k)$, we let $\sigma^\pi=\gamma(\vec{v}^\pi)$, and define an action on $[k]^{(r)}$ by setting $(\pi(\vec{i}))_j=\pi(i_j)$ for all $j\in[r]$. For any $\vec{v}\in V^k$ we denote $\vec{v}'=\pr_{k-1}\vec{v}\cdot v_{k-1}$. Note that $\overline\sigma$ and $\sigma^\pi$ are well defined, since $\gamma$ is graph-like.
\begin{lemma}\label{useful}
Let $\vec{i}\in[k]^{(r)}$. Then:
\begin{enumerate}
\item If $k$ is an entry of $\vec{i}$, then $\gamma(\vec{v}'\langle\vec{i},\vec{x}\rangle)=\gamma(\vec{v}\langle \vec{i},\vec{x}\rangle)$.
\item If $k-1$ is an entry of $\vec{i}$ but $k$ is not, then $\gamma(\vec{v}^\pi\langle\vec{i},\vec{x}\rangle)=\gamma(\vec{v}'\langle \vec{i},\vec{x}\rangle)$, where $\pi=(k-1,\;k)$ is a transposition of $\Sym(k)$.
\item If neither $k$ nor $k-1$ are entries of $\vec{i}$, then $\gamma(\vec{v}\langle\vec{i},\vec{x}\rangle)=\sigma\implies \gamma(\vec{v}'\langle\vec{i},\vec{x}\rangle)=\overline\sigma$. In particular,
\begin{equation}\label{ro1}
\{\vec{x}\in V^r \mid \gamma(\vec{v}'\langle \vec{i},\vec{x}\rangle)=\kappa\}=\bigcup_{\{\sigma\in\im(\gamma) \mid \overline\sigma=\kappa\}}\{\vec{x}\in V^r \mid \gamma(\vec{v}\langle\vec{i},\vec{x}\rangle)=\sigma\}.
\end{equation}
\end{enumerate}
\end{lemma}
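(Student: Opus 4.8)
The plan is to settle all three cases by the same elementary observation: in Cases~1 and 2 the two tuples being compared are in fact \emph{equal}, and in Case~3 one of them is obtained from the other by the operation $\vec{w}\mapsto\vec{w}'$, so in every case the equality of $\gamma$-values follows at once after unwinding the definitions of $\vec{v}\langle\vec{i},\vec{x}\rangle$, of $\vec{v}'$, and of $\overline{(\cdot)}$. The only structural input is that $\overline{(\cdot)}$ is well defined on $\im(\gamma)$, which is precisely equation~(\ref{consist}) and holds because $\gamma$ is graph-like; I would cite this rather than reprove it.

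For Case~1, suppose $i_s=k$. Then $\vec{v}$ and $\vec{v}'$ agree on every coordinate except possibly $k$, and coordinate $k$ of both $\vec{v}\langle\vec{i},\vec{x}\rangle$ and $\vec{v}'\langle\vec{i},\vec{x}\rangle$ is overwritten by $x_s$; hence $\vec{v}\langle\vec{i},\vec{x}\rangle=\vec{v}'\langle\vec{i},\vec{x}\rangle$ and the claim is immediate. For Case~2, with $\pi=(k-1,\;k)$, the tuple $\vec{v}^\pi$ carries $v_k$ in coordinate $k-1$ and $v_{k-1}$ in coordinate $k$, whereas $\vec{v}'$ carries $v_{k-1}$ in each of coordinates $k-1$ and $k$. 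Since $k-1$ is one of the substitution positions, coordinate $k-1$ of either tuple is overwritten by the same entry of $\vec{x}$; since $k$ is not a substitution position, coordinate $k$ of either tuple equals $v_{k-1}$; and all remaining coordinates agree. So again $\vec{v}^\pi\langle\vec{i},\vec{x}\rangle=\vec{v}'\langle\vec{i},\vec{x}\rangle$.

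For Case~3, neither $k$ nor $k-1$ is an entry of $\vec{i}$, so every $i_s\le k-2$. Setting $\vec{w}=\vec{v}\langle\vec{i},\vec{x}\rangle$, I would check the tuple identity $\vec{w}'=\vec{v}'\langle\vec{i},\vec{x}\rangle$: both have $x_s$ in coordinate $i_s$, carry $v_j$ in every other coordinate $j\le k-1$, and (using $w_{k-1}=v_{k-1}$, as $k-1\notin\vec{i}$) carry $v_{k-1}$ in coordinate $k$. Using $\vec{w}$ as the representative of $\sigma=\gamma(\vec{w})$ in the definition of $\overline{(\cdot)}$ then gives $\overline{\sigma}=\overline\gamma(\pr_{k-1}\vec{w})=\gamma(\vec{w}')=\gamma(\vec{v}'\langle\vec{i},\vec{x}\rangle)$, which is the asserted implication. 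Formula~(\ref{ro1}) follows by partitioning $\{\vec{x}\in V^r\mid\gamma(\vec{v}'\langle\vec{i},\vec{x}\rangle)=\kappa\}$ according to the value $\sigma=\gamma(\vec{v}\langle\vec{i},\vec{x}\rangle)$: by the implication just proved each such $\vec{x}$ falls in exactly one block $\{\vec{x}\mid\gamma(\vec{v}\langle\vec{i},\vec{x}\rangle)=\sigma\}$ with $\overline\sigma=\kappa$, and conversely each such block sits inside the set on the left.

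I do not anticipate any real obstacle here: the argument is bookkeeping about which coordinates of $\vec{v}$, $\vec{v}'$ and $\vec{v}^\pi$ are affected by the substitution $\langle\vec{i},\cdot\rangle$, the only care needed being to track the two distinguished coordinates $k-1$ and $k$ correctly in each of the three regimes (is $k-1$ substituted? is $k$?). The sole point where graph-likeness is used, beyond the pure combinatorics of substitution, is in asserting that $\gamma(\vec{w}')$ depends only on $\gamma(\vec{w})$, i.e.\ that $\overline\sigma$ is well defined.
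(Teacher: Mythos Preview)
Your proposal is correct and follows essentially the same route as the paper: the paper dismisses parts (1) and (2) as ``trivial to check'' (which is exactly your observation that the two tuples in question are literally equal), and for part (3) it sets $\vec{w}=\vec{v}\langle\vec{i},\vec{x}\rangle$, verifies $\vec{w}'=\vec{v}'\langle\vec{i},\vec{x}\rangle$, and reads off $\gamma(\vec{w}')=\overline\sigma$ from the definition, just as you do. Your derivation of formula~(\ref{ro1}) by partitioning according to $\sigma=\gamma(\vec{v}\langle\vec{i},\vec{x}\rangle)$ likewise matches the paper's two-inclusion argument.
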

\begin{proof}
Statements $(1)$ and $(2)$ are trivial to check.

Let $\vec{w}=\vec{v}\langle\vec{i},\vec{x}\rangle$ with $\gamma(\vec{w})=\sigma$. Then $\vec{w}'=\vec{v}'\langle \vec{i},\vec{x}\rangle$, so $\gamma(\vec{w}')=\overline\sigma$ by definition. From this, and the fact that $\gamma$ is $(k-1)$-consistent, the right-hand side of formula (\ref{ro1}) is a subset of the left-hand side. The reverse inclusion follows from the definition of $\overline\sigma$ in terms of $\sigma$, and statement $(3)$ follows.
\end{proof}

\begin{proof}[Proof of Proposition \ref{ug}]
We check that each of the families of mappings satisfy the conditions in Definition~\ref{repr}. Note that $\gamma\preceq R\circ\gamma$ for any $k$-refinement operator $R$. Hence, since $\gamma$ is graph-like, for any $\vec{u},\vec{v}\in V^k$, $R\circ\gamma(\vec{u})=R\circ\gamma(\vec{v})$ implies that $u_i=u_j\implies v_i=v_j$ for all $i,j\in [k]$. Thus, to show that $R\circ\gamma$ is graph-like, it suffices to verify that $R\circ \gamma$ is invariant and satisfies (\ref{consist}). That is, that  for all $\vec{u},\vec{v}\in V^k$, $R\circ\gamma(\vec{u})=R\circ\gamma(\vec{v})\implies R\circ\gamma(\vec{u}')=R\circ\gamma(\vec{v}').$
\item
\paragraph{$\mathbf{WL}_r$ is a refinement procedure.}
We first show that $\WL_{k,r}\circ\gamma$ is invariant and satisfies formula (\ref{consist}), and is thus a graph-like partition.

Suppose $\WL_{k,r}\circ\gamma(\vec{u})=\WL_{k,r}\circ\gamma(\vec{v})$. Then $\gamma(\vec{u})=\gamma(\vec{v})$ by definition, and hence $\gamma(\vec{u}^\tau)=\gamma(\vec{v}^\tau)$ since $\gamma$ is graph-like and thus invariant. Furthermore, from the invariance of $\gamma$ it follows that for all $\tau\in\Sym(k)$ and $\vec{\phi}\in\Phi^{\gamma,r}$
$$
\{\vec{x}\in V^r \mid \gamma(\vec{v}^\tau\langle \tau(\vec{i}),\vec{x}\rangle)=(\phi_{\vec{i}})^\tau\,\,\forall\vec{i}\in [k]^{(r)}\}=\{\vec{x}\in V^r \mid \gamma(\vec{v}\langle \vec{i},\vec{x}\rangle)=\phi_{\vec{i}}\,\forall\vec{i}\in [k]^{(r)}\}.
$$
Hence, $\WL_{k,r}\circ\gamma$ is invariant.

For all $\vec{\phi}\in \Phi^{\gamma,r}$ let $\vec{\phi}^\dag\in\Phi^{\gamma,r}$ be defined as
$$
\phi^\dag_{\vec{i}}=\begin{cases}
\overline\phi_{\vec{i}}\;\;\textrm{ if neither $k$ nor $k-1$ are entries of $\vec{i}$}\\
(\phi_{\vec{i}})^\pi\;\;\textrm{ if $k-1$ is an entry of $\vec{i}$}\\
\phi_{\vec{i}}\;\;\textrm{ otherwise }
\end{cases}
$$
where $\pi=(k-1,k)\in \Sym(k)$. Observe that if $\{\vec{x}\in V^r \mid \gamma(\vec{v}'\langle \vec{i},\vec{x}\rangle)=\psi_{\vec{i}}\,\,\forall\vec{i}\in[k]^{(r)}\}$ is non-empty for some $\vec{v}\in V^k$ and $\vec{\psi}\in \Phi^{\gamma,r}$, then $\vec{\psi}=\vec{\phi^\dag}$ for some $\vec{\phi}\in \Phi^{\gamma,r}$. It follows from the definition of $\vec{\phi^\dag}$ and Lemma \ref{useful} that
\begin{equation}\label{ro2}
\{\vec{x}\in V^r \mid \gamma(\vec{v}'\langle \vec{i},\vec{x}\rangle)=\psi_{\vec{i}}\,\,\forall\vec{i}\in [k]^{(r)}\}=\bigcup_{\{\vec{\phi}\in\Phi^{\gamma,r} \mid \vec{\psi}=\vec{\phi^\dag}\}}\{\vec{x}\in V^r \mid \gamma(\vec{v}\langle \vec{i},\vec{x}\rangle)=\phi_{\vec{i}}\,\,\forall\vec{i}\in[k]^{(r)}\}.
\end{equation}
Since $\WL_{k,r}\circ\gamma(\vec{u})=\WL_{k,r}\circ\gamma(\vec{v})$, for all $\vec{\phi}\in \Phi^{\gamma,r}$
$$
|\{\vec{x}\in V^r \mid \gamma(\vec{u}\langle \vec{i},\vec{x}\rangle)=\phi_{\vec{i}}\,\,\forall\vec{i}\in[k]^{(r)}\}|=|\{\vec{x}\in V^r \mid \gamma(\vec{v}\langle \vec{i},\vec{x}\rangle)=\phi_{\vec{i}}\,\,\forall\vec{i}\in[k]^{(r)}\}|.
$$
Thus, because the right-hand side of formula (\ref{ro2}) is a disjoint union, we deduce that
$$
|\{\vec{x}\in V^r \mid \gamma(\vec{u}'\langle \vec{i},\vec{x}\rangle)=\phi_{\vec{i}}\,\,\forall\vec{i}\in [k]^{(r)}\}|=|\{\vec{x}\in V^r \mid \gamma(\vec{v}'\langle \vec{i},\vec{x}\rangle)=\phi_{\vec{i}}\,\,\forall\vec{i}\in [k]^{(r)}\}|
$$
for all $\vec{\phi}\in \Phi^{\gamma,r}$, which implies $\WL_{k,r}\circ\gamma(\vec{u}')=\WL_{k,r}\circ\gamma(\vec{v}')$, and hence, that $\WL_{k,r}\circ\gamma$ is graph-like.

Suppose now that $\gamma$ is $\WL_{k,r}$-stable. Observe that for all $\vec{\phi}\in\Phi^{\overline\gamma,r}$
\begin{equation}\label{ro3}
\{\vec{x}\in V^r \mid \overline\gamma(\pr_{k-1}\vec{v}\langle\vec{i},\vec{x}\rangle)=\phi_{\vec{i}}\forall\vec{i}\in [k-1]^{(r)}\}=\bigcup_{\{\vec{\psi}\in\Phi^{\gamma,r} \mid \overline{\psi}_{\vec{i}}=\phi_{\vec{i}}\forall\vec{i}\in [k-1]^{(r)}\}}\{\vec{x}\in V^r \mid \gamma(\vec{v}\langle\vec{i},\vec{x}\rangle)=\psi_{\vec{i}}\forall\vec{i}\in [k]^{(r)}\}.
\end{equation}
From the $\WL_{k,r}$-stability of $\gamma$, and the fact that $\gamma(\vec{u})=\gamma(\vec{v})$, it follows that for all $\vec{\psi}\in \Phi^{\gamma,r}$
$$
|\{\vec{x}\in V^r \mid \gamma(\vec{u}\langle\vec{i},\vec{x}\rangle)=\psi_{\vec{i}}\forall\vec{i}\in [k]^{(r)}\}|=|\{\vec{x}\in V^r \mid \gamma(\vec{v}\langle\vec{i},\vec{x}\rangle)=\psi_{\vec{i}}\forall\vec{i}\in [k]^{(r)}\}|.
$$
Since the right-hand side of formula (\ref{ro3}) is a disjoint union, it holds that for all $\vec{\phi}\in \Phi^{\overline\gamma,r}$
$$
|\{\vec{x}\in V^r \mid \overline\gamma(\pr_{k-1}\vec{u}\langle\vec{i},\vec{x}\rangle)=\phi_{\vec{i}}\forall\vec{i}\in [k-1]^{(r)}\}|=|\{\vec{x}\in V^r \mid \overline\gamma(\pr_{k-1}\vec{v}\langle\vec{i},\vec{x}\rangle)=\phi_{\vec{i}}\forall\vec{i}\in [k-1]^{(r)}\}|.
$$
Thus, $\overline\gamma$ is $\WL_{k-1,r}$-stable.

To see that the stable partition $[\gamma]^{\WL_{k,r}}$ can be computed in time $|V|^{O(k)}$, note that the number of iterations of the refinement operator before the fixed point is reached is at most $|V|^k$.  At each step, for each $k$-tuple $\vec{v}$, we need to compute the colour $\WL_{k,r}\circ\gamma(\vec{v})$ which involves checking the colour of $k|V|$ distinct tuples.  The total time required at each step is therefore $O(k|V|^{k+1})$.  Repeating this for $|V|^k$ steps gives us the required bound.
\item
\paragraph{$\mathbf{C}_{r}$ is a refinement procedure.}
The proof in this case, is similar to that of $\WL_{k,r}$.

Suppose $\C_{k,r}\circ\gamma(\vec{u})=\C_{k,r}\circ\gamma(\vec{v})$. The invariance of $\gamma$ implies that $\gamma(\vec{u}^\tau)=\gamma(\vec{v}^\tau)$, and that
$$
\{\vec{x}\in V^r \mid \gamma(\vec{v}^\tau\langle \tau(\vec{i}),\vec{x}\rangle)=\sigma^\tau\}=\{\vec{x}\in V^r \mid \gamma(\vec{v}\langle \vec{i},\vec{x}\rangle)=\sigma\}
$$
for all $\tau\in\Sym(k)$ and $\vec{i}\in [k]^{(r)}$. Hence
$$
|\{\vec{x}\in V^r \mid \gamma(\vec{u}^\tau\langle \tau(\vec{i}),\vec{x}\rangle)=\sigma^\tau\}|=|\{\vec{x}\in V^r \mid \gamma(\vec{v}^\tau\langle \tau(\vec{i}),\vec{x}\rangle)=\sigma^\tau\}|
$$
and therefore $\C_{k,r}\circ\gamma(\vec{u}^\tau)=\C_{k,r}\circ\gamma(\vec{v}^\tau)$, thus showing that $\C_{k,r}\circ\gamma$ is invariant.

As $\gamma$ is graph-like, and $\C_{k,r}\circ\gamma(\vec{u})=\C_{k,r}\circ\gamma(\vec{v})$, it follows from $(1)$ in Lemma \ref{useful} that for all $\vec{i}\in [k]^{(r)}$ with some entry equal to $k$
\begin{equation}\label{useful8}
|\{\vec{x}\in V^r \mid \gamma(\vec{u}'\langle \vec{i},\vec{x}\rangle)=\sigma\}|=|\{\vec{x}\in V^r \mid \gamma(\vec{v}'\langle \vec{i},\vec{x}\rangle)=\sigma\}|.
\end{equation}
If $\vec{i}\in [k]^{(r)}$ has an entry equal $k-1$ but none equal to $k$, then (2) in Lemma \ref{useful} implies that 
$$
|\{\vec{x}\in V^r \mid \gamma(\vec{v}'\langle \vec{i},\vec{x}\rangle)=\sigma\}|=|\{\vec{x}\in V^r \mid \gamma(\vec{v}^\pi\langle \pi(\vec{i}),\vec{x}\rangle)=\sigma\}|.
$$
From the invariance of $\C_{k,r}\circ\gamma$ we deduce that formula (\ref{useful8}) holds also for such values of $\vec{i}$. Finally, if no entry of $\vec{i}$ is equal to $k$ or $k-1$, $\gamma$ satisfies formula (\ref{ro1}). As the right-hand side of the latter is a disjoint union, and for all $\sigma\in\im(\gamma)$
$$
|\{\vec{x}\in V^r \mid \gamma(\vec{u}\langle\vec{i},\vec{x}\rangle)=\sigma\}|=|\{\vec{x}\in V^r \mid \gamma(\vec{v}\langle\vec{i},\vec{x}\rangle)=\sigma\}|
$$
it follows that formula (\ref{useful8}) holds also when neither $k$ nor $k-1$ are entries of $\vec{i}$. Thus, $C_{k,r}\circ\gamma(\vec{u}')=\C_{k,r}\circ\gamma(\vec{v}')$ and $\C_{k,r}\circ\gamma$ is therefore graph-like.

Suppose $\gamma$ is $\C_{k,r}$-stable. Since $\gamma$ is graph-like, it holds that
\begin{equation}\label{rol4}
\{\vec{x}\in V^r \mid \overline\gamma(\pr_{k-1}\vec{v}\langle\vec{i},\vec{x}\rangle)=\kappa\}=\bigcup_{\{\sigma\in\im(\gamma) \mid \overline{\sigma}=\kappa\}}\{\vec{x}\in V^r \mid \gamma(\vec{v}\langle\vec{i},\vec{x}\rangle)=\sigma\}.
\end{equation}
for all $\vec{i}\in [k-1]^{(r)}$ and $\kappa\in\im(\overline\gamma)$. Since the right-hand side of the above is a disjoint union and
$$
|\{\vec{x}\in V^r \mid \gamma(\vec{u}\langle \vec{i},\vec{x}\rangle)=\sigma\}|=|\{\vec{x}\in V^r \mid \gamma(\vec{v}\langle \vec{i},\vec{x}\rangle)=\sigma\}|
$$
it then follows from formula (\ref{rol4}) that 
$$
|\{\vec{x}\in V^r \mid \overline\gamma(\pr_{k-1}\vec{u}\langle \vec{i},\vec{x}\rangle)=\sigma\}|=|\{\vec{x}\in V^r \mid \overline\gamma(\pr_{k-1}\vec{v}\langle \vec{i},\vec{x}\rangle)=\sigma\}|.
$$
Thus, $\overline\gamma$ is $\C_{k-1,r}$-stable.

A very similar argument to the case of $\WL_{k,r}$ shows that $[\gamma]^{\C_{k,r}}$ can be computed in time $|V|^{O(k)}$.
\item
\paragraph{$\mathbf{IM}(\mathbb{F})$ is a refinement procedure.}
We proceed via the same proof strategy as for the above refinement procedures.

Suppose $\IM_k^\mathbb{F}\circ\gamma(\vec{u})=\IM_k^\mathbb{F}\circ\gamma(\vec{v})$. Then $\gamma(\vec{u})=\gamma(\vec{v})$ and hence, $\gamma(\vec{u}^\tau)=\gamma(\vec{v}^\tau)$ for all $\tau\in\Sym(k)$. Note that for any $\sigma\in\im(\gamma)$, $\chi^{\gamma,\vec{v}}_{\vec{i},\sigma}=\chi^{\gamma,\vec{v}^\tau}_{\tau(\vec{i}),\sigma^\tau}$, from which one deduces that $\IM_k^\mathbb{F}\circ\gamma(\vec{u}^\tau)=\IM_k^\mathbb{F}\circ\gamma(\vec{v}^\tau)$, whence, $\IM_k^{\mathbb{F}}\circ\gamma$ is invariant.

From Lemma \ref{useful}, it follows that $\chi^{\gamma,\vec{v}'}_{\vec{i},\sigma}=\chi^{\gamma,\vec{v}}_{\vec{i},\sigma}$ if $k$ is an entry of $\vec{i}$, $\chi^{\gamma,\vec{v}'}_{\vec{i},\sigma}=\chi^{\gamma,\vec{v}^\pi}_{\pi(\vec{i}),\sigma}$ if $k-1$ is an entry of $\vec{i}$, but $k$ is not, and
$$
\chi^{\gamma,\vec{v}'}_{\vec{i},\sigma}=\sum_{\{\kappa\in\im(\gamma) \mid \overline\kappa=\sigma\}}\chi^{\gamma,\vec{v}}_{\vec{i},\kappa}
$$
if neither $k$ nor $k-1$ are entries of $\vec{i}$. Since $S\chi^{\gamma,\vec{v}}_{\vec{i},\sigma}S^{-1}=\chi^{\gamma,\vec{u}}_{\vec{i},\sigma}$ for all $\sigma\in\im(\gamma)$, then $S\chi^{\gamma,\vec{v}'}_{\vec{i},\sigma}S^{-1}=\chi^{\gamma,\vec{u}'}_{\vec{i},\sigma}$ for all $\sigma\in\im(\gamma)$. From this, $\IM_k^{\mathbb{F}}\circ\gamma(\vec{u}')=\IM_k^{\mathbb{F}}\circ\gamma(\vec{v}')$ follows, and thus, $\IM_k^{\mathbb{F}}\circ\gamma$ is graph-like.

From the fact that $\gamma$ is graph-like we deduce that for all $\vec{i}\in [k-1]^{(2)}$
$$
\chi^{\overline\gamma,\pr_{k-1}\vec{v}}_{\vec{i},\sigma}=\sum_{\{\kappa\in\im(\gamma) \mid \overline{\kappa}=\sigma\}}\chi_{\vec{i},\kappa}^{\gamma,\vec{v}}.
$$
Thus, if $S\chi^{\gamma,\vec{v}}_{\vec{i},\sigma}S^{-1}=\chi^{\gamma,\vec{u}}_{\vec{i},\sigma}$ for all $\sigma\in\im(\gamma)$, then $S\chi^{\overline\gamma,\pr_{k-1}\vec{v}}_{\vec{i},\sigma}S^{-1}=\chi^{\overline\gamma,\pr_{k-1}\vec{u}}_{\vec{i},\sigma}$ for all $\sigma\in\im(\gamma)$. In particular, if $\gamma$ is $\IM_k^\mathbb{F}$-stable, then $\overline\gamma$ is $\IM_{k-1}^\mathbb{F}$-stable.

Finally, showing that $[\gamma]^{\IM_k^\mathbb{F}}$ can be computed in time $|V|^{O(k)}$ is similar to the previous cases.  Again, the number of refinement steps is at most $|V|^k$.  However, at each step, and for each pair of tuples, we need to perform a simultaneous similarity test.  For this, we rely on the fact that simultaneous similarity is decidable in polynomial time.  This follows from the fact that testing simultaneous similarity can be reduced in polynomial time to testing module isomorphism (see~\cite{chistov}, for example) and the polynomial-time algorithm for the latter problem over any field is given by Brookbanks and Luks in~\cite{luks}.
\end{proof}
For a refinement procedure $\mathbf{R}=\{R_1,R_2,\hdots\}$, let $\mathcal{S}_{\mathbf{R}}=\{\overline{R}_1,\overline{R}_2,\hdots\}$ be the family of mappings where for all graphs $\Gamma$, $\overline{R}_1(\Gamma)=\Gamma$ and $\overline{R}_k(\Gamma)=\pr_2[\alpha_{k,\Gamma}]^{R_k}$ for $k\geq2$. Then, from the above discussion, $\mathcal{S}_{\mathbf{R}}$ is a SPAS. We define $\mathcal{S}_{\mathbf{WL},r}=\{\overline{\WL}_{1,r},\overline{\WL}_{2,r},\hdots\}, \mathcal{S}_{\mathbf{C},r}=\{\overline{\C}_{1,r},\overline{\C}_{2,r},\hdots\}$ and $\mathcal{S}_{\mathbf{IM}(\mathbb{F})}=\{\overline{\IM}_1^\mathbb{F},\overline{\IM}_2^\mathbb{F},\hdots\}$ to be the SPAS' obtained in such a manner from the refinement procedures $\mathbf{WL}_r, \mathbf{C}_r$ and $\mathbf{IM}(\mathbb{F})$ respectively.

\section{Proof of Theorem $\ref{thm:main1}$}

Hereafter, when talking of the families of operators $\mathbf{WL}_r$ and  $\mathbf{C}_r$ with $r=1$, we drop the subscript.  That is, we write  $\mathbf{WL}=\{\WL_1,\WL_2,\hdots\}$ and $\mathbf{C}=\{\C_1,\C_2,\hdots\}$ to denote the families $\mathbf{WL}_1=\{\WL_{1,1},\WL_{2,1},\hdots\}$ and $\mathbf{C}_1=\{\C_{1,1},\C_{2,1},\hdots\}$ respectively.  The distinction between the \emph{procedure} $\mathbf{WL}_k$ and the \emph{operator} $\WL_k$ should not cause any confusion, likewise for the procedure $\mathbf{C}_k$ and the operator $\C_k$.

For showing that for two refinement procedures $\mathbf{X}=\{X_1,X_2,\hdots\}$ and $\mathbf{Y}=\{Y_1,Y_2,\hdots\}$, their respective SPAS- satisfy $\mathcal{S}_{\mathbf{X}}\succeq \mathcal{S}_{\mathbf{Y}}$ our general strategy is as follows.  For each $k\in\nats$ we find some $k'\in\nats$ such that for any graph $\Gamma$ and any $Y_k$-stable graph-like $\Gamma$-partition $\gamma\in\mathcal{P}(V^k)$, there is some $X_{k'}$-stable graph-like $\Gamma$-partition $\gamma'\in\mathcal{P}(V^{k'})$ such that $\pr_2\gamma' \succeq \pr_2\gamma$. By taking $\gamma'$ to be $[\alpha_{k,\Gamma}]^{X_{k'}}$ (which is graph-like by Proposition \ref{ug}), we then have that $\overline{X}_k(\Gamma)=\pr_2\gamma' \succeq \pr_2\gamma\succeq \overline{Y}_{k}(\Gamma)$ for all graphs $\Gamma$. The last inequality follows from the fact that $\overline{Y}_{k}(\Gamma)$ is the $2$-projection of a minimal $Y_{k}$-stable partition refining $\alpha_{k,\Gamma}$.

For the proofs in this section, the operators $\WL_{k,r}$ and $\C_{k,r}$ are only considered in the case $k> r$, since the statements hold trivially when $k\leq r$.
\begin{lemma}\label{pf1}
For all $k,r\in\mathbb{N}$, any $\WL_{k,r}$-stable partition is $\C_{k,r}$-stable.
\end{lemma}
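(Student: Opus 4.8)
The plan is to prove the stronger fact that $\C_{k,r}$ always yields a coarser partition than $\WL_{k,r}$, namely $\C_{k,r}\circ\gamma \preceq \WL_{k,r}\circ\gamma$ for every $\gamma\in\mathcal{P}(V^k)$, and then obtain the lemma by a sandwiching argument. We may assume $k>r$, the case $k\leq r$ being trivial since there both operators act as the identity.

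The first step is a direct comparison of the two operator outputs. Fix $\gamma$ and $\vec{v}\in V^k$. By definition $\WL_{k,r}\circ\gamma(\vec{v})$ consists of $\gamma(\vec{v})$ together with the multiset, over $\vec{u}\in V^r$, of the \emph{tuples} $(\gamma(\vec{v}\langle\vec{i},\vec{u}\rangle))_{\vec{i}\in[k]^{(r)}}$, whereas $\C_{k,r}\circ\gamma(\vec{v})$ consists of $\gamma(\vec{v})$ together with, \emph{for each} $\vec{i}\in[k]^{(r)}$ separately, the multiset over $\vec{u}\in V^r$ of the colours $\gamma(\vec{v}\langle\vec{i},\vec{u}\rangle)$. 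For a fixed $\vec{i}_0\in[k]^{(r)}$, the $\vec{i}_0$-component of $\C_{k,r}\circ\gamma(\vec{v})$ is obtained from the multiset of tuples occurring in $\WL_{k,r}\circ\gamma(\vec{v})$ simply by replacing each tuple in that multiset with its $\vec{i}_0$-entry; this is a well-defined operation on multisets, so its result is a function of the multiset of tuples alone. Since moreover the colour $\gamma(\vec{v})$ is the common first coordinate of both outputs, we conclude that $\C_{k,r}\circ\gamma(\vec{v})$ is a function of $\WL_{k,r}\circ\gamma(\vec{v})$; equivalently, $\WL_{k,r}\circ\gamma(\vec{u})=\WL_{k,r}\circ\gamma(\vec{v})$ implies $\C_{k,r}\circ\gamma(\vec{u})=\C_{k,r}\circ\gamma(\vec{v})$, which is precisely $\C_{k,r}\circ\gamma\preceq \WL_{k,r}\circ\gamma$.

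It then remains to conclude. As $\C_{k,r}$ is a refinement operator we always have $\gamma\preceq \C_{k,r}\circ\gamma$, so the three partitions are sandwiched as $\gamma\preceq \C_{k,r}\circ\gamma\preceq \WL_{k,r}\circ\gamma$. If $\gamma$ is $\WL_{k,r}$-stable then $\gamma\approx \WL_{k,r}\circ\gamma$, and the sandwich (together with transitivity of $\preceq$) forces $\gamma\approx \C_{k,r}\circ\gamma$, that is, $\gamma$ is $\C_{k,r}$-stable.

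I do not expect a genuine obstacle here; the only point to be careful about is the bookkeeping in the first step --- that ``marginalising'' the multiset of tuples at a coordinate $\vec{i}_0$ really does depend only on that multiset (so it is constant on $\approx$-classes of $\WL_{k,r}\circ\gamma$), and that the shared first coordinate $\gamma(\vec{v})$ is accounted for. One could instead argue through the stability characterizations of Proposition~\ref{stab}: $\WL_{k,r}$-stability says the number of $\vec{x}\in V^r$ that realise a prescribed family of colours at all position-tuples $\vec{i}$ simultaneously is determined by $[\vec{v}]_\gamma$, and summing these counts over all colour families agreeing with a fixed $\sigma$ at a fixed $\vec{i}$ gives exactly the count that $\C_{k,r}$-stability asks to be class-invariant; but the direct comparison of the operators above is shorter and avoids re-deriving those characterizations.
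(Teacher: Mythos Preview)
Your proof is correct. The core idea---that the multiset of colours at a fixed position tuple $\vec{i}_0$ is the marginal of the multiset of colour tuples---is exactly what the paper uses, only the paper phrases it via Proposition~\ref{stab}: it writes
\[
\{\vec{x}\in V^r \mid \gamma(\vec{v}\langle \vec{j},\vec{x}\rangle)=\sigma\}=\bigcup_{\{\vec{\phi}\in\Phi^{\gamma,r}\mid \phi_{\vec{j}}=\sigma\}}\{\vec{x}\in V^r \mid \gamma(\vec{v}\langle \vec{i},\vec{x}\rangle)=\phi_{\vec{i}}\ \forall\vec{i}\in[k]^{(r)}\}
\]
as a disjoint union and concludes that the left-hand count is class-invariant because each summand on the right is. This is precisely the alternative you sketch in your final paragraph. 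Your packaging is a mild variant: you prove the operator-level inequality $\C_{k,r}\circ\gamma\preceq\WL_{k,r}\circ\gamma$ for \emph{all} $\gamma$ (a slightly stronger statement than the lemma asks for) and then sandwich, whereas the paper works directly with the stability counts on a stable $\gamma$. Both amount to the same marginalisation observation; your route has the small bonus of yielding the general operator comparison, the paper's route avoids the extra sandwich step.
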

\begin{proof}
Let $\gamma\in\mathcal{P}(V^k)$ be $\WL_{k,r}$-stable. The size of $\{\vec{x}\in V^r \mid \gamma(\vec{v}\langle \vec{i},\vec{x}\rangle)=\phi_{\vec{i}}\,\,\forall\vec{i}\in [k]^{(r)}\}$ is then independent of the choice of $\vec{v}$ from the equivalence class $[\vec{v}]_{\gamma}$ for all $\vec{\phi}\in\Phi^{\gamma,r}$, by Proposition \ref{stab}. For each $\vec{j}\in [k]^{(r)}$ and $\sigma\in\im(\gamma)$, the following holds:
$$
\{\vec{x}\in V^r \mid \gamma(\vec{v}\langle \vec{j},\vec{x}\rangle)=\sigma\}=\bigcup_{\{\vec{\phi}\in\Phi^{\gamma,r} \mid \phi_{\vec{j}}=\sigma\}}\{\vec{x}\in V^r \mid \gamma(\vec{v}\langle \vec{i},\vec{x}\rangle)=\phi_{\vec{i}}\,\,\forall\vec{i}\in [k]^{(r)}\}.
$$
The right-hand side of the above is a disjoint union of sets whose sizes are independent of the choice of $\vec{v}$ from the equivalence class $[\vec{v}]_{\gamma}$. Hence, the size of the left-hand side, is also the same for any choice of $\vec{v}$ from the class $[\vec{v}]_\gamma$, implying that $\gamma$ is $\C_{k,r}$-stable.
\end{proof}

\begin{corollary}\label{pf4}
For any graph $\Gamma$ and $k,r\in\nats$
$$
\overline{\C}_{k,r}(\Gamma)\preceq\overline{\WL}_{k,r}(\Gamma).
$$
\end{corollary}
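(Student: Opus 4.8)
The plan is to unfold the definitions and reduce the statement to a containment between the corresponding stable partitions of $k$-tuples. For $k=1$ both sides equal $\Gamma$, so I would assume $k\geq 2$, where by definition $\overline{\C}_{k,r}(\Gamma)=\pr_2[\alpha_{k,\Gamma}]^{\C_{k,r}}$ and $\overline{\WL}_{k,r}(\Gamma)=\pr_2[\alpha_{k,\Gamma}]^{\WL_{k,r}}$. Since $\pr_2$ preserves $\preceq$ (if $\gamma\preceq\delta$ and $\pr_2\delta(\vec u)=\pr_2\delta(\vec v)$ then $\delta$ agrees on the padded tuples $(u_1,u_2,\dots,u_2)$ and $(v_1,v_2,\dots,v_2)$, hence so does $\gamma$, giving $\pr_2\gamma(\vec u)=\pr_2\gamma(\vec v)$), it suffices to prove $[\alpha_{k,\Gamma}]^{\C_{k,r}}\preceq[\alpha_{k,\Gamma}]^{\WL_{k,r}}$.

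For this I would use that $[\alpha_{k,\Gamma}]^{\C_{k,r}}$ is the coarsest $\C_{k,r}$-stable partition refining $\alpha_{k,\Gamma}$ --- precisely the ``minimality'' phenomenon invoked in the general strategy at the start of this section. Granting it, the argument is immediate: by Lemma~\ref{pf1} the partition $[\alpha_{k,\Gamma}]^{\WL_{k,r}}$ is $\C_{k,r}$-stable, and it refines $\alpha_{k,\Gamma}$ by construction, so $[\alpha_{k,\Gamma}]^{\C_{k,r}}\preceq[\alpha_{k,\Gamma}]^{\WL_{k,r}}$ by minimality. To justify the ``coarsest'' claim one first checks that $\C_{k,r}$ is monotone, i.e. $\gamma\preceq\delta$ implies $\C_{k,r}\circ\gamma\preceq\C_{k,r}\circ\delta$: if $\delta$ refines $\gamma$ then $\gamma$ factors as $f\circ\delta$ for a map $f\colon\im(\delta)\to\im(\gamma)$, and each count $|\{\vec x\in V^r \mid \gamma(\vec v\langle\vec i,\vec x\rangle)=\sigma\}|$ appearing in fact~(2) for $\C_{k,r}$ is the sum over $\tau\in f^{-1}(\sigma)$ of the corresponding $\delta$-counts, so agreement of the $\delta$-data forces agreement of the $\gamma$-data. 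Then, writing $X^0=\alpha_{k,\Gamma}$ and $X^i=\C_{k,r}\circ X^{i-1}$, a routine induction using monotonicity together with $\C_{k,r}$-stability of any candidate $\delta\succeq\alpha_{k,\Gamma}$ shows $X^i\preceq\delta$ for all $i$, hence $[\alpha_{k,\Gamma}]^{\C_{k,r}}=X^s\preceq\delta$.

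Putting these together gives $\overline{\C}_{k,r}(\Gamma)=\pr_2[\alpha_{k,\Gamma}]^{\C_{k,r}}\preceq\pr_2[\alpha_{k,\Gamma}]^{\WL_{k,r}}=\overline{\WL}_{k,r}(\Gamma)$. I do not expect a real obstacle here: the only point needing care is the monotonicity of $\C_{k,r}$, which is pure bookkeeping with multisets and fibre counts, and the rest is the general ``coarsest stable refinement'' principle already sketched for the proof strategy; indeed one could alternatively just cite that discussion and Lemma~\ref{pf1} directly.
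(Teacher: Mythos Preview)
Your proposal is correct and follows the same route as the paper: invoke Lemma~\ref{pf1} to see that $[\alpha_{k,\Gamma}]^{\WL_{k,r}}$ is $\C_{k,r}$-stable, then use minimality of $[\alpha_{k,\Gamma}]^{\C_{k,r}}$ among $\C_{k,r}$-stable refinements of $\alpha_{k,\Gamma}$ and project. The paper treats the corollary as an immediate consequence of Lemma~\ref{pf1} via the general strategy stated at the start of Section~5, leaving the monotonicity/minimality step implicit, whereas you have spelled it out; your final sentence already recognizes this.
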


\begin{lemma}\label{lm1}
For all $k,r\in \nats$, any $\C_k$-stable partition is $\C_{k,r}$-stable.
\end{lemma}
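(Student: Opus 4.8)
The plan is to decompose the simultaneous substitution of $r$ coordinates that defines $\C_{k,r}$ into a sequence of single-coordinate substitutions, each of which is controlled by $\C_k$-stability. The trivial cases come first: if $r\geq k$ then $\C_{k,r}\circ\gamma=\gamma$ by convention, so every partition is $\C_{k,r}$-stable, and if $r=1$ then $\C_{k,1}=\C_k$ and there is nothing to prove. So assume $2\leq r<k$ and proceed by induction on $r$, with inductive hypothesis that every $\C_k$-stable partition is $\C_{k,r-1}$-stable. The combinatorial fact underpinning the step is that, for $\vec{i}=(i_1,\dots,i_r)\in[k]^{(r)}$ and any $\vec{v}\in V^k$, $\vec{x}\in V^r$, writing $\vec{i}'=(i_1,\dots,i_{r-1})\in[k]^{(r-1)}$,
$$
\vec{v}\langle\vec{i},\vec{x}\rangle=\bigl(\vec{v}\langle\vec{i}',\pr_{r-1}\vec{x}\rangle\bigr)\langle i_r,x_r\rangle,
$$
which holds precisely because $i_r\notin\{i_1,\dots,i_{r-1}\}$, so the final substitution overwrites none of the earlier ones.

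First I would extract the relevant structure constants from $\C_k$-stability: by Proposition \ref{stab}(2) with parameter $1$, for every $j\in[k]$ and colours $\kappa,\tau\in\im(\gamma)$ the quantity $c^{j}_{\kappa\tau}:=|\{x\in V \mid \gamma(\vec{w}\langle j,x\rangle)=\tau\}|$ is well defined, i.e.\ independent of the choice of $\vec{w}$ with $\gamma(\vec{w})=\kappa$. Now fix $\vec{v}\in V^k$, $\vec{i}\in[k]^{(r)}$ and $\sigma\in\im(\gamma)$, and partition $\{\vec{x}\in V^r \mid \gamma(\vec{v}\langle\vec{i},\vec{x}\rangle)=\sigma\}$ according to the colour $\kappa=\gamma(\vec{v}\langle\vec{i}',\pr_{r-1}\vec{x}\rangle)$ of the partial substitution. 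By the inductive hypothesis applied to $\vec{i}'\in[k]^{(r-1)}$, the number $a_\kappa$ of tuples $(x_1,\dots,x_{r-1})$ producing colour $\kappa$ depends only on $[\vec{v}]_\gamma$; and for each such partial tuple the associated $k$-tuple $\vec{w}=\vec{v}\langle\vec{i}',\pr_{r-1}\vec{x}\rangle$ satisfies $\gamma(\vec{w})=\kappa$, hence admits exactly $c^{i_r}_{\kappa\sigma}$ completions $x_r$ with $\gamma(\vec{w}\langle i_r,x_r\rangle)=\sigma$. Therefore the cardinality in question equals $\sum_{\kappa\in\im(\gamma)}a_\kappa\, c^{i_r}_{\kappa\sigma}$ (equivalently, unwinding the induction, $\sum\prod_{j=1}^{r}c^{i_j}_{\kappa_{j-1}\kappa_j}$ summed over colour sequences with $\kappa_0=\gamma(\vec{v})$ and $\kappa_r=\sigma$), which depends only on $[\vec{v}]_\gamma$. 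By Proposition \ref{stab}(2) this is exactly $\C_{k,r}$-stability.

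I do not anticipate a real obstacle. The only points requiring care are making the single-coordinate decomposition precise and noting that it is the distinctness of the entries of $\vec{i}$ that makes it valid (otherwise a later substitution could clobber an earlier one, and the partial colour $\kappa$ would no longer be the correct bookkeeping variable); and checking that ``independent of the representative $\vec{v}$'' genuinely passes through the disjoint-union decomposition, which comes down to the inner count $c^{i_r}_{\kappa\sigma}$ being a function of $\kappa$ alone — exactly the content of $\C_k$-stability.
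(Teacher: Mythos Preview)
Your proof is correct and follows essentially the same route as the paper's: an induction on $r$ that factors the $r$-fold substitution as an $(r-1)$-fold substitution followed by a single-coordinate substitution, summing over the intermediate colour to obtain a product of structure constants coming from $\C_{k,r-1}$-stability and $\C_k$-stability respectively. The paper's write-up differs only cosmetically (it names the two families of constants $q_{\sigma\tau}$ and $p_{\sigma\tau}$ and presents the sum as $\sum_\alpha p_{\sigma\alpha}q_{\alpha\tau}$), and you are slightly more explicit about why the distinctness of the entries of $\vec{i}$ is needed for the decomposition to be valid.
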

\begin{proof}
Let $\gamma\in\mathcal{P}(V^k)$ be $\C_k$-stable. Then, by definition, it is $\C_{k,1}$ stable.

Suppose $\gamma$ is $\C_{k,m}$-stable for some $m<k$. Let $\vec{j}\in [k]^{(m)}$. Since $m<k$, there is some $j'\in [k]$ such that $j'\neq j_i$ for all $i\in [m]$. For all $\sigma,\tau\in \im(\gamma)$ and $\vec{v}\in V^k$ such that $\gamma(\vec{v})=\sigma$ define
$$
p_{\sigma\tau}=|\{u\in V \mid \gamma(\vec{v}\langle j',u\rangle)=\tau\}|
$$
and
$$
q_{\sigma\tau}=|\{\vec{w}\in V^m \mid \gamma(\vec{v}\langle \vec{j},\vec{w}\rangle)=\tau\}|.
$$

Because $\gamma$ is $\C_k$-stable and, by induction hypothesis, also $\C_{k,m}$-stable, $p_{\sigma\tau}$ and $q_{\sigma\tau}$ are independent of the choice of $\vec{v}$ from the equivalence class $[\vec{v}]_\gamma$.

Observe that
$$
\{\vec{w}\cdot u \in V^{m+1} \mid \gamma(\vec{v}\langle \vec{j}\cdot j',\vec{w}\cdot u\rangle)=\tau\}=\{\vec{w}\cdot u\in V^{m+1} \mid \gamma((\vec{v}\langle \vec{j},\vec{w}\rangle)\langle j',u\rangle)=\tau\}.
$$
From this one deduces that
$$
|\{\vec{w}\cdot u\in V^{m+1} \mid \gamma(\vec{v}\langle \vec{j}\cdot j',\vec{w}\cdot u\rangle)=\tau\}|=\sum_{\alpha\in \im(\gamma)} p_{\sigma \alpha}q_{\alpha\tau}
$$
The right-hand side of the above is independent of the choice of $\vec{v}$ from the equivalence class $[\vec{v}]_\gamma$. Hence, $\gamma$ is $\C_{k,m+1}$-stable.  Thus, by induction, it is $\C_{k,r}$ stable for all $r$.
\end{proof}

\begin{corollary}\label{pf2}
For any graph $\Gamma$ and $k,r\in\nats$
$$
\overline{\C}_k(\Gamma)\succeq \overline{\C}_{k,r}(\Gamma).
$$
\end{corollary}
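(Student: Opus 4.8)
This is an immediate consequence of Lemma~\ref{lm1}, so the proof will be short. Recall that $\overline{\C}_k(\Gamma)=\pr_2[\alpha_{k,\Gamma}]^{\C_k}$ and $\overline{\C}_{k,r}(\Gamma)=\pr_2[\alpha_{k,\Gamma}]^{\C_{k,r}}$, where in each case the bracketed partition is the fixed point obtained by iterating the relevant refinement operator from the atomic type partition $\alpha_{k,\Gamma}$ of $V^k$. When $k\leq r$ the operator $\C_{k,r}$ is the identity, so $\overline{\C}_{k,r}(\Gamma)=\pr_2\alpha_{k,\Gamma}\preceq \pr_2[\alpha_{k,\Gamma}]^{\C_k}=\overline{\C}_k(\Gamma)$ and there is nothing to do; hence we may assume $k>r$.

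The plan is as follows. First observe that $[\alpha_{k,\Gamma}]^{\C_k}$ is, by construction, a $\C_k$-stable partition of $V^k$ refining $\alpha_{k,\Gamma}$, so by Lemma~\ref{lm1} it is also $\C_{k,r}$-stable. Next, as recorded in the description of the general strategy above, $[\alpha_{k,\Gamma}]^{\C_{k,r}}$ is a coarsest $\C_{k,r}$-stable partition refining $\alpha_{k,\Gamma}$; this uses that $\C_{k,r}$ is monotone with respect to $\preceq$, so that iterating it from $\alpha_{k,\Gamma}$ cannot overshoot any $\C_{k,r}$-stable refinement of $\alpha_{k,\Gamma}$. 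Combining these two facts gives $[\alpha_{k,\Gamma}]^{\C_{k,r}}\preceq [\alpha_{k,\Gamma}]^{\C_k}$. Finally, apply $\pr_2$ and use that $\gamma\preceq\rho$ implies $\pr_2\gamma\preceq\pr_2\rho$ (since $\pr_2\rho(\vec{u})=\pr_2\rho(\vec{v})$ forces $\rho(u_1,u_2,\dots,u_2)=\rho(v_1,v_2,\dots,v_2)$, hence $\gamma(u_1,u_2,\dots,u_2)=\gamma(v_1,v_2,\dots,v_2)$); this yields $\overline{\C}_{k,r}(\Gamma)=\pr_2[\alpha_{k,\Gamma}]^{\C_{k,r}}\preceq \pr_2[\alpha_{k,\Gamma}]^{\C_k}=\overline{\C}_k(\Gamma)$, which is exactly the claim $\overline{\C}_k(\Gamma)\succeq\overline{\C}_{k,r}(\Gamma)$.

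There is no genuine obstacle here: all the substance lies in Lemma~\ref{lm1}, which is already proved, and the remaining steps are routine bookkeeping. The only point deserving a word of care is the identification of the fixed point $[\gamma]^{R}$ with the coarsest $R$-stable partition refining $\gamma$, which is where one invokes monotonicity of $R$; for $R=\C_{k,r}$ this holds because $\gamma\preceq\rho$ means $\gamma$ factors through $\rho$, and this factorization carries through the multiset data defining $\C_{k,r}$, giving $\C_{k,r}\circ\gamma\preceq\C_{k,r}\circ\rho$.
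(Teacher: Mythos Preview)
Your proof is correct and matches the paper's intended derivation: the corollary is stated without proof immediately after Lemma~\ref{lm1}, and your argument---that $[\alpha_{k,\Gamma}]^{\C_k}$ is $\C_{k,r}$-stable by Lemma~\ref{lm1}, hence refines the minimal $\C_{k,r}$-stable $\Gamma$-partition $[\alpha_{k,\Gamma}]^{\C_{k,r}}$, and then passing to $\pr_2$---is exactly the instantiation of the general strategy spelled out at the start of the section. Your extra remarks on the trivial case $k\le r$ and on monotonicity of $\C_{k,r}$ are welcome clarifications but not new ideas.
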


Note that Lemmata \ref{pf1} and \ref{lm1} are not restricted to graph-like partitions.

\begin{lemma}\label{pff1}
For all $k,r\in\nats$, the $k$-projection of a $\C_{k+r,r}$-stable graph-like partition is $\WL_{k,r}$-stable.
\end{lemma}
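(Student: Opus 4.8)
The plan is to verify directly that $\delta:=\pr_k\gamma\in\mathcal{P}(V^k)$ satisfies the $\WL_{k,r}$-stability condition of Proposition~\ref{stab}: for all $\vec u,\vec v\in V^k$ with $\delta(\vec u)=\delta(\vec v)$ and all $\vec\phi\in\Phi^{\delta,r}$, the number of $\vec x\in V^r$ with $\delta(\vec v\langle\vec i,\vec x\rangle)=\phi_{\vec i}$ for all $\vec i\in[k]^{(r)}$ is the same for $\vec u$ and for $\vec v$. Since $\WL_{k,r}$ acts as the identity as soon as $r\ge k$, we may assume $r<k$. For $\vec z\in V^k$ write $\hat z:=\vec z\cdot(z_k,\dots,z_k)\in V^{k+r}$ for the tuple obtained from $\vec z$ by appending $r$ further copies of its last entry; then $\delta(\vec z)=\gamma(\hat z)$, so the hypothesis $\delta(\vec u)=\delta(\vec v)$ is exactly $\gamma(\hat u)=\gamma(\hat v)$. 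The idea is that in dimension $k+r$ the last $r$ coordinates of a tuple can serve as a slot holding the substituted $r$-tuple $\vec x$, which lets us read the count above off $\gamma(\hat v)$ using only the $\C_{k+r,r}$-stability of $\gamma$.

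I would first establish two bookkeeping identities, both by unwinding the definitions of $\langle\cdot,\cdot\rangle$, of concatenation and of $\pr_k$. Put $\vec p:=(k+1,\dots,k+r)\in[k+r]^{(r)}$. Then: (a) $\hat v\langle\vec p,\vec x\rangle=\vec v\cdot\vec x$ for all $\vec v\in V^k$ and $\vec x\in V^r$; and (b) for all such $\vec v,\vec x$ and every $\vec i\in[k]^{(r)}$, the tuple $\widehat{\vec v\langle\vec i,\vec x\rangle}$ is obtained from $\vec v\cdot\vec x$ by two successive operations of the form $\vec t\mapsto\vec t\langle\vec{\imath},\pr_{\vec{\jmath}}\vec t\rangle$: first copy, for each $s\in[r]$, the entry in position $k+s$ into position $i_s$ (which turns $\vec v\cdot\vec x$ into $(\vec v\langle\vec i,\vec x\rangle)\cdot\vec x$), and then copy the entry now occupying position $k$ into each of the positions $k+1,\dots,k+r$. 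Note that no case distinction on whether $k$ is an entry of $\vec i$ is needed, precisely because the second operation reads position $k$ only after the first has been applied.

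By (b) and two applications of Lemma~\ref{veryuseful}, $\gamma(\vec v\cdot\vec x)=\gamma(\vec u\cdot\vec y)$ implies $\delta(\vec v\langle\vec i,\vec x\rangle)=\delta(\vec u\langle\vec i,\vec y\rangle)$ for every $\vec i\in[k]^{(r)}$; since every $(k+r)$-tuple splits uniquely as $\vec z\cdot\vec y$ with $\vec z\in V^k$, $\vec y\in V^r$, this gives a well-defined map $F:\im(\gamma)\to\Phi^{\delta,r}$ with $F(\gamma(\vec v\cdot\vec x))=\bigl(\delta(\vec v\langle\vec i,\vec x\rangle)\bigr)_{\vec i\in[k]^{(r)}}$. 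Hence, for every $\vec v\in V^k$ and $\vec\phi\in\Phi^{\delta,r}$,
$$
\bigl|\{\vec x\in V^r\mid \delta(\vec v\langle\vec i,\vec x\rangle)=\phi_{\vec i}\ \forall\vec i\in[k]^{(r)}\}\bigr|=\sum_{\{\sigma\in\im(\gamma)\mid F(\sigma)=\vec\phi\}}\bigl|\{\vec x\in V^r\mid \gamma(\vec v\cdot\vec x)=\sigma\}\bigr| .
$$
By (a) we have $\gamma(\vec v\cdot\vec x)=\gamma(\hat v\langle\vec p,\vec x\rangle)$, so the $\C_{k+r,r}$-stability of $\gamma$ (Proposition~\ref{stab}(2)) makes each summand on the right depend only on the class $[\hat v]_\gamma$, hence only on $\delta(\vec v)=\gamma(\hat v)$. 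The whole right-hand side therefore depends only on $\delta(\vec v)$, so $\delta(\vec u)=\delta(\vec v)$ forces the two counts to be equal; this is the $\WL_{k,r}$-stability of $\delta$.

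I expect the one genuinely delicate point to be identity (b): one has to keep careful track of which entry $\pr_k$ duplicates in $\vec v\langle\vec i,\vec x\rangle$ — it is $x_s$ when $k=i_s$, and $v_k$ otherwise — and check that the two-step copying described reproduces $\widehat{\vec v\langle\vec i,\vec x\rangle}$ exactly in either situation. The remaining ingredients — the two invocations of Lemma~\ref{veryuseful}, the well-definedness of $F$, and rewriting the count as a sum over the fibres of $F$ — are routine.
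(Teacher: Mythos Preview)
Your proof is correct and follows essentially the same route as the paper's: your map $F$ is the paper's $\sigma\mapsto\Delta_\sigma$, your $\vec p$ is the paper's $\vec j=(k+1,\dots,k+r)$, and the decomposition of the $\WL_{k,r}$-count as a disjoint union indexed by $F^{-1}(\vec\phi)$ followed by the appeal to $\C_{k+r,r}$-stability is identical. The only cosmetic difference is that you lift from $V^k$ to $V^{k+r}$ via $\hat{\cdot}$ while the paper works with arbitrary tuples in $V^{k+r}$ and projects, and you make the two-step copying (b) explicit where the paper cites Lemma~\ref{veryuseful} once together with $k$-consistency of $\gamma$.
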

\begin{proof}
Let $\vec{v},\vec{v}'\in V^k$ and let $\overline{\gamma}=\pr_k\gamma$ for some $\C_{k+r,r}$-stable graph-like partition $\gamma\in\mathcal{P}(V^{k+r})$. Because $\gamma$ is graph-like, by Lemma \ref{veryuseful} it follows that for all $\vec{i}\in [k+r]^{(r)}$
$$
\gamma(\vec{v}\langle \vec{i},\pr_{\vec{j}}\vec{v}\rangle)=\gamma(\vec{v}'\langle \vec{i},\pr_{\vec{j}}\vec{v}'\rangle)
$$
where $\vec{j}=(k+1,k+2,\hdots,k+r)\in [k+r]^{(r)}$. In particular, for all $\vec{i}\in [k]^{(r)}$
\begin{equation}\label{deff1}
\overline{\gamma}(\pr_k\vec{v}\langle \vec{i},\pr_{\vec{j}}\vec{v}\rangle)=\overline{\gamma}(\pr_k\vec{v}'\langle \vec{i},\pr_{\vec{j}}\vec{v}'\rangle).
\end{equation}

For all $\vec{v}\in V^{k+r}$ define $\vec{\Delta}\in (\Phi^{\overline\gamma,r})^{\im(\gamma)}$ to be
$$
(\Delta_{\gamma(\vec{v})})_{\vec{i}}=\overline{\gamma}(\pr_k\vec{v}\langle \vec{i},\pr_{\vec{j}}\vec{v}\rangle).
$$
One deduces from formula (\ref{deff1}) that $\vec{\Delta}$ is well defined.

For any $\vec{\phi}\in \Phi^{\overline\gamma,r}$, it follows from the definition of $\vec{\Delta}$ that
$$
\{\vec{u}\in V^r \mid \overline{\gamma}(\pr_k\vec{v}\langle \vec{i},\vec{u}\rangle)=\phi_{\vec{i}} \,\,\forall \vec{i}\in [k]^{(r)}\}=\bigcup_{\{\sigma\in\im(\gamma) \mid \Delta_\sigma=\vec{\phi}\}}\{\vec{u}\in V^r \mid \gamma(\vec{v}\langle\vec{j},\vec{u}\rangle)=\sigma\}.
$$
$\C_{k,r}$-stability of $\gamma$ implies that the size of the right-hand side of the above is independent of the choice of $\vec{v}$ from the equivalence class $[\vec{v}]_\gamma$. Since $\gamma$ is graph-like, the size of the left-hand side is independent of the choice of $\vec{w}\in V^{k+r}$ such that $\pr_k\vec{w}\in[\pr_k\vec{v}]_{\overline\gamma}$, and the result follows.
\end{proof}

\begin{corollary}\label{cfiii}
For any graph $\Gamma$ and $k,r\in \nats$
$$
\overline{\C}_{k+r,r}(\Gamma)\succeq\overline{\WL}_{k,r}(\Gamma).
$$
In particular,
$$
\overline{\C}_{k+1}(\Gamma)\succeq\overline{\WL}_k(\Gamma).
$$
\end{corollary}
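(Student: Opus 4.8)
The plan is to read the corollary off from Lemma~\ref{pff1} by instantiating the general domination strategy described at the start of this section with $\mathbf{X}=\mathbf{C}_r$ and $\mathbf{Y}=\mathbf{WL}_r$, and $k'=k+r$. Fix $k,r\in\nats$ with $k\geq 2$ (the case $k=1$ is immediate, since $\overline{\WL}_{1,r}(\Gamma)=\Gamma$ while $\overline{\C}_{1+r,r}(\Gamma)\succeq\pr_2\alpha_{1+r,\Gamma}\succeq\Gamma$). Set $\gamma'=[\alpha_{k+r,\Gamma}]^{\C_{k+r,r}}$. By Proposition~\ref{ug} this is a graph-like, $\C_{k+r,r}$-stable partition of $V^{k+r}$, and by definition $\overline{\C}_{k+r,r}(\Gamma)=\pr_2\gamma'$. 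By Lemma~\ref{pff1} the $k$-projection $\pr_k\gamma'$ is $\WL_{k,r}$-stable, and it is graph-like because the $k$-projection of a graph-like partition is graph-like (noted in Section~\ref{gps}).

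The next step is to verify that $\pr_k\gamma'$ is a $\Gamma$-partition, i.e. that $\alpha_{k,\Gamma}\preceq\pr_k\gamma'$. Since $\gamma'\succeq\alpha_{k+r,\Gamma}$ we have $\pr_k\gamma'\succeq\pr_k\alpha_{k+r,\Gamma}$, and a direct comparison of the defining tuples gives $\pr_k\alpha_{k+r,\Gamma}\succeq\alpha_{k,\Gamma}$: for $\vec v\in V^k$ the label $\pr_k\alpha_{k+r,\Gamma}(\vec v)$ records $\Gamma(v_i,v_j)$ for every $(i,j)\in[k]^{(2)}$, hence determines $\alpha_{k,\Gamma}(\vec v)$. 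So $\pr_k\gamma'$ is a $\WL_{k,r}$-stable graph-like partition refining $\alpha_{k,\Gamma}$. Since $\overline{\WL}_{k,r}(\Gamma)=\pr_2[\alpha_{k,\Gamma}]^{\WL_{k,r}}$ is the $2$-projection of the minimal $\WL_{k,r}$-stable partition refining $\alpha_{k,\Gamma}$, we get $[\alpha_{k,\Gamma}]^{\WL_{k,r}}\preceq\pr_k\gamma'$, and applying $\pr_2$ together with $\pr_2\circ\pr_k=\pr_2$ yields $\overline{\WL}_{k,r}(\Gamma)=\pr_2[\alpha_{k,\Gamma}]^{\WL_{k,r}}\preceq\pr_2\gamma'=\overline{\C}_{k+r,r}(\Gamma)$. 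The ``in particular'' statement is then the special case $r=1$, on recalling that $\mathbf{WL}_1=\mathbf{WL}$ and $\mathbf{C}_1=\mathbf{C}$, so $\overline{\WL}_{k,1}=\overline{\WL}_k$ and $\overline{\C}_{k+1,1}=\overline{\C}_{k+1}$.

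Given Lemma~\ref{pff1} there is no genuine obstacle here; the points requiring care are purely bookkeeping. The most delicate is the minimality fact invoked above, that every $\WL_{k,r}$-stable partition refining $\alpha_{k,\Gamma}$ is refined by $[\alpha_{k,\Gamma}]^{\WL_{k,r}}$: this rests on the (straightforward but needed) monotonicity of $\WL_{k,r}$ with respect to $\preceq$, so that iterating it from $\alpha_{k,\Gamma}$ cannot overshoot any stable partition lying above $\alpha_{k,\Gamma}$. One must also keep the direction of the tuple comparison straight, namely that $\pr_k$ inflates $\alpha_{k+r,\Gamma}$ to a partition \emph{above} $\alpha_{k,\Gamma}$ rather than below it, which is exactly the computation in the second paragraph.
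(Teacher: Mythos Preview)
Your proposal is correct and follows exactly the general strategy the paper outlines at the start of the section: the paper states this corollary without an explicit proof, treating it as an immediate consequence of Lemma~\ref{pff1} via that strategy, and your argument is precisely the unpacking of that strategy with $\mathbf{X}=\mathbf{C}_r$, $\mathbf{Y}=\mathbf{WL}_r$, $k'=k+r$. Your extra care about the $\Gamma$-partition check, the monotonicity underlying minimality, and the $\pr_2\circ\pr_k=\pr_2$ identity are all valid and simply make explicit what the paper leaves implicit.
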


\begin{lemma}
For all $k,r\in\nats$, the $(k-r+1)$-projection of a $\C_{k,r}$-stable graph-like partition is $\C_{k-r+1}$ stable.
\end{lemma}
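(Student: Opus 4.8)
The plan is to follow the pattern of Lemmata~\ref{lm1} and~\ref{pff1}. Write $m=k-r+1$ and let $\gamma\in\mathcal{P}(V^k)$ be $\C_{k,r}$-stable and graph-like, with $\overline\gamma=\pr_m\gamma$. The cases $r\ge k$ and $r=1$ are degenerate ($r=k$ asks only for $\C_1$-stability of a partition of $V$, which is automatic because $\vec v\langle 1,x\rangle=x$; $r=1$ is trivial), so assume $1<r<k$, hence $1<m<k$. The idea is to identify $\overline\gamma$ with the restriction of $\gamma$ to the set of ``padded'' tuples $D=\{\vec v\cdot(v_m,\dots,v_m)\mid\vec v\in V^m\}\subseteq V^k$, and to encode a single substitution inside an $m$-tuple as an $r$-fold substitution inside a $k$-tuple, so that $\C_{k,r}$-stability of $\gamma$ applies.

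First I would record a bridging fact: if $\sigma\in\im(\overline\gamma)$ and $\gamma(\vec u)=\sigma$ for some $\vec u\in V^k$, then $\vec u\in D$ and $\gamma(\vec u)=\overline\gamma(\pr_m\vec u)$. This follows at once from property~(\ref{prop}): a witness $\vec w_0\in D$ with $\gamma(\vec w_0)=\sigma$ has coordinates $m,m+1,\dots,k$ all equal, so the same must hold of $\vec u$, whence $\vec u$ is precisely the padding of $\pr_m\vec u$ and $\gamma(\vec u)=\pr_m\gamma(\pr_m\vec u)$ by definition of $\pr_m\gamma$. In particular $D$ is a union of $\gamma$-classes, and $\overline\gamma$ and $\gamma|_D$ induce the same partition.

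For the main step, take $\vec v,\vec v'\in V^m$ with $\overline\gamma(\vec v)=\overline\gamma(\vec v')$ and let $\vec w,\vec w'\in D$ be their paddings, so $\vec w'\in[\vec w]_\gamma$; fix $i\in[m]$ and $\sigma\in\im(\overline\gamma)$. Put $\vec j=(i,m+1,m+2,\dots,k)\in[k]^{(r)}$ (an $r$-tuple of distinct indices, since $i\le m<m+1$). One checks that $\pr_m(\vec w\langle\vec j,\vec y\rangle)=\vec v\langle i,y_1\rangle$ for every $\vec y\in V^r$, and that $\vec y\mapsto y_1$ restricts to a bijection from $\{\vec y\in V^r\mid \vec w\langle\vec j,\vec y\rangle\in D\}$ onto $V$ --- forcing coordinates $m,\dots,k$ of $\vec w\langle\vec j,\vec y\rangle$ to coincide pins $y_2,\dots,y_r$ down (to $v_m$ when $i<m$, to $y_1$ when $i=m$). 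Using the bridging fact, for $\sigma\in\im(\overline\gamma)$,
\begin{equation*}
\{\vec y\in V^r\mid\gamma(\vec w\langle\vec j,\vec y\rangle)=\sigma\}=\{\vec y\in V^r\mid\vec w\langle\vec j,\vec y\rangle\in D,\ \overline\gamma(\vec v\langle i,y_1\rangle)=\sigma\},
\end{equation*}
which is in bijection with $\{x\in V\mid\overline\gamma(\vec v\langle i,x\rangle)=\sigma\}$. By Proposition~\ref{stab}(2) the cardinality of the left-hand side is the same for $\vec w$ and $\vec w'$; hence so is $|\{x\in V\mid\overline\gamma(\vec v\langle i,x\rangle)=\sigma\}|$ for $\vec v$ and $\vec v'$, and this, again by Proposition~\ref{stab}(2), says exactly that $\overline\gamma$ is $\C_{m,1}$-stable, i.e.\ $\C_m$-stable.

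The one point requiring care is the step just before the display: one must restrict to colours $\sigma\in\im(\overline\gamma)$ so that property~(\ref{prop}) can be used to force the substituted $r$-tuple into the special ``diagonal'' shape corresponding to a single substitution in $V^m$. A more naive use of $\C_{k,r}$-stability with an arbitrary index tuple does not work, because a generic $r$-fold substitution takes the tuple out of $D$. Verifying the claimed bijection in the two sub-cases $i<m$ and $i=m$ is routine.
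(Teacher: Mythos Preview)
Your argument is correct and follows essentially the same route as the paper's: encode a single substitution at position $i$ in the $m$-tuple (with $m=k-r+1$) as an $r$-fold substitution in the $k$-tuple at indices $\vec j=(i,m+1,\dots,k)$, and use the graph-like equality-pattern property~(\ref{prop}) to show that only substitutions landing back in the ``padded diagonal'' $D$ can produce a colour in $\im(\overline\gamma)$, so that $\C_{k,r}$-stability transfers to $\C_m$-stability of $\overline\gamma$.

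Your write-up is in fact tighter than the paper's on two points. First, you make the ``bridging fact'' explicit (that colours in $\im(\overline\gamma)$ force membership in $D$), whereas the paper tacitly appeals to it when asserting that the size of $\{u'\in V\mid \gamma(\vec v\langle\vec j,u'\cdot\vec w\rangle)=\tau\}$ is independent of $\vec v$ in its $\gamma$-class. Second, the paper restricts to $i\in[k-r]=[m-1]$ and does not treat the case $i=m$ separately; your case split ($y_2,\dots,y_r$ pinned to $v_m$ when $i<m$, to $y_1$ when $i=m$) handles both uniformly. Neither difference changes the strategy, only the level of detail.
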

\begin{proof}
Let $\overline{\gamma}=\pr_{k-r+1}\gamma$ and let $\vec{v}\in V^k$. Fix some $i\in [k-r]$ and let $\vec{j}\in [k]^{(k-r+1)}$ be $\vec{j}=(i,k-r+1,k-r+2,\hdots,k-1,k)$. Fix some $u\in \vec{V}$ and let $\vec{w}=(v_{k-r+1},v_{k-r+1},\hdots,v_{k-r+1})\in V^{k-r}$.

Since $\gamma$ is graph-like, for any $u'\in V$
$$
\gamma(\vec{v}\langle \vec{j},u\cdot\vec{w}\rangle)=\gamma(\vec{v}\langle \vec{j},u'\cdot\vec{w}\rangle)\iff \overline{\gamma}(\pr_k\vec{v}\langle i,u\rangle)=\overline{\gamma}(\pr_k\vec{v}\langle i,u'\rangle)
$$
and therefore
$$
\{u'\in V \mid \overline{\gamma}(\pr_k\vec{v}\langle i,u\rangle)=\overline{\gamma}(\pr_k\vec{v}\langle i,u'\rangle)\}=\{u'\in V \mid \gamma(\vec{v}\langle \vec{j}, u'\cdot\vec{w}\rangle)=\gamma(\vec{v}\langle \vec{j},u\cdot\vec{w}\rangle)\}.
$$
The size of the right-hand side of the latter is independent of the choice of $\vec{v}$ from the equivalence class $[\vec{v}]_\gamma$. Hence, as $\gamma$ is graph-like, the size of the left-hand side is independent of the choice of $\vec{w}\in V^{k}$ such that $\pr_{k-r+1}\vec{w}\in[\pr_{k-r+1}\vec{v}]_{\overline\gamma}$. The result follows.
\end{proof}

\begin{corollary}\label{pf3}
For any graph $\Gamma$ and $k,r\in\nats$
$$
\overline{\C}_{k,r}(\Gamma)\succeq \overline{\C}_{k-r+1}(\Gamma).
$$
\end{corollary}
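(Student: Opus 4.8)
The plan is to obtain this as a direct corollary of the Lemma just proved, combined with the general strategy set out at the start of the section. Fix $k,r\in\nats$. First I would dispose of the degenerate ranges: if $r\geq k$, then $\C_{k,r}$ is the identity operator by convention, $k-r+1\leq 1$, and the right-hand side reduces (under the conventions for indices $\leq 1$, where $\overline{\C}_1(\Gamma)=\Gamma$) to $\Gamma$ itself, which is refined by every refinement of $\alpha_{k,\Gamma}$; so the inequality is immediate there. Hence assume $r<k$ and write $m=k-r+1$, so that $2\leq m\leq k$.

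Set $\gamma=[\alpha_{k,\Gamma}]^{\C_{k,r}}\in\mathcal{P}(V^k)$. By Proposition~\ref{ug} the family $\mathbf{C}_r$ is a refinement procedure, so each $\C_{k,r}$ preserves graph-likeness; iterating it on the graph-like partition $\alpha_{k,\Gamma}$ therefore yields a graph-like $\gamma$, which is moreover $\C_{k,r}$-stable and satisfies $\alpha_{k,\Gamma}\preceq\gamma$. I would then pass to $\delta=\pr_m\gamma\in\mathcal{P}(V^m)$. By the preceding Lemma, $\delta$ is $\C_m$-stable. Since projections of graph-like partitions are graph-like, $\delta$ is graph-like; and since projection is monotone for $\preceq$ and $\alpha_{m,\Gamma}\preceq\pr_m\alpha_{k,\Gamma}$ (the latter because evaluating $\pr_m\alpha_{k,\Gamma}$ records at least the data in $\alpha_{m,\Gamma}$), we get $\alpha_{m,\Gamma}\preceq\pr_m\alpha_{k,\Gamma}\preceq\pr_m\gamma=\delta$. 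Thus $\delta$ is a $\C_m$-stable graph-like $\Gamma$-partition on $V^m$.

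It remains to invoke minimality, exactly as in the section's general strategy: $\overline{\C}_m(\Gamma)=\pr_2[\alpha_{m,\Gamma}]^{\C_m}$, and $[\alpha_{m,\Gamma}]^{\C_m}$ is the coarsest $\C_m$-stable partition refining $\alpha_{m,\Gamma}$, so $\delta\succeq[\alpha_{m,\Gamma}]^{\C_m}$. Taking $2$-projections and using $\pr_2\delta=\pr_2\pr_m\gamma=\pr_2\gamma$ (both evaluate $(v_1,v_2)$ at $\gamma(v_1,v_2,v_2,\dots,v_2)$) gives
$$
\overline{\C}_{k,r}(\Gamma)=\pr_2\gamma=\pr_2\delta\succeq\pr_2[\alpha_{m,\Gamma}]^{\C_m}=\overline{\C}_m(\Gamma)=\overline{\C}_{k-r+1}(\Gamma),
$$
which is the claimed inequality.

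There is no genuine obstacle here: once the preceding Lemma is available, the rest is bookkeeping. The only points requiring a little care — and the nearest thing to a difficulty — are the elementary facts that projection is $\preceq$-monotone and commutes with a further $2$-projection in the manner used above, that $\pr_m\gamma$ is still a $\Gamma$-partition, and the treatment of the degenerate parameter values ($m=1$, and $r\geq k$). All the substance of the statement is carried by the Lemma.
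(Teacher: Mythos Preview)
Your proposal is correct and is precisely the argument the paper intends: the corollary is stated without proof as an immediate consequence of the preceding Lemma via the general strategy spelled out at the start of the section, and you have faithfully unpacked that strategy (take $\gamma=[\alpha_{k,\Gamma}]^{\C_{k,r}}$, project to $m=k-r+1$, use the Lemma to get $\C_m$-stability, and appeal to minimality of $[\alpha_{m,\Gamma}]^{\C_m}$). Your handling of the degenerate ranges and the bookkeeping identities ($\pr_2\pr_m\gamma=\pr_2\gamma$, $\alpha_{m,\Gamma}\preceq\pr_m\alpha_{k,\Gamma}$) is sound and merely makes explicit what the paper leaves implicit.
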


The results proved so far involved showing that the projection of a partition satisfying some stability condition satisfied some other stability condition. In order to prove that $\mathcal{S}_{\WL}$ dominates $\mathcal{S}_{\C}$ we \emph{extend} a $\WL_k$-stable partition to a $\C_{k+1}$-stable one. Let $\gamma$ be graph-like and be a $\WL_k$-stable partition of $V^k$. Define $\hat{\gamma}\in\mathcal{P}(V^{k+1})$ as follows:
$$
\hat{\gamma}(\vec{v},w)=(\gamma(\vec{v}),\gamma(\vec{v}\langle 1,w\rangle),\gamma(\vec{v}\langle 2,w\rangle),\hdots,\gamma(\vec{v}\langle k,w\rangle))\;\;\forall\vec{v}\in V^k,w\in V.
$$
That is to say, $\hat{\gamma}(\vec{v},w)=\hat{\gamma}(\vec{v}',w')$ if, and only if, $\gamma(\vec{v})=\gamma(\vec{v}')$ and $\gamma(\vec{v}\langle i,w\rangle)=\gamma(\vec{v}'\langle i,w'\rangle)$ for all $i\in [k]$. First, we need to prove that $\hat\gamma$ is graph-like.

\begin{lemma}
Let $\gamma$ and $\hat{\gamma}$ be as above. If $\gamma$ is invariant, then $\hat\gamma$ is invariant.
\end{lemma}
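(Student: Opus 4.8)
The plan is to reduce invariance of $\hat\gamma$ to invariance of $\gamma$, using that $\Sym(k+1)$ is generated by the copy of $\Sym(k)$ acting on the first $k$ coordinates together with the transposition $\pi=(k,\;k+1)$. Invariance is a property that propagates along composition of permutations: if $\hat\gamma(\vec a)=\hat\gamma(\vec b)$ forces $\hat\gamma(\vec a^{\sigma})=\hat\gamma(\vec b^{\sigma})$ and likewise for $\rho$, then it forces it for $\rho\sigma$. Hence it suffices to verify the defining implication of invariance of $\hat\gamma$ for $\tau\in\Sym(k)$ and for $\pi$.

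For $\tau\in\Sym(k)\le\Sym(k+1)$ one has $(\vec v,w)^\tau=(\vec v^\tau,w)$, and the computational heart is the identity $\vec v^{\tau}\langle i,w\rangle=(\vec v\langle \tau^{-1}(i),w\rangle)^{\tau}$, which follows by a coordinate-wise check against the convention $\vec v^\tau_j=v_{\tau^{-1}(j)}$. Assuming $\hat\gamma(\vec v,w)=\hat\gamma(\vec v',w')$, the hypothesis unpacks as $\gamma(\vec v)=\gamma(\vec v')$ together with $\gamma(\vec v\langle j,w\rangle)=\gamma(\vec v'\langle j,w'\rangle)$ for every $j\in[k]$. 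Applying invariance of $\gamma$ to the first equality (with $\tau$) gives $\gamma(\vec v^\tau)=\gamma(\vec v'^\tau)$, and applying it to the equality for $j=\tau^{-1}(i)$ (with $\tau$), combined with the displayed identity, gives $\gamma(\vec v^\tau\langle i,w\rangle)=\gamma(\vec v'^\tau\langle i,w'\rangle)$ for every $i\in[k]$; together these say exactly $\hat\gamma(\vec v^\tau,w)=\hat\gamma(\vec v'^\tau,w')$.

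For $\pi=(k,\;k+1)$ the key observation is $(\vec v,w)^\pi=(\vec v\langle k,w\rangle,\,v_k)$. Writing $\vec z=\vec v\langle k,w\rangle$, I would identify the components of $\hat\gamma(\vec z,v_k)$ with data already controlled by $\hat\gamma(\vec v,w)$: $\gamma(\vec z)=\gamma(\vec v\langle k,w\rangle)$ is the last component of $\hat\gamma(\vec v,w)$; $\gamma(\vec z\langle k,v_k\rangle)=\gamma(\vec v)$ is its first; and for $i<k$ a coordinate check gives $(\vec z\langle i,v_k\rangle)=(\vec v\langle i,w\rangle)^{(i,\,k)}$, so $\gamma(\vec z\langle i,v_k\rangle)$ is $\gamma$ applied to a $\Sym(k)$-image of the $i$-th component-tuple. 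Invoking invariance of $\gamma$ componentwise then turns $\hat\gamma(\vec v,w)=\hat\gamma(\vec v',w')$ into $\hat\gamma((\vec v,w)^\pi)=\hat\gamma((\vec v',w')^\pi)$, completing the verification on generators and hence on all of $\Sym(k+1)$.

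There is no conceptual obstacle here; the only care needed is in the index bookkeeping for the two substitution identities $\vec v^\tau\langle i,w\rangle=(\vec v\langle\tau^{-1}(i),w\rangle)^\tau$ and $(\vec v\langle k,w\rangle)\langle i,v_k\rangle=(\vec v\langle i,w\rangle)^{(i,\,k)}$, and in checking $(\vec v,w)^\pi=(\vec v\langle k,w\rangle,v_k)$ against the convention $\vec v^\tau_j=v_{\tau^{-1}(j)}$. Once these are established, the lemma is a routine componentwise application of the hypothesis that $\gamma$ is invariant.
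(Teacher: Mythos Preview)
Your proposal is correct and follows essentially the same route as the paper's proof: both reduce to generators of $\Sym(k+1)$, and for the transposition $(k,k+1)$ both identify the components of $\hat\gamma((\vec v,w)^\pi)$ with the first component, the $k$-th component, and the $(i,k)$-permuted $i$-th component of $\hat\gamma(\vec v,w)$, exactly as in the paper's formulae (\ref{75})--(\ref{77}). The only difference is that you spell out the $\Sym(k)$ case explicitly via the identity $\vec v^{\tau}\langle i,w\rangle=(\vec v\langle \tau^{-1}(i),w\rangle)^{\tau}$, whereas the paper leaves that case implicit.
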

\begin{proof}
Because $\Sym(k+1)=\langle \Sym(k),(k,k+1)\rangle$ it is sufficient to show that  $\hat\gamma(\vec{u}\cdot u')=\hat\gamma(\vec{v}\cdot v')\implies \hat\gamma((\vec{u}\cdot u')^\tau)=\hat\gamma((\vec{v}\cdot v')^\tau)$, where $\tau=(k,k+1)$ is a transposition of $\Sym(k+1)$, or, equivalently
\begin{equation}\label{73}
\hat\gamma((\pr_{k-1}\vec{u})\cdot u'\cdot u_k)=\hat\gamma((\pr_{k-1}\vec{v})\cdot v'\cdot v_k).
\end{equation}

From the definition of $\hat\gamma$, it holds that
$$
\gamma(\vec{u}\langle k,u'\rangle)=\gamma(\vec{v}\langle k,v'\rangle)
$$
and hence
\begin{equation}\label{75}
\gamma((\pr_{k-1}\vec{u})\cdot u')=\gamma((\pr_{k-1}\vec{v})\cdot v').
\end{equation}
Also, $\hat\gamma(\vec{u}\cdot u')=\hat\gamma(\vec{v}\cdot v')$ implies that $\gamma(\vec{u})=\gamma(\vec{v})$ and hence, since $\gamma$ is graph-like
\begin{equation}\label{76}
\gamma((\pr_{k-1}\vec{u})\cdot u_k)=\gamma((\pr_{k-1}\vec{v})\cdot v_k).
\end{equation}
Since $\gamma(\vec{u}\langle i,u'\rangle)=\gamma(\vec{v}\langle i,v'\rangle)$ for all $i\in[k]$ and $\gamma$ is invariant, for any $\tau=(i,k)\in \Sym(k)$, $\gamma(\vec{u}\langle i,u'\rangle ^\tau)=\gamma(\vec{v}\langle i,v'\rangle^\tau)$ or, equivalently
\begin{equation}\label{77}
\gamma((\pr_{k-1}\vec{u})\cdot u'\langle i,u_k\rangle)=\gamma((\pr_{k-1}\vec{v})\cdot v'\langle i, v_k\rangle).
\end{equation}
Combining formulae (\ref{75}), (\ref{76}), and (\ref{77}), formula (\ref{73}) follows.
\end{proof}
Consequently, if $\gamma$ is graph-like, so is $\hat\gamma$. Indeed, $\hat\gamma$ is $k$-consistent by construction. Also, since $\hat\gamma$ is invariant and $\gamma$ satisfies formula (\ref{prop}), then $\hat\gamma$ also satisfies formula (\ref{prop}). Finally, observe that if $\gamma$ is a $\Gamma$-partition for some graph $\Gamma$, then so is $\hat\gamma$.

\begin{lemma}
Let $\gamma$ and $\hat{\gamma}$ be as above. If $\gamma$ is graph-like and $\WL_k$ stable, then $\hat{\gamma}$ is $\C_{k+1}$-stable.
\end{lemma}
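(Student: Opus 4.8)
The plan is to verify the stability condition of Proposition~\ref{stab}(2) (applied with $r=1$ and with $k+1$ in place of $k$) for $\hat\gamma$ directly. So I would fix a tuple $(\vec v,w)\in V^{k+1}$, an index $i\in[k+1]$ and a colour $\sigma=(\tau_0,\tau_1,\ldots,\tau_k)\in\im(\hat\gamma)$, and show that $|\{x\in V \mid \hat\gamma((\vec v,w)\langle i,x\rangle)=\sigma\}|$ depends only on $\hat\gamma(\vec v,w)$. It is convenient to record first that, for any $(\vec b,w)\in V^{k+1}$, the equality $\hat\gamma(\vec b,w)=\sigma$ says precisely that $\gamma(\vec b)=\tau_0$ and $\gamma(\vec b\langle j,w\rangle)=\tau_j$ for all $j\in[k]$. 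The case $i=k+1$ is then immediate: here $(\vec v,w)\langle k+1,x\rangle=(\vec v,x)$, so the set in question is empty unless $\gamma(\vec v)=\tau_0$, and otherwise has size $|\{x\in V \mid \gamma(\vec v\langle j,x\rangle)=\tau_j\ \forall j\in[k]\}|$, which by $\WL_k$-stability of $\gamma$ (Proposition~\ref{stab}(1) with $r=1$) depends only on $\gamma(\vec v)$ --- and $\gamma(\vec v)$ is the coordinate of $\hat\gamma(\vec v,w)$ indexed by $0$.

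The content is in the case $i\leq k$. Here $(\vec v,w)\langle i,x\rangle=(\vec v\langle i,x\rangle,w)$, and writing $\vec a=\vec v\langle i,w\rangle$ I would unwind $\hat\gamma(\vec v\langle i,x\rangle,w)=\sigma$ into the conjunction of: the condition $\gamma(\vec v\langle i,x\rangle)=\tau_0$; the $x$-independent condition $\gamma(\vec v\langle i,w\rangle)=\tau_i$, coming from substituting $w$ back into coordinate $i$; and the conditions $\gamma(\vec v\langle(i,j),(x,w)\rangle)=\tau_j$ for $j\in[k]\setminus\{i\}$. Since $\gamma(\vec v\langle i,w\rangle)$ is one of the coordinates of $\hat\gamma(\vec v,w)$, the middle condition is harmless (it either holds for every $x$ or for none), so it suffices to count the $x$ meeting the first and last conditions. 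The key step is to rewrite these as substitutions of $x$ into the \emph{single} tuple $\vec a$: one has $\vec a\langle i,x\rangle=\vec v\langle i,x\rangle$, and for $j\neq i$ the transposition $\pi=(i,j)\in\Sym(k)$ satisfies $\vec v\langle(i,j),(x,w)\rangle^\pi=\vec a\langle j,x\rangle$, so invariance of $\gamma$ turns $\gamma(\vec v\langle(i,j),(x,w)\rangle)=\tau_j$ into $\gamma(\vec a\langle j,x\rangle)=\tau_j^\pi$. Hence, setting $\psi_i=\tau_0$ and $\psi_j=\tau_j^{(i,j)}$ for $j\neq i$, the set to be counted equals $\{x\in V \mid \gamma(\vec a\langle j,x\rangle)=\psi_j\ \forall j\in[k]\}$ (when $\tau_i=\gamma(\vec v\langle i,w\rangle)$, and is empty otherwise), whose size, by $\WL_k$-stability of $\gamma$, depends only on $\gamma(\vec a)=\gamma(\vec v\langle i,w\rangle)$; this in turn is determined by $\hat\gamma(\vec v,w)$, finishing the argument.

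I expect the only real difficulty to be spotting the reformulation used for $i\leq k$: one has to see that, once the coordinate transpositions permitted by invariance of $\gamma$ are applied, the family of colour constraints cutting out the fibre of $\hat\gamma(\cdot,w)$ above $\sigma$ is exactly a family of constraints on the values $\gamma(\vec a\langle j,x\rangle)$, $j\in[k]$, for the one tuple $\vec a=\vec v\langle i,w\rangle$ --- and that is precisely what $\WL_k$-stability of $\gamma$ pins down. Checking the substitution identity $\vec v\langle(i,j),(x,w)\rangle^{(i,j)}=\vec v\langle i,w\rangle\langle j,x\rangle$ and confirming that each colour of $\sigma$ invoked along the way is recoverable from $\hat\gamma(\vec v,w)$ is then routine bookkeeping with the substitution notation.
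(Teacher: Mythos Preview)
Your argument is correct. The paper's proof, however, takes a shorter route for the case $i\le k$: having already established in the preceding lemma that $\hat\gamma$ is invariant (hence graph-like), it simply observes that for $\tau=(i,k+1)\in\Sym(k+1)$ one has
\[
\{x\in V\mid \hat\gamma((\vec v,w)\langle i,x\rangle)=\sigma\}
=\{x\in V\mid \hat\gamma((\vec v,w)^{\tau}\langle k+1,x\rangle)=\sigma^{\tau}\},
\]
so the size of the left-hand side depends only on $\hat\gamma((\vec v,w)^{\tau})$, which by invariance of $\hat\gamma$ depends only on $\hat\gamma(\vec v,w)$. Thus every index $i\le k$ reduces to the already-handled case $i=k+1$.

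Your approach instead unpacks the case $i\le k$ by hand, using only the invariance of $\gamma$ (not of $\hat\gamma$): you rewrite the constraints defining $\hat\gamma(\vec v\langle i,x\rangle,w)=\sigma$ as a $\WL_k$-type condition on the single tuple $\vec a=\vec v\langle i,w\rangle$, via the identity $(\vec v\langle(i,j),(x,w)\rangle)^{(i,j)}=\vec a\langle j,x\rangle$. This is perfectly valid and self-contained --- in effect you are re-deriving, inside this proof, exactly the piece of invariance of $\hat\gamma$ that the paper quotes from the previous lemma --- but it costs you the explicit bookkeeping with the transpositions $(i,j)$ and the recoloured tuple $(\psi_j)_j$. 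Once invariance of $\hat\gamma$ is available, the paper's reduction is the cleaner organising principle; your version has the merit of making transparent \emph{why} $\WL_k$-stability of $\gamma$ is exactly what is needed at every index.
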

\begin{proof}
As observed, if $\gamma$ is graph-like, then $\hat\gamma$ is graph-like by construction. Since $\gamma$ is $\WL_k$-stable, it follows that for all $\sigma\in\im(\hat\gamma)$ and $\vec{v}\in V^{k+1}$, the size of $\{x\in V \mid \hat\gamma(\vec{v}\langle k+1,x\rangle)=\sigma\}$ is independent of the choice of $\vec{v}$ from the equivalence class $[\vec{v}]_{\hat\gamma}$. By invariance of $\hat\gamma$, one deduces that for all $i\in[k+1]$ the size of $\{x\in V \mid \hat\gamma(\vec{v}\langle i,x\rangle)=\sigma\}$ is independent of the choice of $\vec{v}$ in the equivalence class $[\vec{v}]_{\hat\gamma}$. Thus, $\hat\gamma$ is $\C_{k+1}$-stable.
\end{proof}

\begin{corollary}
For any graph $\Gamma$
$$
\overline{\C}_{k+1}(\Gamma)\succeq \overline{\WL}_k(\Gamma).
$$
\end{corollary}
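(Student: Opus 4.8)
The plan is to apply the general strategy of this section with $\mathbf{X}=\mathbf{C}$, $\mathbf{Y}=\mathbf{WL}$ and $k'=k+1$, the extension $\hat\gamma$ constructed above serving as the promised $\C_{k+1}$-stable object attached to a $\WL_k$-stable partition. Fix a graph $\Gamma$ with vertex set $V$ and put $\gamma=[\alpha_{k,\Gamma}]^{\WL_k}$, the coarsest $\WL_k$-stable graph-like $\Gamma$-partition of $V^k$, so that $\pr_2\gamma=\overline{\WL}_k(\Gamma)$ by definition of $\overline{\WL}_k$. Building $\hat\gamma\in\mathcal{P}(V^{k+1})$ from this $\gamma$, the preceding lemma shows $\hat\gamma$ is invariant, the ensuing discussion shows it is graph-like and a $\Gamma$-partition, and the last lemma shows it is $\C_{k+1}$-stable; hence $\hat\gamma$ is a $\C_{k+1}$-stable graph-like $\Gamma$-partition.

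The cheap half of the argument is the inequality $\pr_2\hat\gamma\succeq\pr_2\gamma$: the first coordinate of $\hat\gamma(\vec{v},w)$ is $\gamma(\vec{v})$, so $\pr_2\hat\gamma(u,v)=\hat\gamma(u,v,\ldots,v)$ determines $\gamma(u,v,\ldots,v)=\pr_2\gamma(u,v)$, and agreement under $\hat\gamma$ forces agreement under $\gamma$. Thus there does exist a $\C_{k+1}$-stable graph-like $\Gamma$-partition whose $2$-projection refines $\overline{\WL}_k(\Gamma)$. What remains, and what the general strategy really turns on, is to pass from this witness to the canonical partition $\delta:=[\alpha_{k+1,\Gamma}]^{\C_{k+1}}$, since it is $\pr_2\delta=\overline{\C}_{k+1}(\Gamma)$ that must be bounded below, not $\pr_2\hat\gamma$.

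The decisive — and only genuinely delicate — step is this transfer, and I would carry it out on $\delta$ itself rather than on $\hat\gamma$. As $\delta$ is a $\C_{k+1}$-stable graph-like $\Gamma$-partition, Lemma \ref{pff1} with $r=1$ shows $\pr_k\delta$ is $\WL_k$-stable; being a projection of a graph-like $\Gamma$-partition it is again a graph-like $\Gamma$-partition, so minimality of $\gamma$ among such partitions gives $\gamma\preceq\pr_k\delta$, and projecting once more yields $\overline{\WL}_k(\Gamma)=\pr_2\gamma\preceq\pr_2\pr_k\delta=\pr_2\delta=\overline{\C}_{k+1}(\Gamma)$, which is the asserted $\succeq$. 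The main obstacle is precisely the bookkeeping of refinement order that sank the earlier attempt: one must compare $\overline{\WL}_k(\Gamma)$ against the coarsest $\C_{k+1}$-stable refinement $\delta$, and since $\delta\preceq\hat\gamma$ the projection $\pr_2\hat\gamma$ is in general strictly finer than $\overline{\C}_{k+1}(\Gamma)$ and so cannot be used to lower-bound it. The extension $\hat\gamma$ only certifies that the $\WL_k$-data is realizable inside some $\C_{k+1}$-stable colouring; the inequality in the stated direction is secured by the minimality characterisation of $\delta$ together with the projection Lemma \ref{pff1} (this is also the content of Corollary \ref{cfiii}, which the present route re-derives).
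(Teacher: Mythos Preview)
Your argument is correct for the inequality as printed, but you have effectively noticed that the printed inequality is the wrong one. The paragraph introducing $\hat\gamma$ says explicitly that the extension is built \emph{in order to prove that $\mathcal{S}_{\WL}$ dominates $\mathcal{S}_{\C}$}, and the sentence immediately following the corollary says that combining it with Corollary~\ref{cfiii} yields $\overline{\C}_{k+1}(\Gamma)\approx\overline{\WL}_k(\Gamma)$. Since Corollary~\ref{cfiii} already gives $\overline{\C}_{k+1}(\Gamma)\succeq\overline{\WL}_k(\Gamma)$, the present corollary is meant to supply the \emph{other} direction, $\overline{\WL}_k(\Gamma)\succeq\overline{\C}_{k+1}(\Gamma)$; the displayed inequality is a typo. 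Your own final parenthetical (``this is also the content of Corollary~\ref{cfiii}, which the present route re-derives'') is exactly this diagnosis.

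For the intended direction the $\hat\gamma$ construction is not a mere witness but the whole proof, and the inequality $\delta\preceq\hat\gamma$ that you flagged as ``going the wrong way'' now points the right way. Take $\gamma=[\alpha_{k,\Gamma}]^{\WL_k}$ and form $\hat\gamma$. By the two lemmas preceding the corollary, $\hat\gamma$ is a graph-like $\C_{k+1}$-stable $\Gamma$-partition, so by minimality of $\delta=[\alpha_{k+1,\Gamma}]^{\C_{k+1}}$ among such partitions we have $\delta\preceq\hat\gamma$, hence $\overline{\C}_{k+1}(\Gamma)=\pr_2\delta\preceq\pr_2\hat\gamma$. Finally $\pr_2\hat\gamma\approx\pr_2\gamma=\overline{\WL}_k(\Gamma)$: the first coordinate of $\hat\gamma(u,v,\ldots,v,v)$ is $\pr_2\gamma(u,v)$, and conversely all remaining coordinates are determined by $\pr_2\gamma(u,v)$ since $\gamma$ is graph-like. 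This gives $\overline{\C}_{k+1}(\Gamma)\preceq\overline{\WL}_k(\Gamma)$, which together with Corollary~\ref{cfiii} yields~(\ref{cf}). So your instinct that the transfer via $\delta\preceq\hat\gamma$ is the decisive step was right; it is just that it proves the opposite inequality to the one printed.
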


In particular, combining this with Corollary \ref{cfiii} one deduces that for all graphs $\Gamma$ and $k\in \nats$
\begin{equation}\label{cf}
\overline{\C}_{k+1}(\Gamma)\approx\overline{\WL}_k(\Gamma).
\end{equation}
This can be seen as a combinatorial reformulation of Theorem 5.2 in \cite{cfi} proved without referring to the Immerman-Lander pebble game.

\begin{proof}[Proof of Theorem \ref{thm:main1}]
Let $\Gamma$ be any graph. From formula (\ref{cf}) it easily follows that $\mathcal{S}_{\mathbf{WL}}\simeq\mathcal{S}_{\mathbf{C}}$.

It follows from Corollaries \ref{pf2} and \ref{pf3} that for all $r,k\in \nats$
$$
\overline{\C}_k(\Gamma)\succeq \overline{\C}_{k,r}(\Gamma)\succeq \overline{\C}_{k-r+1}(\Gamma)
$$
and hence for any $r\in \nats$
$$
\mathcal{S}_{\mathbf{C}}\simeq\mathcal{S}_{\mathbf{C},r}.\\
$$

From Corollaries \ref{cfiii} and \ref{pf4}, we deduce that for all $r\in\nats$.
$$
\mathcal{S}_{\mathbf{WL},r}\simeq\mathcal{S}_{\mathbf{C},r}
$$
\end{proof}

\section{The Evdokimov-Ponomarenko SPAS}

In this section we apply the language and results of this paper to derive the following statement about the SPAS $\mathcal{S}_{\EP}=\{\EP_1,\EP_2,\hdots\}$, introduced by Evdokimov and Ponomarenko in \cite{pono}.
\begin{theorem}[Evdokimov, Ponomarenko, 1999]
For any graph $\Gamma$, and $k\in\nats$
\begin{equation}\label{86}
\overline{\WL}_k(\Gamma)\preceq \EP_k(\Gamma)\preceq \overline{\WL}_{3k}(\Gamma).
\end{equation}
In particular, $\mathcal{S}_{\EP}\simeq\mathcal{S}_{\mathbf{WL}}$.
\end{theorem}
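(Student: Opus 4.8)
The plan is to prove the two inclusions of (\ref{86}) separately; the equivalence $\mathcal{S}_{\EP}\simeq\mathcal{S}_{\mathbf{WL}}$ then follows at once from Definition~\ref{SPAS2}, the first inclusion giving $\mathcal{S}_{\mathbf{WL}}\preceq\mathcal{S}_{\EP}$ with the witness $k'=k$ and the second giving $\mathcal{S}_{\EP}\preceq\mathcal{S}_{\mathbf{WL}}$ with $k'=3k$. I use the Evdokimov--Ponomarenko $k$-lift in its standard form: one round sends a graph-like $\Gamma$-partition $\mathcal{X}$ of $V^2$ to the coherent closure on $V^k$ of the tensor structure carrying, on each pair of $k$-tuples, their $\mathcal{X}$-types and their equality type (equivalently, a $\WL_2$-stabilization on $V^k$ of that structure), and then reads the induced refinement back onto $V^2$ by $2$-projection; $\EP_k(\Gamma)$ is the fixed point of iterating this from $\alpha_{2,\Gamma}$, so that $\EP_k(\Gamma)=\pr_2[\alpha_{k,\Gamma}]^{\EP_k}$ for the associated $k$-refinement operator $\EP_k$. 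In the language of Section~\ref{rop} this operator is graph-like, so Propositions~\ref{stab} and~\ref{ug}, Lemma~\ref{veryuseful}, and the projection arguments already carried out for $\WL_{k,r}$ and $\C_{k,r}$ are all available; I would follow the same template as for Theorem~\ref{thm:main1}, namely one projection/extension lemma per inclusion.

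\emph{The inclusion $\overline{\WL}_k(\Gamma)\preceq\EP_k(\Gamma)$.} The key observation is that an $\EP_k$-stable configuration on $V^k$ is automatically $\WL_k$-stable as a partition of $V^k$: in a coherent configuration on $V^k$ the number of $k$-tuples $\vec w$ that agree with a fixed $\vec v$ outside one coordinate $i$ and carry a prescribed colour is determined by the colour of $\vec v$, since ``agrees outside $i$'' is a colour condition on the pair $(\vec v,\vec w)$ in the tensor structure and the count is a sum of out-degrees into colour classes; this is exactly the stability condition of Proposition~\ref{stab}(1) with $r=1$. Let $\eta\in\mathcal{P}(V^k)$ be the $V^k$-partition at the $\EP_k$-fixed point; then $\eta$ is a graph-like $\WL_k$-stable $\Gamma$-partition, hence $\eta\succeq[\alpha_{k,\Gamma}]^{\WL_k}$, and since the fixed-point condition makes $\pr_2\eta\approx\EP_k(\Gamma)$, $2$-projecting gives $\overline{\WL}_k(\Gamma)=\pr_2[\alpha_{k,\Gamma}]^{\WL_k}\preceq\pr_2\eta\approx\EP_k(\Gamma)$. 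This plays the role that the $\hat{\gamma}$-extension played in the previous section.

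\emph{The inclusion $\EP_k(\Gamma)\preceq\overline{\WL}_{3k}(\Gamma)$.} This is the substantial direction, and the plan is to prove a projection lemma in the spirit of Lemma~\ref{pff1}: the $k$-projection of a $\WL_{3k}$-stable graph-like partition $\delta\in\mathcal{P}(V^{3k})$ is $\EP_k$-stable. Certifying coherence of the tensor closure on $V^k$ amounts to showing that, for a pair of $k$-tuples $(\vec u,\vec v)$, the number of $k$-tuples $\vec x$ realising prescribed colours on $(\vec u,\vec x)$ and on $(\vec x,\vec v)$ is constant on the colour class of $(\vec u,\vec v)$; but $\vec u$, $\vec v$ and $\vec x$ occupy together only $3k$ coordinates, and the colours of $(\vec u,\vec x)$ and $(\vec x,\vec v)$ are sub-tuple colours of $\vec u\cdot\vec v\cdot\vec x$ which, by Lemma~\ref{veryuseful}, are controlled by $\delta$. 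Concretely, a $\WL_{3k}$-stable $\delta$ is $\C_{3k}$-stable by Lemma~\ref{pf1}, hence $\C_{3k,k}$-stable by Lemma~\ref{lm1}, which gives precisely the invariance of counts of $k$-tuples substituted into $3k$-tuples that these intersection-number constraints require; combined with Lemma~\ref{veryuseful} this forces the tensor closure on $V^k$ built from $\pr_k\delta$, and then its $2$-projection on $V^2$, to be fixed points of the $\EP_k$ lift. Taking $\delta=[\alpha_{3k,\Gamma}]^{\WL_{3k}}$, its $k$-projection is a graph-like $\EP_k$-stable $\Gamma$-partition of $V^k$, hence refines $[\alpha_{k,\Gamma}]^{\EP_k}$, and $2$-projecting yields $\EP_k(\Gamma)=\pr_2[\alpha_{k,\Gamma}]^{\EP_k}\preceq\pr_2[\alpha_{3k,\Gamma}]^{\WL_{3k}}=\overline{\WL}_{3k}(\Gamma)$.

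The real work — and the step I expect to be the main obstacle — is the bookkeeping in the second inclusion: one must match the $\EP_k$-stability conditions precisely to the combination of $\C_{3k,k}$-stability and Lemma~\ref{veryuseful}, check that $3k$ coordinates genuinely suffice for every such condition (both the coherence conditions on $V^k$ and the feed-back refinement on $V^2$ that the lift performs when iterated), and verify that the passage between the $V^k$- and $V^2$-level descriptions of $\EP_k$ commutes with projection, so that the minimality of $\overline{\WL}$ and $\overline{\EP}$ can be invoked as above. Since $\WL_{3k}$-stability is itself a fixed-point property, establishing ``$\WL_{3k}$-stable implies $k$-projection is $\EP_k$-stable'' handles the iteration of the lift in one stroke, without re-running an induction on rounds; the remaining verifications should then be routine given Lemma~\ref{veryuseful} and the arguments already used for $\WL_{k,r}$ and $\C_{k,r}$.
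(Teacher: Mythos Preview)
Your overall strategy for the second inclusion --- pass through $\C_{3k,k}$ via Lemmata~\ref{pf1} and~\ref{lm1}, then use Lemma~\ref{pff1} and Lemma~\ref{veryuseful} to certify the coherence constraints on $V^k$ using only $3k$ coordinates --- is exactly what the paper does. The substantive difference is in how you package $\EP_k$, and that packaging creates a real gap.

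In this paper $\EP_k(\Gamma)$ is \emph{not} defined via a $k$-refinement operator on $\mathcal{P}(V^k)$, nor as a fixed point of iterating a lift-close-project on $V^2$. It is a one-shot computation: form the tensor structure $\Gamma^{(k)}\in\mathcal{P}((V^k)^2)$, take its $\WL_2$-stabilization $\hat\Gamma^{(k)}$ on $(V^k)^2$, and restrict to the diagonal $I_\Delta\cong V$ via the map $\delta$. So there is no ``$\EP_k$-stable partition of $V^k$'' in the sense of Section~\ref{rop}, and the identity $\EP_k(\Gamma)=\pr_2[\alpha_{k,\Gamma}]^{\EP_k}$ that you use throughout is not a definition; the fact that the colour classes of $\EP_k(\Gamma)$ correspond to the cells of $\hat\Gamma^{(k)}$ of the form $X_R^k$ is precisely Proposition~\ref{pono3}, cited here from~\cite{pono4}. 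Your argument for the first inclusion is, in effect, a sketch of Proposition~\ref{pono2} (also imported from~\cite{pono4}), and your passage from the $V^k$-level to $\EP_k(\Gamma)$ tacitly uses Proposition~\ref{pono3}; the paper simply cites both.

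For the second inclusion the paper makes your plan precise by replacing your undefined ``$\EP_k$-stable on $V^k$'' with an honest $\WL_2$-stability statement on $(V^k)^2$: with $\Lambda=[\alpha_{3k,\Gamma}]^{\C_{3k,k}}$, Lemma~\ref{pff1} gives that $\pr_{2k}\Lambda$ is $\WL_{2k,k}$-stable, and a disjoint-union count (restricting the index set to $T=\{(1,\dots,k),(k{+}1,\dots,2k)\}$) shows that $(\pr_{2k}\Lambda)\circ\psi_{k,2}^{-1}$ is $\WL_2$-stable on $(V^k)^2$ and refines $\Gamma^{(k)}$; minimality of $\hat\Gamma^{(k)}$ then yields $\hat\Gamma^{(k)}\preceq(\pr_{2k}\Lambda)\circ\psi_{k,2}^{-1}$, and Lemma~\ref{veryuseful} identifies the diagonal restriction with $\pr_2\Lambda=\overline{\C}_{3k,k}(\Gamma)\preceq\overline{\WL}_{3k}(\Gamma)$. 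If you rewrite your second step in these terms --- working on $(V^k)^2$ and invoking minimality of the $\WL_2$-closure rather than an ad~hoc ``$\EP_k$-stability'' --- your proof goes through and coincides with the paper's.
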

We first define the mapping $\EP_k$ in terms of the refinement operator $\WL_2$.

For $P\in \mathcal{P}(V^2)$, $k\in\nats$ and some symbol $\Delta$, set $P^{(k)}: (V^k)^2\rightarrow \im(P)^k\cup\im(P)^k\times\{\Delta\}$ to be the following labelled partition of $(V^k)^2$:
$$
P^{(k)}(\vec{u},\vec{v})=\begin{cases}
(P(u_1,v_1),P(u_2,v_2),\hdots,P(u_k,v_k),\Delta)\,\;&\text{if $\vec{u}=\vec{v}=(u,u,\hdots,u)$ for some $u\in V$}\\
(P(u_1,v_1),P(u_2,v_2),\hdots,P(u_k,v_k))\,\;&\text{otherwise}
\end{cases}
$$
for all $\vec{u},\vec{v}\in V^k$. Note that we have denoted elements of $(V^k)^2$ by $(\vec{u},\vec{v})$ as opposed to $(\vec{u}\cdot\vec{v})$ to emphasize the difference between a set of \emph{pairs} of $k$-tuples as opposed to a set of $2k$-tuples. Hence, if $\Gamma$ is a graph, then $[\Gamma^{(k)}]^{\WL_2}$ is a coherent configuration on $V^k$, which we denote by $\hat{\Gamma}^{(k)}$. Observe that the set
$$
I_\Delta=\{ (u,u,\hdots,u) \mid u\in V\}\subset V^k
$$
is a union of cells of $\hat{\Gamma}^{(k)}$. Hence, the restriction $\hat{\Gamma}^{(k)}|_{I_\Delta^2}$ is a coherent configuration on $I_\Delta$. Let $\delta:V^2\rightarrow I_\Delta^2$ be the map
\begin{equation}\label{89}
(u,v)\mapsto (\vec{u},\vec{v})
\end{equation}
where $\vec{u}=(u,u,\hdots,u)\in V^k$ and $\vec{v}=(v,v,\hdots,v)\in V^k$. We define $\EP_k(\Gamma)=\hat{\Gamma}^{(k)}\circ\delta$ which, one may note, is a coherent configuration on $V$.

Theorem~1.1 in~\cite{pono4} shows that the family $\mathcal{S}_{\EP}=\{\EP_1,\EP_2,\hdots\}$ forms, indeed, a SPAS. Furthermore, the authors also prove the following properties. For a binary relation $R$ on $V$, set $X_R^k=\{(u,v,\hdots,v)\in V^k \mid (u,v)\in R\}$.
\begin{proposition}[Proposition 3.6 in \cite{pono4}]\label{pono3}
$R=\{(u,v)\in V^2 \mid \EP_k(\Gamma)(u,v)=\sigma\}$ for some $\sigma\in\im(\EP_k(\Gamma))$ if, and only if, $X_R^k$ is a cell of $\hat\Gamma^{(k)}$.
\end{proposition}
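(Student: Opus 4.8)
The plan is to reduce Proposition~\ref{pono3} to a single structural fact about the coherent configuration $\rho:=\hat\Gamma^{(k)}$, relating its cells to the colours it assigns to pairs of diagonal tuples. Throughout I write $w^k:=(w,\dots,w)$ for $w\in V$, and for $(u,v)\in V^2$ I set $\beta(u,v):=(u,v,v,\dots,v)\in V^k$; for $k\ge 2$ (the case $k=1$ being degenerate) $\beta$ is a bijection of $V^2$ onto $X:=\{(u,v,\dots,v)\mid u,v\in V\}$, we have $X_R^k=\beta(R)$, and by construction $\EP_k(\Gamma)(u,v)=\rho(u^k,v^k)$. The elementary observation I would use repeatedly is that, since $\rho$ refines $\Gamma^{(k)}$ and $\Gamma$ is a rainbow, every colour $\sigma\in\im(\rho)$ remembers the set $\mathrm{pat}(\sigma)\subseteq[k]$ of coordinates on which the underlying $\Gamma^{(k)}$-colour is a loop-colour of $\Gamma$, so that $\mathrm{pat}(\rho(\vec a,\vec c))=\{i\mid a_i=c_i\}$.

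First I would establish two preliminary facts. (i) $X$ is a union of cells of $\rho$: by $\WL_2$-stability the loop-colour $\rho(\vec a,\vec a)$ determines the multiset $\{\rho(\vec a,\vec c)\mid \vec c\in V^k\}$; since $I_\Delta$ is a union of cells and colours remember their patterns, this determines, for each $i,j$, the number of $w$ with $\{i,j\}\subseteq\mathrm{pat}(\rho(\vec a,w^k))$, which is $1$ if $a_i=a_j$ and $0$ otherwise, so the loop-colour of $\vec a$ determines its full coordinate-equality pattern and in particular whether $\vec a\in X$. (ii) For $u\neq v$, $\beta(u,v)$ is the \emph{unique} $\vec c$ with $\mathrm{pat}(\rho(u^k,\vec c))=\{1\}$ and $\mathrm{pat}(\rho(v^k,\vec c))=\{2,\dots,k\}$ (these pattern conditions force $c_1=u$, $c_2=\dots=c_k=v$); and, since $I_\Delta$ is a union of cells and a pair-colour determines the cells of its endpoints, $u^k$ is in turn the unique $\vec c$ with $\rho(\beta(u,v),\vec c)=\rho(\beta(u,v),u^k)$, and similarly for $v^k$.

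The core step is the claim that, for $u\neq v$ and $u'\neq v'$,
\[
\rho(u^k,v^k)=\rho(u'^k,v'^k)\ \Longleftrightarrow\ \rho(\beta(u,v),\beta(u,v))=\rho(\beta(u',v'),\beta(u',v')),
\]
the degenerate case $u=v$ being compatible because then $\beta(u,u)=u^k\in I_\Delta$ and both colours are the loop-colour of $u^k$, while $I_\Delta$ and $X$ are unions of cells. For ``$\Leftarrow$'': equal loop-colours place $\beta(u,v)$ and $\beta(u',v')$ in one cell, so by $\WL_2$-stability they see the same multiset of colours; by (ii), $\sigma_0:=\rho(\beta(u,v),u^k)$ and $\tau_0:=\rho(\beta(u,v),v^k)$ are realised exactly once from $\beta(u,v)$, hence once from $\beta(u',v')$, and the pattern and cell constraints force the realising vertices to be $u'^k$ and $v'^k$; then, since $u^k$ is the unique $\vec c$ with $\rho(\beta(u,v),\vec c)=\sigma_0$, the identity $\sum_\mu p^{\tau_0}_{\sigma_0\mu}=1$ shows $\rho(u^k,v^k)$ is the unique $\mu$ with $p^{\tau_0}_{\sigma_0\mu}=1$, a quantity intrinsic to $\rho$, and the same computation for $(u',v')$ gives the same $\mu$. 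For ``$\Rightarrow$'': with $\kappa$ the common colour, the cells of $u^k,v^k$ are read off from $\kappa$, and (ii) says $\sum p^{\kappa}_{\sigma,\tau^{-1}}=1$ over the ($\kappa$-determined, finite) set of colour-pairs with $\mathrm{pat}(\sigma)=\{1\}$, $\mathrm{pat}(\tau)=\{2,\dots,k\}$ and the right source cells; so a unique such pair $(\sigma_0,\tau_0)$ occurs, $\beta(u,v)$ is the unique $\vec c$ with $\rho(u^k,\vec c)=\sigma_0$, and hence its cell (the target cell of $\sigma_0$) and its loop-colour are determined by $\kappa$ alone, identically for $(u',v')$.

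Granting the claim, the proposition is then formal: the $\EP_k(\Gamma)$-colour class of $(u,v)$ is $\{(u',v')\mid\rho(\beta(u',v'),\beta(u',v'))=\rho(\beta(u,v),\beta(u,v))\}=\beta^{-1}(C)$ for $C$ the cell of $\rho$ containing $\beta(u,v)$; by (i), $C\subseteq X$, so $X^k_{\beta^{-1}(C)}=\beta(\beta^{-1}(C))=C$ is a cell, and conversely if $X_R^k$ is a cell, fixing $(u,v)\in R$ identifies it with the cell of $\beta(u,v)$ and $R$ with an $\EP_k(\Gamma)$-colour class. The hard part will be the bookkeeping inside the core step: one must verify that the pattern conditions, together with $I_\Delta$ being a union of cells and with pair-colours determining endpoint cells, genuinely \emph{force} the auxiliary vertices and colours $u^k,v^k,\beta(u,v),\sigma_0,\tau_0$ (and the inverses needed for the rainbow symmetry) to be unique, since it is exactly this uniqueness that converts the intersection-number counts into the colour equalities the statement requires.
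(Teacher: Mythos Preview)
The paper does not supply its own proof of this proposition: it is quoted verbatim as Proposition~3.6 of Evdokimov and Ponomarenko~\cite{pono4} and used as a black box. So there is no in-paper argument to compare against, and the relevant question is whether your self-contained proof is correct.

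Your argument is sound in outline and in almost all details. The two preliminary facts are fine: for (i), the loop-colour of $\vec a$ in the coherent configuration $\rho=\hat\Gamma^{(k)}$ does determine the multiset of colours $\rho(\vec a,\vec c)$, and since $I_\Delta$ is a union of cells one can filter that multiset down to $\vec c\in I_\Delta$ and read off, via patterns, whether $a_i=a_j$; for (ii), the pattern constraints indeed pin down $\beta(u,v)$ uniquely, and conversely $u^k$ and $v^k$ are the unique diagonal targets realising the colours $\sigma_0,\tau_0$ from $\beta(u,v)$. The ``$\Leftarrow$'' direction of the core claim is correct as written: equal loop-colours give equal out-multisets, the uniqueness from (ii) forces $\rho(\beta(u',v'),u'^k)=\sigma_0$ and $\rho(\beta(u',v'),v'^k)=\tau_0$, and then the intersection-number identity $p^{\tau_0}_{\sigma_0\mu}=[\mu=\rho(u^k,v^k)]$ transfers to $(u',v')$.

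There is one small slip in the ``$\Rightarrow$'' direction. After isolating the unique pair $(\sigma_0,\tau_0)$ with $p^{\kappa}_{\sigma_0,\tau_0^{*}}=1$, you assert that $\beta(u,v)$ is \emph{the unique} $\vec c$ with $\rho(u^k,\vec c)=\sigma_0$. That uniqueness is not justified and is generally false: any $\vec c=(u,w,\dots,w)$ with $w\neq u$ has the same pattern from $u^k$ and lies in $X$, so nothing you have established rules out $\rho(u^k,\vec c)=\sigma_0$ for $w\neq v$. Fortunately you do not need this claim. What you actually use, and correctly note in parentheses, is that the \emph{cell} of $\beta(u,v)$ is the target cell of $\sigma_0$, and this is determined by $\sigma_0$, hence by $\kappa$. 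So simply delete the uniqueness assertion and keep the target-cell sentence; the conclusion that the loop-colour of $\beta(u,v)$ depends only on $\kappa$ then follows, and the biconditional is complete. With that correction your proof stands on its own and is a clean alternative to citing~\cite{pono4}.
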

\begin{proposition}[Proposition 3.6 in \cite{pono4}]\label{pono2}
The equivalence classes of $[\alpha_{k,\Gamma}]^{\WL_k}$ are unions of cells of $\hat\Gamma^{(k)}$.
\end{proposition}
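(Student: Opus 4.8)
The plan is to prove the equivalent assertion $[\alpha_{k,\Gamma}]^{\WL_k}\preceq\beta$, where $\beta\in\mathcal{P}(V^k)$ is the partition of $V^k$ into cells of $\hat\Gamma^{(k)}$ (so $\beta(\vec v)=\hat\Gamma^{(k)}(\vec v,\vec v)$); this says exactly that every equivalence class of $[\alpha_{k,\Gamma}]^{\WL_k}$ is a union of cells. Since $\WL_k=\WL_{k,1}$ is inflationary and monotone, $[\alpha_{k,\Gamma}]^{\WL_k}$ is the coarsest $\WL_k$-stable partition that refines $\alpha_{k,\Gamma}$; so it is enough to establish (i) $\alpha_{k,\Gamma}\preceq\beta$, i.e. the cell of $\vec v$ determines its atomic type, and (ii) $\beta$ is $\WL_k$-stable.

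Both rest on a couple of \emph{recognizability} facts about $\hat\Gamma^{(k)}$, each obtained from $\hat\Gamma^{(k)}\preceq\Gamma^{(k)}$ together with the rainbow property of $\Gamma$ and the purpose of the marker $\Delta$. First, $I_\Delta$ is a union of cells of $\hat\Gamma^{(k)}$ (the constant tuples are exactly those whose $\Gamma^{(k)}$-self-colour carries $\Delta$). Second, for each $i\in[k]$ the substitution relation $S_i=\{(\vec v,\vec w)\in(V^k)^2 : v_l=w_l\text{ for all }l\neq i\}$ is a union of colour classes of $\hat\Gamma^{(k)}$: indeed $(\vec v,\vec w)\in S_i$ if and only if for every $l\neq i$ the $l$-th coordinate of $\Gamma^{(k)}(\vec v,\vec w)$ is a loop colour of $\Gamma$, which holds by condition~(1) of Definition~\ref{cohrain}. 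Combining the two, for each $j\in[k]$ the graph $C_j=\{(\vec v,(v_j,\dots,v_j)) : \vec v\in V^k\}$ of the map $f_j\colon\vec v\mapsto(v_j,\dots,v_j)$ is also a union of colour classes of $\hat\Gamma^{(k)}$, being the set of pairs $(\vec v,\vec w)$ with $\vec w\in I_\Delta$ and with the $j$-th coordinate of $\Gamma^{(k)}(\vec v,\vec w)$ a loop colour.

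For (i) I would invoke the general fact that if a union of colour classes of a coherent configuration $\rho$ is the graph of a function $g$, then $\rho(\vec v,g(\vec v))$ depends only on the cell of $\vec v$: writing the graph as a disjoint union of colour classes $D_t$, the out-degree of $\vec v$ in each $D_t$ is a sum of intersection numbers, hence constant on the cell of $\vec v$, and it is $0$ or $1$ because the union is a functional graph, so exactly one $D_t$ has out-degree $1$ there and it carries all of the pairs $(\vec v,g(\vec v))$. Applied to $f_j$ and $C_j$, the cell of $\vec v$ determines $\hat\Gamma^{(k)}(\vec v,(v_j,\dots,v_j))$, hence determines $\Gamma^{(k)}(\vec v,(v_j,\dots,v_j))=(\Gamma(v_1,v_j),\dots,\Gamma(v_k,v_j))$ and in particular each $\Gamma(v_i,v_j)$. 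Letting $j$ range over $[k]$ yields all of $\alpha_{k,\Gamma}(\vec v)$, so $\alpha_{k,\Gamma}\preceq\beta$.

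For (ii), by Proposition~\ref{stab}(1) it suffices to check that for every $\vec v\in V^k$, $i\in[k]$ and cell colour $\sigma$ the size of $\{x\in V : \beta(\vec v\langle i,x\rangle)=\sigma\}$ depends only on the cell of $\vec v$. Since $x\mapsto\vec v\langle i,x\rangle$ is injective with image the set of $\vec w$ with $(\vec v,\vec w)\in S_i$, this size equals the number of such $\vec w$ lying in cell $\sigma$; decomposing $S_i$ into colour classes of $\hat\Gamma^{(k)}$ and keeping only those whose second-coordinate cell is $\sigma$, it becomes a sum of out-degrees of $\vec v$ in colour classes, i.e. a sum of intersection numbers, which is constant on the cell of $\vec v$. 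Hence $\beta$ is $\WL_k$-stable, and together with (i) and the minimality of $[\alpha_{k,\Gamma}]^{\WL_k}$ this gives the claim. The only genuinely delicate point is the recognizability step of the second paragraph (squeezing the rainbow axiom and the $\Delta$ marker for all they are worth), and relatedly the general coherent-configuration fact used in (i); everything else is routine manipulation of cells and intersection numbers, together with the standard characterization of $[\alpha_{k,\Gamma}]^{\WL_k}$ as the coarsest $\WL_k$-stable refinement of $\alpha_{k,\Gamma}$.
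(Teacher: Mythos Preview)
The paper does not give its own proof of this proposition; it is simply quoted from~\cite{pono4}. So there is no ``paper's proof'' to compare to, and your outline has to stand on its own.

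Your step (i) is fine, and the recognizability facts about $I_\Delta$, $S_i$ and $C_j$ are correctly argued. The genuine gap is in step (ii). What you verify there---that for each fixed $i\in[k]$ and each cell colour $\sigma$ the quantity $|\{x\in V:\beta(\vec v\langle i,x\rangle)=\sigma\}|$ depends only on the cell of $\vec v$---is exactly the $\C_k$-stability condition (Proposition~\ref{stab}(2)), not the $\WL_k$-stability condition. Despite the sloppy phrasing of Proposition~\ref{stab}(1), the intended meaning (made explicit in the proof of Lemma~\ref{pf1} and in the definition of $\WL_{k,r}$) is that for every $\vec\phi\in\im(\beta)^{[k]}$ the size of $\{x\in V:\beta(\vec v\langle i,x\rangle)=\phi_i\ \text{for all}\ i\in[k]\}$ is determined by the cell of $\vec v$. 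From $\C_k$-stability of $\beta$ you only get $[\alpha_{k,\Gamma}]^{\C_k}\preceq\beta$, which is strictly weaker than $[\alpha_{k,\Gamma}]^{\WL_k}\preceq\beta$ since $\WL_k$-stable partitions form a subclass of $\C_k$-stable ones (Lemma~\ref{pf1}).

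The fix is close to your own setup. Using your relations, for each $i$ the tuple $\vec w_i:=\vec v\langle i,u\rangle$ is the \emph{unique} $\vec w$ with $(\vec v,\vec w)\in S_i$ and $(\vec w,(u,\dots,u))\in C_i$; since $S_i$ and $C_i$ are unions of colour classes of $\hat\Gamma^{(k)}$, intersection numbers (together with the fact that the colour of an arc determines the cell of its head) show that the cell of $\vec w_i$ is determined by $\hat\Gamma^{(k)}(\vec v,(u,\dots,u))$. Hence the whole tuple $(\beta(\vec v\langle i,u\rangle))_{i\in[k]}$ is a function of $\hat\Gamma^{(k)}(\vec v,(u,\dots,u))$, and the $\WL_k$-count for any $\vec\phi$ becomes a sum, over the appropriate colours $\kappa$, of out-degrees of $\vec v$ in colour $\kappa$, which depends only on the cell of $\vec v$. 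With this correction your argument goes through.
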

From these, one can deduce the left-most relation of formula (\ref{86}).
\begin{lemma}
For any graph $\Gamma$ and $k\in\nats$
$$
\overline{\WL}_k(\Gamma)\preceq \EP_k(\Gamma).
$$
\end{lemma}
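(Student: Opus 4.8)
The plan is to show that the partition $\overline{\WL}_k(\Gamma) = \pr_2[\alpha_{k,\Gamma}]^{\WL_k}$ is refined by $\EP_k(\Gamma) = \hat\Gamma^{(k)}\circ\delta$. Since both are partitions of $V^2$, it suffices to show that whenever $\EP_k(\Gamma)(u,v) = \EP_k(\Gamma)(u',v')$, then $\overline{\WL}_k(\Gamma)(u,v) = \overline{\WL}_k(\Gamma)(u',v')$. Unwinding the definitions, $\EP_k(\Gamma)(u,v) = \EP_k(\Gamma)(u',v')$ means that the pairs of diagonal tuples $(\vec u, \vec v)$ and $(\vec u', \vec v')$ lie in the same cell-pair class of the coherent configuration $\hat\Gamma^{(k)} = [\Gamma^{(k)}]^{\WL_2}$ on $V^k$, where $\vec u = (u,\dots,u)$, etc. Meanwhile $\overline{\WL}_k(\Gamma)(u,v)$ is determined by the class $[\alpha_{k,\Gamma}]^{\WL_k}(u,v,\dots,v)$, i.e. by which cell of $\hat\Gamma^{(k)}$ the tuple $(u,v,\dots,v) \in V^k$ belongs to — here I would invoke Proposition~\ref{pono2}, which says the $\WL_k$-classes refine the cells of $\hat\Gamma^{(k)}$, and Proposition~\ref{pono3}, which identifies these cells with the sets $X_R^k = \{(u,v,\dots,v) : (u,v)\in R\}$ for the relations $R$ of $\EP_k(\Gamma)$.

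The key step is the translation between the two viewpoints: the ``diagonal pair'' picture (pairs $(\vec u,\vec v)$ in $I_\Delta^2 \subseteq (V^k)^2$) used to define $\EP_k$, and the ``single tuple'' picture (tuples $(u,v,\dots,v)$ in $V^k$) used to extract information from $[\alpha_{k,\Gamma}]^{\WL_k}$. The bridge is that $\hat\Gamma^{(k)}$ is a coherent configuration on $V^k$, so its structure is rigid: knowing the $\hat\Gamma^{(k)}$-class of the pair $((u,\dots,u),(v,\dots,v))$ pins down, via the intersection numbers and the fact that $I_\Delta$ is a union of cells, the $\hat\Gamma^{(k)}$-class of the ``mixed'' tuple $(u,v,\dots,v)$. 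Concretely, I would argue that for fixed diagonal tuples $\vec u, \vec v$, the tuple $(u,v,\dots,v)$ is the unique element $\vec w \in V^k$ with $\Gamma^{(k)}(\vec u, \vec w)$ and $\Gamma^{(k)}(\vec w,\vec v)$ of prescribed ``shape'' (first coordinate matching $\vec u$, remaining coordinates matching $\vec v$), and coherence of $\hat\Gamma^{(k)}$ then forces this $\vec w$ into a class depending only on the class of $(\vec u,\vec v)$. Equivalently — and this is cleaner — one uses Proposition~\ref{pono3} directly: the $\EP_k(\Gamma)$-class of $(u,v)$ corresponds to the cell $X_R^k$ containing $(u,v,\dots,v)$, and by Proposition~\ref{pono2} this cell is a union of $\WL_k$-classes, so $\EP_k(\Gamma)(u,v) = \EP_k(\Gamma)(u',v')$ forces $(u,v,\dots,v)$ and $(u',v',\dots,v')$ into the same cell, hence (refinement goes the right way here) does \emph{not} immediately give them the same $\WL_k$-class — so in fact the clean statement I want is that $[\alpha_{k,\Gamma}]^{\WL_k}$ refines $\EP_k$ pulled up, giving $\overline{\WL}_k(\Gamma) \preceq \EP_k(\Gamma)$ as claimed.

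I would therefore carry out the steps in this order: (i) recall that $\overline{\WL}_k(\Gamma)(u,v)$ is a function of the cell of $\hat\Gamma^{(k)}$ containing $(u,v,\dots,v)$, using that $[\alpha_{k,\Gamma}]^{\WL_k}$ refines the cell partition (Proposition~\ref{pono2}) and that this $\WL_k$-partition restricted to the relevant diagonal-ish tuples is what $\pr_2$ reads off; (ii) recall from Proposition~\ref{pono3} that the cells of $\hat\Gamma^{(k)}$ of the form $X_R^k$ are in bijection with the classes of $\EP_k(\Gamma)$, and that $(u,v,\dots,v) \in X_R^k$ precisely when $\EP_k(\Gamma)(u,v)$ is the label of $R$; (iii) conclude that the map sending the $\WL_k$-class of $(u,v,\dots,v)$ to the $\EP_k(\Gamma)$-class of $(u,v)$ is well-defined, i.e. $\overline{\WL}_k(\Gamma) \preceq \EP_k(\Gamma)$. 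The main obstacle is bookkeeping: being careful that $\pr_2[\alpha_{k,\Gamma}]^{\WL_k}$ genuinely recovers the cell information of $\hat\Gamma^{(k)}$ at the tuples $(u,v,\dots,v)$ — this needs that $\WL_k$-refinement of the atomic types is at least as fine as $\WL_2$-refinement of $\Gamma^{(k)}$ on such tuples, which is exactly the content packaged in Proposition~\ref{pono2}. Once that is in hand the argument is a short diagram chase through the two propositions.
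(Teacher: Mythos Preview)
Your overall strategy---combine Proposition~\ref{pono2} and Proposition~\ref{pono3} to relate the $\WL_k$-class of $(u,v,\dots,v)$ to the $\EP_k$-class of $(u,v)$---is exactly what the paper does, and the digression about recovering $(u,v,\dots,v)$ from the diagonal pair via coherence is unnecessary. However, you have the direction of Proposition~\ref{pono2} reversed, and this propagates through the whole write-up. Proposition~\ref{pono2} says that the equivalence classes of $\Lambda=[\alpha_{k,\Gamma}]^{\WL_k}$ are \emph{unions} of cells of $\hat\Gamma^{(k)}$; in other words, the cell partition \emph{refines} $\Lambda$, not the other way round. You state it as ``the $\WL_k$-classes refine the cells'' and later as ``this cell is a union of $\WL_k$-classes'', which is backwards, and this is precisely why you find yourself writing that same-cell ``does \emph{not} immediately give them the same $\WL_k$-class''. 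With the correct direction it does: same cell $\Rightarrow$ same $\Lambda$-class.

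Concretely, the clean chain is: $\EP_k(\Gamma)(u,v)=\EP_k(\Gamma)(u',v')$ $\Rightarrow$ (by Proposition~\ref{pono3}) $(u,v,\dots,v)$ and $(u',v',\dots,v')$ lie in the same cell of $\hat\Gamma^{(k)}$ $\Rightarrow$ (by Proposition~\ref{pono2}, cells refine $\Lambda$) they lie in the same $\Lambda$-class $\Rightarrow$ $\overline{\WL}_k(\Gamma)(u,v)=\overline{\WL}_k(\Gamma)(u',v')$. Equivalently, as the paper phrases it: each class $[(u,v,\dots,v)]_\Lambda$ is a union of cells, all of whose elements are of the form $(u',v',\dots,v')$ because $\Lambda$ is graph-like, and hence (via Proposition~\ref{pono3}) each class of $\overline{\WL}_k(\Gamma)$ is a union of $\EP_k(\Gamma)$-classes. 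Your step~(iii) should therefore read: the map sending the $\EP_k$-class of $(u,v)$ to the $\overline{\WL}_k$-class of $(u,v)$ is well defined---not the reverse. Fix that one orientation error and your outline becomes the paper's proof.
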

\begin{proof}
For $k=1$ the statement is trivial. Let $k\geq 2$ and set $\Lambda=[\alpha_{k,\Gamma}]^{\WL_k}$. By Proposition \ref{pono2}, and because $\Lambda$ is graph-like, for any $u,v\in V$ the equivalence class $[(v,u,\hdots,u)]_{\Lambda}$ is a union of cells of $\hat\Gamma^{(k)}$ whose elements are all of the form $(v',u',\hdots,u')$,. Since $\overline{\WL}_k(\Gamma)=\pr_2\Lambda$, it follows from Proposition \ref{pono3} that its equivalence classes are unions of equivalence classes of $\EP_k(\Gamma)$. Hence $\overline{\WL}_k(\Gamma)\preceq \EP_k(\Gamma)$.
\end{proof}
We now apply Lemmata \ref{pf1} and \ref{pff1} to show the rightmost relation of formula (\ref{86}). For all $k,p\in\nats$ let $\psi_{k,p}:V^{pk}\rightarrow (V^k)^p$ be the map
$$
\vec{v}\mapsto (\vec{w}_1,\vec{w}_2,\hdots,\vec{w}_p)
$$
where $\vec{w}_i=(v_{1+(i-1)k},v_{2+(i-1)k},\hdots,v_{k+(i-1)k})$.
\begin{lemma}
For any graph $\Gamma$ and $k\in\nats$
$$
\EP_k(\Gamma)\preceq \overline{\WL}_{3k}(\Gamma).
$$
\end{lemma}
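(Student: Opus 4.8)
The plan is to exhibit $\overline{\WL}_{3k}(\Gamma)$ as being fine enough to recover $\EP_k(\Gamma)$. Set $\Lambda=[\alpha_{3k,\Gamma}]^{\WL_{3k}}$, so that by Proposition~\ref{ug} the partition $\Lambda$ is a $\WL_{3k}$-stable graph-like $\Gamma$-partition of $V^{3k}$ with $\overline{\WL}_{3k}(\Gamma)=\pr_2\Lambda$. Via the bijection $\psi_{k,3}$ we regard $V^{3k}$ as $(V^k)^3$; using invariance, consistency, property~(\ref{prop}) and Lemma~\ref{veryuseful} (which transfer along block permutations and block projections), one checks that $\Lambda$, so regarded, is a graph-like partition on the vertex set $V^k$. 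The first goal is to manufacture from $\Lambda$ a $\WL_2$-stable refinement of $\Gamma^{(k)}$ on the vertex set $V^k$ and then appeal to the fact that $\hat\Gamma^{(k)}=[\Gamma^{(k)}]^{\WL_2}$ is the coarsest such.

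By Lemma~\ref{pf1}, $\Lambda$ is $\C_{3k}$-stable, and hence by Lemma~\ref{lm1} it is $\C_{3k,k}$-stable. Since substituting an entire $V^k$-block of $(V^k)^3$ amounts to substituting $k$ consecutive coordinates of $V^{3k}$, this says precisely that $\Lambda$, viewed over the vertex set $V^k$, is $\C_{3,1}$-stable. Lemma~\ref{pff1} (applied over the vertex set $V^k$, with the parameters $k,r$ there taken to be $2,1$) then yields that the $2$-projection $\nu:=\pr_2\Lambda\in\mathcal P((V^k)^2)$, that is $\nu(\vec a,\vec b)=\Lambda(\vec a\cdot\vec b\cdot\vec b)$, is $\WL_2$-stable on $V^k$. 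Moreover, since $\alpha_{3k,\Gamma}\preceq\Lambda$, the colour $\nu(\vec a,\vec b)$ determines each $\Gamma(a_i,b_i)$ together with the equality type of the tuple $\vec a\cdot\vec b\cdot\vec b$ (by property~(\ref{prop}) for $\Lambda$), hence it determines $\Gamma^{(k)}(\vec a,\vec b)$; thus $\Gamma^{(k)}\preceq\nu$. As $\hat\Gamma^{(k)}$ is the coarsest $\WL_2$-stable refinement of $\Gamma^{(k)}$, we obtain $\hat\Gamma^{(k)}\preceq\nu$, and composing with $\delta$ gives $\EP_k(\Gamma)=\hat\Gamma^{(k)}\circ\delta\preceq\nu\circ\delta$.

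It remains to compare $\nu\circ\delta$ with $\pr_2\Lambda=\overline{\WL}_{3k}(\Gamma)$. For $u,v\in V$ we have $\nu\circ\delta(u,v)=\Lambda(\vec z)$, where $\vec z\in V^{3k}$ has its first $k$ entries equal to $u$ and the remaining $2k$ equal to $v$, whereas $\overline{\WL}_{3k}(\Gamma)(u,v)=\pr_2\Lambda(u,v)=\Lambda(\vec w)$, where $\vec w\in V^{3k}$ has first entry $u$ and all other entries $v$. Since $\vec z=\vec w\langle(2,3,\hdots,k),\,\pr_{(1,1,\hdots,1)}\vec w\rangle$, Lemma~\ref{veryuseful} applied to $\Lambda$ (with $\vec i=(2,\hdots,k)$ and $\vec j=(1,\hdots,1)$) shows that $\Lambda(\vec w)=\Lambda(\vec w')$ implies $\Lambda(\vec z)=\Lambda(\vec z')$; hence $\nu\circ\delta\preceq\overline{\WL}_{3k}(\Gamma)$, and combining with the previous step, $\EP_k(\Gamma)\preceq\overline{\WL}_{3k}(\Gamma)$. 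I expect the step requiring the most care to be the verification that $\nu$ is $\WL_2$-stable: one must correctly match up the block substitutions of $(V^k)^3$ with the coordinate substitutions witnessing $\C_{3k,k}$-stability of $\Lambda$, and confirm that $\Lambda$ genuinely meets the graph-likeness hypotheses of Lemma~\ref{pff1} when viewed over the vertex set $V^k$.
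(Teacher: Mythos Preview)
Your proposal is correct and follows essentially the same strategy as the paper: derive $\C_{3k,k}$-stability of a suitable $\Lambda$, use it to produce a $\WL_2$-stable refinement of $\Gamma^{(k)}$ on the vertex set $V^k$, invoke minimality of $\hat\Gamma^{(k)}$, and finish with Lemma~\ref{veryuseful} to compare the two $\pr_2$'s. The only notable difference is that you apply Lemma~\ref{pff1} over the base set $V^k$ (which obliges you to check that $\Lambda\circ\psi_{k,3}^{-1}$ is graph-like there, as you rightly flag), whereas the paper takes $\Lambda=[\alpha_{3k,\Gamma}]^{\C_{3k,k}}$, applies Lemma~\ref{pff1} over $V$ to get $\WL_{2k,k}$-stability of $\pr_{2k}\Lambda$, and then restricts by hand to the two block-index tuples $T=\{(1,\hdots,k),(k+1,\hdots,2k)\}$---thereby sidestepping the graph-likeness check over $V^k$ at the cost of one extra disjoint-union argument.
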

\begin{proof}
Let $\Lambda=[\alpha_{3k,\Gamma}]^{\C_{3k,k}}$ and set $\hat\Phi=\Phi^{\pr_{2k}\Lambda,k}$ . By Lemma \ref{pff1} $\pr_{2k}\Lambda$ is a $\WL_{2k,k}$ stable partition of $V^{2k}$. Hence, for all $\vec{\phi}\in \hat\Phi$ the size of
$$
\{\vec{x}\in V^k \mid \pr_{2k}\Lambda(\vec{v}\langle \vec{i},\vec{x}\rangle)=\phi_{\vec{i}}\,\,\forall \vec{i}\in [2k]^{(k)}\}
$$
is independent of the choice of $\vec{v}$ from the equivalence class $[\vec{v}]_{\pr_{2k}\Lambda}$. In particular, if $T=\{(1,2,\hdots,k),(k+1,k+2,\hdots,2k)\}$, then for all $\vec{\xi}\in\im(\pr_{2k}\Lambda)^{T}$
$$
\{\vec{x}\in V^k \mid \pr_{2k}\Lambda(\vec{v}\langle \vec{i},\vec{x}\rangle)=\xi_i\,\,\forall \vec{i}\in T\}=\bigcup_{\{\vec{\phi}\in \hat\Phi \mid \phi_{\vec{i}}=\xi_{\vec{i}}\,\,\forall\vec{i}\in T\}}\{\vec{x}\in V^k \mid \pr_{2k}\Lambda(\vec{v}\langle \vec{i},\vec{x}\rangle)=\phi_{\vec{i}}\,\,\forall \vec{i}\in [2k]^{(k)}\}.
$$
The right-hand side of the above is a disjoint union of sets whose size is independent of the choice of $\vec{v}$ from the equivalence class $[\vec{v}]_{\pr_{2k}\Lambda}$. Hence, the size of the left-hand side is independent of the choice of $\vec{v}$ from the equivalence class $[\vec{v}]_{\pr_{2k}\Lambda}$, and therefore, $(\pr_{2k}\Lambda)\circ \psi_{k,2}^{-1}$ is a $\WL_2$-stable partition of $(V^k)^2$. In particular, it refines $\alpha_{2k,\Gamma}\circ \psi_{k,2}^{-1}$. But $\Gamma^{(k)}\preceq \alpha_{2k,\Gamma}\circ \psi_{k,2}^{-1}$ and thus, since $\hat\gamma^{(k)}$ is a minimal $\WL_2$-stable partition of $V^{2k}$ refining $\Gamma^{(k)}$, then 
$\hat\Gamma^{(k)}\preceq (\pr_{2k}\Lambda)\circ\psi_{k,2}^{-1}$. In particular, $\hat\Gamma^{(k)}|_{I_\Delta^2}\preceq (\pr_{2k}\Lambda)\circ\psi_{k,2}^{-1}|_{I_\Delta^2}$ and hence, if $\delta$ is as defined in formula (\ref{89})
$$
\EP_k(\Gamma)\preceq (\pr_{2k}\Lambda)\circ\psi_{k,2}^{-1}\circ \delta.
$$
Since $\Lambda$ is graph-like, Lemma \ref{veryuseful} implies that for all $u,u',v,v'\in V$
\begin{equation}\label{96}
\pr_{2k}\Lambda(u,v,\hdots,v)=\pr_{2k}\Lambda(u',v',\hdots,v')\implies \pr_{2k}\Lambda(\vec{u}\cdot\vec{v})=\pr_{2k}\Lambda(\vec{u}'\cdot\vec{v}),
\end{equation}
where $\vec{u}=(u,u,\hdots,u),\vec{v}=(v,v,\hdots,v),\vec{u}'=(u',u',\hdots,u'),\vec{v}'=(v',v',\hdots,v')\in V^k$. It therefore follows that $\pr_2\Lambda=(\pr_{2k}\Lambda) \circ \psi_{k,2}^{-1}\circ\delta$. But $\pr_2\Lambda=\overline{\C}_{3k,k}(\Gamma)\preceq \overline{\WL}_{3k}(\Gamma)$. Thus, by formula (\ref{96})
$$
\EP_k(\Gamma)\preceq \overline{\WL}_{3k}(\Gamma).
$$

\end{proof}

\section{Proofs of Theorems $\ref{thm:main2}$ and $\ref{thm:main3}$}

We proceed using the same strategy as for the proof of Theorem \ref{thm:main1}.

\begin{lemma}\label{im1}
The $k$-projection of a graph-like $\IM_{k+1}^\mathbb{F}$-stable partition is $\C_k$-stable for any field $\mathbb{F}$ and $k\in\nats$.
\end{lemma}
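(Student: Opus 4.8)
The plan is to reduce $\C_k$-stability of $\overline\gamma:=\pr_k\gamma$ to a single counting statement that can be extracted from the \emph{rank}, rather than the trace, of one of the matrices $\chi^{\gamma,\vec w}_{(k,k+1),\sigma}$ attached to $\gamma$. First I would record that $\overline\gamma$ is graph-like, hence invariant, being a projection of the graph-like partition $\gamma$. Using invariance together with the characterisation of $\C_k$-stability in Proposition~\ref{stab}(2), the task reduces to showing: for all $\sigma$ and all $\vec v,\vec v'\in V^k$ with $\overline\gamma(\vec v)=\overline\gamma(\vec v')$,
$$
|\{x\in V \mid \overline\gamma(\vec v\langle k,x\rangle)=\sigma\}|=|\{x\in V \mid \overline\gamma(\vec v'\langle k,x\rangle)=\sigma\}|.
$$
To pass from an arbitrary index $i\in[k]$ to the index $k$, I would conjugate by the transposition $\pi=(i,k)\in\Sym(k)$: invariance of $\overline\gamma$ makes the induced recolouring $\theta\colon\overline\gamma(\vec u)\mapsto\overline\gamma(\vec u^{\pi})$ a well-defined involution of $\im(\overline\gamma)$, and since $(\vec v\langle i,x\rangle)^{\pi}=\vec v^{\pi}\langle k,x\rangle$ the sets $\{x\mid\overline\gamma(\vec v\langle i,x\rangle)=\sigma\}$ and $\{x\mid\overline\gamma(\vec v^{\pi}\langle k,x\rangle)=\theta(\sigma)\}$ literally coincide; as also $\overline\gamma(\vec v^{\pi})=\overline\gamma(\vec v'^{\pi})$, the index-$k$ case applied to the colour $\theta(\sigma)$ closes the reduction.

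Next I would lift the remaining statement to $V^{k+1}$. Put $\vec w=\vec v\cdot v_k$ and $\vec w'=\vec v'\cdot v'_k$, so that $\gamma(\vec w)=\overline\gamma(\vec v)=\overline\gamma(\vec v')=\gamma(\vec w')$, and observe straight from the definition of $\pr_k$ that $\overline\gamma(\vec v\langle k,x\rangle)=\gamma(\vec w\langle(k,k+1),(x,x)\rangle)$; thus the count in the display above is exactly the number of $1$s on the diagonal of $\chi^{\gamma,\vec w}_{(k,k+1),\sigma}$. The crux is the observation that for every $\sigma\in\im(\overline\gamma)$ this matrix is a $0$--$1$ \emph{diagonal} matrix: such a $\sigma$ is the colour of some $(k+1)$-tuple whose last two coordinates agree, so by the graph-likeness of $\gamma$ --- property~(\ref{prop}) applied with the indices $k$ and $k+1$ --- \emph{every} tuple of colour $\sigma$ has its last two coordinates equal, whence $\gamma(\vec w\langle(k,k+1),(x,y)\rangle)=\sigma$ forces $x=y$. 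Consequently the quantity we must control equals $\mathrm{rank}_{\mathbb{F}}\,\chi^{\gamma,\vec w}_{(k,k+1),\sigma}$, since the rank of a $0$--$1$ diagonal matrix over any field is its number of nonzero entries.

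To finish, I would invoke $\IM^{\mathbb{F}}_{k+1}$-stability of $\gamma$ with $\vec i=(k,k+1)\in[k+1]^{(2)}$ and the pair $\vec w,\vec w'$ (which have equal $\gamma$-colour): it supplies $S\in\mathrm{GL}_V(\mathbb{F})$ with $S\,\chi^{\gamma,\vec w}_{(k,k+1),\sigma}\,S^{-1}=\chi^{\gamma,\vec w'}_{(k,k+1),\sigma}$ for all $\sigma$, and similar matrices have the same rank over $\mathbb{F}$; hence the two counts coincide, which is exactly what the reduction needs. The case $k=1$ is trivial because $\C_{1,1}$ is the identity operator, and for $k\ge 2$ one has $k+1\ge 3$ so that $\IM^{\mathbb{F}}_{k+1}$ is genuinely the invertible-map operator and the argument applies verbatim. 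The only point that needs care --- and the reason a direct trace computation does not work --- is positive characteristic: there the trace records the count only modulo the characteristic, whereas the rank of a $0$--$1$ diagonal matrix is the honest count over \emph{every} field; it is precisely in order to have a genuinely diagonal matrix (and not merely one of rank $\le 1$) available that the reduction to the index $i=k$ is made.
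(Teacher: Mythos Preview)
Your proof is correct and follows essentially the same approach as the paper: for colours $\sigma\in\im(\overline\gamma)$ the matrix $\chi^{\gamma,\vec w}_{\vec i,\sigma}$ is a $0$--$1$ diagonal matrix, so its rank over any field equals the desired count, and the similarity supplied by $\IM^{\mathbb F}_{k+1}$-stability preserves this rank. Your explicit reduction to the single index $i=k$ via invariance (leading to the choice $\vec i=(k,k+1)$) is a slightly cleaner packaging of the same idea the paper pursues with $\vec i=(j,k+1)$.
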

\begin{proof}
Let $\gamma\in\mathcal{P}(V^{k+2})$ be a graph-like $\IM_{k+2}^\mathbb{F}$-stable partition and set $\overline{\gamma}=\pr_k\gamma$. Suppose $\gamma(\vec{v})=\gamma(\vec{v}')$. Fix some $j\in[k]$ and let $\vec{i}=(j,k+1)\in[k+1]^{(2)}$. As $\gamma$ is graph-like, there is some $\sigma\in \im(\gamma)$ such that the matrix $\chi_{\sigma,\vec{i}}^{\gamma,\vec{v}}$ is diagonal and non-zero. Since $\gamma(\vec{v})=\gamma(\vec{v}')$, there is some $S\in \mathrm{GL}_V(\mathbb{F})$ such that $S\chi_{\sigma,\vec{i}}^{\gamma,\vec{v}}S^{-1}=\chi_{\sigma,\vec{i}}^{\gamma,\vec{v}'}$. So $\chi_{\sigma,\vec{i}}^{\gamma,\vec{v}}$ and $\chi_{\sigma,\vec{i}}^{\gamma,\vec{v}'}$ have the same rank, and hence the same number of $1$s on the diagonal. Thus, for any $w\in V$ the size of the set $\{u\in V \mid \gamma(\vec{v}\langle \vec{i},(u,u)\rangle)=\sigma\}$ is independent of the choice of $\vec{v}$ from the equivalence class $[\vec{v}]_\gamma$. Also,
$$
\{u\in V \mid \overline{\gamma}(\pr_k\vec{v}\langle j,u\rangle)=\sigma\}=\{u\in V \mid \gamma(\vec{v}\langle \vec{i},(u,u)\rangle)=\sigma\}
$$
and hence, as $\gamma$ is graph-like, the size of the left-hand side is independent of the choice of $\pr_k\vec{v}$ from the equivalence class $[\pr_k\vec{v}]_{\overline\gamma}$. The result follows.
\end{proof}
\begin{corollary}\label{imm1}
For any graph $\Gamma$ and $k\in\nats$
$$
\overline{\IM}_{k+2}^\mathbb{F}(\Gamma)\succeq \overline{\WL}_k(\Gamma).
$$
\end{corollary}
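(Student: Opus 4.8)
The plan is to instantiate the general domination strategy recorded at the start of Section~5: I will exhibit, for the graph $\Gamma$ at index $k+2$, a $\C_{k+1}$-stable graph-like $\Gamma$-partition whose $2$-projection is dominated by $\overline{\IM}_{k+2}^\mathbb{F}(\Gamma)$, and then invoke the combinatorial Cai--F\"urer--Immerman equivalence~(\ref{cf}), namely $\overline{\C}_{k+1}(\Gamma)\approx\overline{\WL}_k(\Gamma)$. Concretely, I would fix a graph $\Gamma$ on vertex set $V$ and set $\Lambda=[\alpha_{k+2,\Gamma}]^{\IM_{k+2}^\mathbb{F}}$. By Proposition~\ref{ug} (that $\mathbf{IM}(\mathbb{F})$ is a refinement procedure), together with the fact that iterating a graph-like operator on a graph-like partition preserves graph-likeness, $\Lambda$ is a graph-like, $\IM_{k+2}^\mathbb{F}$-stable partition of $V^{k+2}$ with $\alpha_{k+2,\Gamma}\preceq\Lambda$; in particular $\Lambda$ is a $\Gamma$-partition and $\overline{\IM}_{k+2}^\mathbb{F}(\Gamma)=\pr_2\Lambda$.

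The key step is to apply Lemma~\ref{im1} \emph{at index $k+1$} (not $k$): the $(k+1)$-projection of the graph-like $\IM_{k+2}^\mathbb{F}$-stable partition $\Lambda$ is $\C_{k+1}$-stable. Since projections of graph-like partitions are graph-like, and $\alpha_{k+1,\Gamma}\preceq\pr_{k+1}\alpha_{k+2,\Gamma}\preceq\pr_{k+1}\Lambda$, the partition $\pr_{k+1}\Lambda$ is a graph-like, $\C_{k+1}$-stable $\Gamma$-partition; being $\C_{k+1}$-stable and refining $\alpha_{k+1,\Gamma}$, it refines the minimal such partition $[\alpha_{k+1,\Gamma}]^{\C_{k+1}}$. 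Projecting to pairs and using $\pr_2\circ\pr_{k+1}=\pr_2$ I obtain
$$
\overline{\IM}_{k+2}^\mathbb{F}(\Gamma)=\pr_2\Lambda=\pr_2\pr_{k+1}\Lambda\succeq\pr_2[\alpha_{k+1,\Gamma}]^{\C_{k+1}}=\overline{\C}_{k+1}(\Gamma),
$$
and then formula~(\ref{cf}) gives $\overline{\C}_{k+1}(\Gamma)\approx\overline{\WL}_k(\Gamma)$, whence $\overline{\IM}_{k+2}^\mathbb{F}(\Gamma)\succeq\overline{\WL}_k(\Gamma)$, as required. (The case $k=1$ is trivial, since $\overline{\IM}_3^\mathbb{F}(\Gamma)$ refines $\Gamma=\overline{\WL}_1(\Gamma)$ by definition.)

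The substantive work has already been done in Lemma~\ref{im1}, whose point is that the invertible-map refinement recovers the rank---hence the number of $1$'s on the diagonal---of the diagonal $0$-$1$ matrices $\chi^{\gamma,\vec v}_{\vec i,\sigma}$, which is precisely a counting quantity. So I expect no real obstacle beyond bookkeeping: one must apply Lemma~\ref{im1} at index $k+1$ so that the CFI-equivalence that lines up is exactly $\overline{\C}_{k+1}\approx\overline{\WL}_k$ (applying it at index $k$ would only yield $\overline{\WL}_{k-1}$); one must check that $\pr_{k+1}\Lambda$ is still graph-like and a $\Gamma$-partition so that minimality of $[\alpha_{k+1,\Gamma}]^{\C_{k+1}}$ applies; and one must verify the projection identity $\pr_2\circ\pr_{k+1}=\pr_2$.
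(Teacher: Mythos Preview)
Your proposal is correct and is precisely the paper's approach, just with the details spelled out: the paper's one-line proof invokes Lemma~\ref{im1} (instantiated, as you do, at index $k+1$ so that the $(k+1)$-projection of an $\IM_{k+2}^{\mathbb{F}}$-stable partition is $\C_{k+1}$-stable) together with $\overline{\C}_{k+1}(\Gamma)\approx\overline{\WL}_k(\Gamma)$. Your bookkeeping about graph-likeness, the $\Gamma$-partition property, minimality, and $\pr_2\circ\pr_{k+1}=\pr_2$ is exactly what is tacitly used in the general strategy recorded at the start of Section~5.
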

\begin{proof}
This follows from Lemma \ref{im1} and the fact that $\overline{\C}_{k+1}(\Gamma)\approx\overline{\WL}_k(\Gamma)$.
\end{proof}
\begin{lemma}\label{im2}
The $k$-projection of a graph-like $\C_{k+1}$-stable partition is $\IM_k^\mathbb{F}$-stable for any $\mathbb{F}$ with $\text{char}(\mathbb{F})=0$ or $\text{char}(\mathbb{F})> |V|$.
\end{lemma}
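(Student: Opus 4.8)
The plan is to verify for the $k$-projection $\overline\gamma=\pr_k\gamma$ the sufficient condition for $\IM_k^\mathbb{F}$-stability from Proposition~\ref{stab}(3): given $\vec u,\vec v\in V^k$ with $\overline\gamma(\vec u)=\overline\gamma(\vec v)$ and $\vec i\in[k]^{(2)}$, produce $S\in\mathrm{GL}_V(\mathbb{F})$ with $S\chi^{\overline\gamma,\vec u}_{\vec i,\sigma}S^{-1}=\chi^{\overline\gamma,\vec v}_{\vec i,\sigma}$ for all $\sigma$. The cases $k\leq2$ are vacuous, so assume $k\geq3$. The key preliminary is that, by Lemma~\ref{pff1} applied with $r=1$, $\overline\gamma$ is a $\WL_k$-stable graph-like partition (hence also $\C_{k,2}$-stable, by Lemmata~\ref{pf1} and~\ref{lm1}). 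Using $\chi^{\overline\gamma,\vec w}_{\vec i,\sigma}=\chi^{\overline\gamma,\vec w^\pi}_{\pi(\vec i),\sigma^\pi}$ (as in the proof of Proposition~\ref{ug}) together with invariance of $\overline\gamma$, I would first apply a permutation $\pi\in\Sym(k)$ carrying $\vec i$ to $(k-1,k)$, which replaces $\vec u,\vec v$ by $\vec u^\pi,\vec v^\pi$ (still equal under $\overline\gamma$) and merely reindexes the colours; thus it suffices to treat $\vec i=(k-1,k)$. Writing $\vec p=\pr_{k-2}\vec u$ and $\vec q=\pr_{k-2}\vec v$, the matrices to be matched are exactly the standard-basis matrices of the labelled partitions $\psi_{\vec p},\psi_{\vec q}$ of $V^2$ defined by $\psi_{\vec p}(x,y)=\overline\gamma(\vec p\cdot(x,y))$ and $\psi_{\vec q}(x,y)=\overline\gamma(\vec q\cdot(x,y))$.

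The heart of the proof is to show that $\psi_{\vec p}$ and $\psi_{\vec q}$ are coherent configurations on $V$ which are algebraically isomorphic via the identity map on colours. That each is a rainbow follows from invariance of $\overline\gamma$ (for the transpose-symmetry axiom) and from property~(\ref{prop}) of graph-like partitions (which forces the diagonal to be a union of classes). For the intersection numbers, fix $(x,y)$ with $\overline\gamma(\vec p\cdot(x,y))=\kappa$ and write $\vec t=\vec p\cdot(x,y)$; then
$$
\{z\in V : \psi_{\vec p}(x,z)=\sigma,\ \psi_{\vec p}(z,y)=\tau\}=\{z\in V : \overline\gamma(\vec t\langle k,z\rangle)=\sigma,\ \overline\gamma(\vec t\langle k-1,z\rangle)=\tau\},
$$
and summing the $\WL_k$-stability counts of Proposition~\ref{stab}(1) over all $\vec\phi\in\Phi^{\overline\gamma,1}$ with $\phi_{k-1}=\tau$ and $\phi_k=\sigma$ shows that the size of this set depends only on the $\overline\gamma$-class of $\vec t$; hence it depends only on $\kappa$, and it is unchanged on replacing $\vec p$ by $\vec q$ (for the same $\kappa$). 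Together with the equality $\im(\psi_{\vec p})=\im(\psi_{\vec q})$ --- which I would deduce from $\C_{k,2}$-stability of $\overline\gamma$, since $\kappa\in\im(\psi_{\vec p})$ iff $|\{(x,y):\overline\gamma(\vec u\langle(k-1,k),(x,y)\rangle)=\kappa\}|\geq1$ and this cardinality is $[\vec u]_{\overline\gamma}$-invariant --- this gives both the coherent-configuration property and the algebraic isomorphism.

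To finish, the identity map on colours, being an algebraic isomorphism, extends linearly to a coherent algebra isomorphism $\phi\colon\mathbb{F}\mathcal{A}_{\psi_{\vec p}}\to\mathbb{F}\mathcal{A}_{\psi_{\vec q}}$ sending $\chi^{\overline\gamma,\vec u}_{(k-1,k),\sigma}$ to $\chi^{\overline\gamma,\vec v}_{(k-1,k),\sigma}$ for every $\sigma$ (both matrices being $0$ when $\sigma\notin\im(\psi_{\vec p})$). Since $\mathrm{char}(\mathbb{F})=0$ or $\mathrm{char}(\mathbb{F})>|V|$, Corollary~\ref{newbound} makes these coherent algebras semisimple, so the Skolem-N\"other consequence recorded earlier furnishes $S\in\mathrm{GL}_V(\mathbb{F})$ with $\phi(A)=SAS^{-1}$; this $S$ is the required matrix, and unwinding the reduction proves that $\overline\gamma$ is $\IM_k^\mathbb{F}$-stable. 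I expect the main obstacle to lie in the second paragraph: correctly reading off the constant intersection numbers of $\psi_{\vec p}$, together with their independence of the parameter tuple $\vec p$ within its $\overline\gamma$-class, from the one-free-variable $\WL_k$-stability counts --- the closing Skolem-N\"other step being routine given the machinery already in place.
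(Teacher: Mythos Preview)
Your proposal is correct and follows essentially the same approach as the paper: show that for each choice of the two substitution positions the matrices $\chi^{\overline\gamma,\cdot}_{\vec i,\sigma}$ form the standard basis of a coherent configuration, that the identity on colours is an algebraic isomorphism between the two configurations, and then invoke Corollary~\ref{newbound} and Skolem--N\"other to obtain $S$. The only organisational difference is that the paper works directly with the $(k+1)$-level partition $\gamma$ (defining a $\C_3$-stable auxiliary partition $g_{\vec v}$ on $V^3$ and projecting to get the coherent configuration), whereas you first project to $\overline\gamma$, establish its $\WL_k$-stability via Lemma~\ref{pff1}, and read off the intersection numbers from that; both routes land on the same algebraic-isomorphism step.
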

\begin{proof}
Let $\gamma\in\mathcal{P}(V^{k+1})$ be a graph-like $\C_{k+1}$-stable partition and fix some $\vec{i}\in[k+1]^{(3)}$ with $i_3=k+1$. For every $\vec{v}\in V^{k+1}$ define $g_{\vec{v}}$ to be the partition of $V^3$ given by $g_{\vec{v}}(\vec{x})=\gamma(\vec{v}\langle \vec{i},\vec{x}\rangle)$ for all $\vec{x}\in V^3$. Since $\gamma$ is $\C_{k+1}$-stable, it follows that $g_{\vec{v}}$ is $\C_3$ stable for all $\vec{v}\in V^{k+1}$. As $\gamma$ is graph-like, $\pr_2g_{\vec{v}}$ is a rainbow, and it is $\WL_2$-stable by Lemma \ref{pf1} and thus, a coherent configuration on $V$.

Set $\overline{\gamma}=\pr_k\gamma$ and for all $\vec{y}\in V^2$ let $\overline{g}_{\vec{v}}(\vec{y})=\overline{\gamma}(\pr_k\vec{v}\langle(i_1,i_2),\vec{y}\rangle)$. Then $\overline{g}_{\vec{v}}=\pr_2g_{\vec{v}}$. Therefore, all non-empty relations of $\overline{g}_{\vec{v}}$ form a coherent configuration whose $\mathbb{F}$-adjacency algebra has standard basis
$$
\{\chi^{\overline\gamma,\pr_k\vec{v}}_{(i_1,i_2),\sigma} \mid \exists \vec{y}\in V^2,\overline{\gamma}(\pr_k\vec{v}\langle (i_1,i_2),\vec{y}\rangle)=\sigma\}.
$$
Thus, if $\overline{\gamma}(\pr_k\vec{u})=\overline{\gamma}(\pr_k\vec{w})$ then $\overline{g}_{\vec{u}}$ and $\overline{g}_{\vec{w}}$ are algebraically isomorphic coherent configurations. More precisely, one can check that $\im(\overline{g}_{\vec{u}})=\im(\overline{g}_{\vec{v}})$ and that the map
$$
\begin{matrix}
\iota:\im(\overline{g}_{\vec{u}})& \rightarrow & \im(\overline{g}_{\vec{v}})\\
\sigma & \mapsto & \sigma
\end{matrix}
$$
is an algebraic isomorphism.

For $\mathrm{char}(\mathbb{F})=0$ or $\mathrm{char}(\mathbb{F})>|V|$ it follows from Corollary \ref{newbound} that there is some $S\in \mathrm{GL}_V(\mathbb{F})$ such that $S\chi_{(i_1,i_2),\sigma}^{\overline\gamma,\pr_k\vec{u}}S^{-1}=\chi_{(i_1,i_2),\sigma}^{\overline\gamma,\pr_k\vec{w}}$ for all $\sigma\in \im(\overline{\gamma})$. The result then follows.
\end{proof}
Note that in the above proof, there may be other bijections $\im(\overline{g}_{\vec{u}})\rightarrow\im(\overline{g}_{\vec{v}})$ which are algebraic isomorphisms. However, it follows from the definition of the $k$-refinement operator $\IM_k^\mathbb{F}$, that $\vec{u},\vec{v}\in V^k$ are in the same equivalence class of an $\IM_k^\mathbb{F}$-stable partition, only if $\iota$ as above is an algebraic isomorphism.
\begin{corollary}\label{imm2}
For all $k\in \nats$, graphs $\Gamma$ with vertex set $V$ and fields $\mathbb{F}$ such that $\mathrm{char}(\mathbb{F})=0$ or $\mathrm{char}(\mathbb{F})>|V|$.
$$
\overline{\WL}_k(\Gamma)\succeq \overline{\IM}_k^\mathbb{F}(\Gamma).
$$
\end{corollary}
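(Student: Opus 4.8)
The plan is to apply the general strategy used in the proof of Theorem~\ref{thm:main1}, instantiated with $\mathbf{X}=\mathbf{C}$ and $\mathbf{Y}=\mathbf{IM}(\mathbb{F})$; this mirrors the proof of Corollary~\ref{imm1} with the roles of $\mathbf{C}$ and $\mathbf{IM}(\mathbb{F})$ swapped, and all of the substantive work has already been carried out in Lemma~\ref{im2} and in the identity $\overline{\C}_{k+1}(\Gamma)\approx\overline{\WL}_k(\Gamma)$ of formula~(\ref{cf}). Fix a graph $\Gamma$ with vertex set $V$ and assume $\mathrm{char}(\mathbb{F})=0$ or $\mathrm{char}(\mathbb{F})>|V|$.

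First I would set $\Lambda=[\alpha_{k+1,\Gamma}]^{\C_{k+1}}$, which by Proposition~\ref{ug} is a graph-like, $\C_{k+1}$-stable $\Gamma$-partition of $V^{k+1}$. By Lemma~\ref{im2} its $k$-projection $\pr_k\Lambda$ is then an $\IM_k^{\mathbb{F}}$-stable partition of $V^k$; it is graph-like, since projections of graph-like partitions are graph-like, and since $\Lambda\succeq\alpha_{k+1,\Gamma}$ one checks directly from the definitions that $\pr_k\Lambda\succeq\alpha_{k,\Gamma}$, so $\pr_k\Lambda$ is also a $\Gamma$-partition. I would then use that $[\alpha_{k,\Gamma}]^{\IM_k^{\mathbb{F}}}$ is the coarsest $\IM_k^{\mathbb{F}}$-stable partition refining $\alpha_{k,\Gamma}$: since $\pr_k\Lambda$ is such a partition, $[\alpha_{k,\Gamma}]^{\IM_k^{\mathbb{F}}}\preceq\pr_k\Lambda$. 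Applying $\pr_2$ to both sides, and using the identity $\pr_2\pr_k\Lambda=\pr_2\Lambda$ which is immediate from the definition of the $r$-projection, gives
$$
\overline{\IM}_k^{\mathbb{F}}(\Gamma)=\pr_2[\alpha_{k,\Gamma}]^{\IM_k^{\mathbb{F}}}\preceq\pr_2\Lambda=\overline{\C}_{k+1}(\Gamma).
$$
Combined with formula~(\ref{cf}), this gives $\overline{\IM}_k^{\mathbb{F}}(\Gamma)\preceq\overline{\C}_{k+1}(\Gamma)\approx\overline{\WL}_k(\Gamma)$, which is exactly the assertion of the corollary.

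There is no genuine obstacle remaining once Lemma~\ref{im2} is in hand; that lemma, via Corollary~\ref{newbound} and the Skolem-N\"other theorem, is precisely where the hypothesis on $\mathrm{char}(\mathbb{F})$ enters, namely to guarantee that the relevant semisimple adjacency algebras are isomorphic through conjugation by a single invertible matrix. The only remaining points that deserve a line in the write-up are the routine check that $\pr_k\Lambda$ refines $\alpha_{k,\Gamma}$ and the standard monotonicity argument---already taken for granted when the general strategy was stated---that iterating $\IM_k^{\mathbb{F}}$ from $\alpha_{k,\Gamma}$ converges to the coarsest $\IM_k^{\mathbb{F}}$-stable refinement of $\alpha_{k,\Gamma}$.
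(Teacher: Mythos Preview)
Your proposal is correct and follows precisely the paper's approach: the paper's proof of this corollary is the single sentence ``This follows from Lemma~\ref{im2} and the fact that $\overline{\C}_{k+1}(\Gamma)\approx\overline{\WL}_k(\Gamma)$,'' and what you have written is exactly the unpacking of that sentence via the general strategy described at the start of Section~5. No differences in method.
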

\begin{proof}
This follows from Lemma \ref{im2} and the fact that $\overline{\C}_{k+1}(\Gamma)\approx\overline{\WL}_k(\Gamma)$.
\end{proof}
The statement of Theorem \ref{thm:main2} comes from taking $\mathbb{F}$ of characteristic $0$ in Corollaries \ref{imm1} and \ref{imm2}. Theorem \ref{thm:main3} arises instead from combining Corollary \ref{imm2} with a construction due to Holm~\cite{bjarki}.  The construction in the proof of Theorem 7.1 in~\cite{bjarki} gives, for each $k\in\mathbb{N}$ and prime number $p$, a graph $\Gamma_{k,p}$ for which $\overline{\WL}_k(\Gamma_{k,p})$ is \emph{strictly coarser} than $\Sch(\Gamma_{k,p})$ but $\overline{\IM}^{\mathbb{F}}_3(\Gamma_{k,p})=\Sch(\Gamma_{k,p})$, for any field $\mathbb{F}$ with $\mathrm{char}(\mathbb{F}) = p$.  Furthermore, from the same result, it also follows that $\overline{\IM}_k^{\mathbb{F}}(\Gamma_{k,p})$ is strictly coarser than $\Sch(\Gamma_{k,p})$ whenever $\mathrm{char}(\mathbb{F})\neq p$. This shows that the SPAS $\mathcal{S}_{\IM}({\mathbb{F}_1})$ and $\mathcal{S}_{\IM}({\mathbb{F}_2})$ are incomparable whenever $\mathrm{char}(\mathbb{F}_1)\neq \mathrm{char}(\mathbb{F}_2)$.

\section{Yet another refinement operator}
There is a subtle difference between the definitions of $\WL_{k,r}$ and $\C_{k,r}$: the colours of $\WL_{k,r}\circ\gamma$ are \emph{multisets of tuples} of colours of $\gamma$, whereas the colours of $\C_{k,r}\circ\gamma$ are \emph{tuples of multisets} of colours of $\gamma$. We now show that a similar variation in the definition of $\IM_k^{\mathbb{F}}$ gives a refinement procedure whose corresponding SPAS is equivalent to $\mathcal{S}_{\mathbf{IM}(\mathbb{F})}$.

For every $\gamma\in\mathcal{P}(V^k),\vec{v}\in V^k$ and $\vec{\phi}\in \Phi^{\gamma,2}$ let $\chi^{\gamma,\vec{v}}_{\vec{\phi}}$ be the adjacency matrix of the relation $\{\vec{x}\in V^2 \mid \gamma(\vec{v}\langle \vec{i},(u,v)\rangle)=\phi_{\vec{i}}\,\,\forall \vec{i}\in[k]^{(2)}\}\subseteq V^2$. Define the mapping $\IMt_k^\mathbb{F}$ by setting $\IMt^{\mathbb{F}}_1\circ\gamma=\IMt^\mathbb{F}_2\circ\gamma=\gamma$ and for $k> 2$:
$$
\begin{matrix}
\IMt_k^{\mathbb{F}}\circ\gamma: & V^k & \rightarrow & \im(\gamma)\times(\mathrm{Mat}_V(\mathbb{F})^{\Phi^{\gamma,2}}/\sim)\\
& \vec{v} &\mapsto & (\gamma(\vec{v}),(\chi^{\gamma,\vec{v}}_{\vec{\phi}})_{\vec{\phi}\in \Phi^{\gamma,2}})
\end{matrix}
$$
where the equivalence classes of the relation $\sim$ are the orbits of $GL_{V}(\mathbb{F})$ acting on the tuples by conjugation.

Similarly to Proposition \ref{ug}, one can show that $\IMt_k^\mathbb{F}$ is a graph-like $k$-refinement operator for all $k\in\nats$ and that $\mathbf{IMt}(\mathbb{F})=\{\IMt^{\mathbb{F}}_1,\IMt^{\mathbb{F}}_2,\hdots\}$ is a refinement procedure. Hence, the family $\mathcal{S}_{\mathbf{IMt}(\mathbb{F})}=\{\overline{\IMt}^{\mathbb{F}}_1,\overline{\IMt}^{\mathbb{F}}_2,\hdots\}$ is a SPAS for any field $\mathbb{F}$. Also, similarly to Proposition \ref{stab}, one may derive the following stability condition.
\begin{proposition}
A partition $\gamma\in\mathcal{P}(V^k)$ is $\IMt^{\mathbb{F}}_k$-stable if, and only if, for all $\vec{u},\vec{v}\in V^k$
$$
\gamma(\vec{u})=\gamma(\vec{v})\implies \exists S\in \mathrm{GL}_V(\mathbb{F}),\;\forall\vec{\phi}\in \Phi^{\gamma,2}\,S\chi^{\gamma,\vec{u}}_{\vec{\phi}}S^{-1}=\chi^{\gamma,\vec{v}}_{\vec{\phi}}. 
$$
\end{proposition}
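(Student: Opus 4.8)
The plan is to unfold the definition of $\IMt^{\mathbb{F}}_k$-stability in exactly the same manner as Proposition~\ref{stab}(3) is obtained for $\IM_k^{\mathbb{F}}$, so the argument is essentially bookkeeping. First I would use that $\IMt^{\mathbb{F}}_k$ is a refinement operator (as asserted just before the statement): the first coordinate of $\IMt^{\mathbb{F}}_k\circ\gamma(\vec{v})$ is $\gamma(\vec{v})$, so the partition induced by $\IMt^{\mathbb{F}}_k\circ\gamma$ always refines that induced by $\gamma$, i.e. $\gamma\preceq\IMt^{\mathbb{F}}_k\circ\gamma$. Consequently the condition $\gamma\approx\IMt^{\mathbb{F}}_k\circ\gamma$ defining stability is equivalent to the single reverse inclusion $\IMt^{\mathbb{F}}_k\circ\gamma\preceq\gamma$, which spells out as: for all $\vec{u},\vec{v}\in V^k$, $\gamma(\vec{u})=\gamma(\vec{v})\implies\IMt^{\mathbb{F}}_k\circ\gamma(\vec{u})=\IMt^{\mathbb{F}}_k\circ\gamma(\vec{v})$.

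Next I would record the elementary fact governing equality in the codomain of $\IMt^{\mathbb{F}}_k$, which is the exact analogue of the three facts preceding Proposition~\ref{stab}. An element of $\im(\gamma)\times(\Mat_V(\mathbb{F})^{\Phi^{\gamma,2}}/\sim)$ is a pair, so two such elements coincide precisely when both coordinates agree. Thus $\IMt^{\mathbb{F}}_k\circ\gamma(\vec{u})=\IMt^{\mathbb{F}}_k\circ\gamma(\vec{v})$ holds if and only if $\gamma(\vec{u})=\gamma(\vec{v})$ and the tuples $(\chi^{\gamma,\vec{u}}_{\vec{\phi}})_{\vec{\phi}\in\Phi^{\gamma,2}}$ and $(\chi^{\gamma,\vec{v}}_{\vec{\phi}})_{\vec{\phi}\in\Phi^{\gamma,2}}$ lie in the same $\sim$-class. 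By the definition of $\sim$ as the orbit equivalence of $\mathrm{GL}_V(\mathbb{F})$ acting on the whole tuple by simultaneous conjugation, the latter is exactly the existence of one $S\in\mathrm{GL}_V(\mathbb{F})$ with $S\chi^{\gamma,\vec{u}}_{\vec{\phi}}S^{-1}=\chi^{\gamma,\vec{v}}_{\vec{\phi}}$ for every $\vec{\phi}\in\Phi^{\gamma,2}$.

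Substituting this characterization into the implication from the first step yields, as the definition of stability, the statement that for all $\vec{u},\vec{v}$ with $\gamma(\vec{u})=\gamma(\vec{v})$ one has both $\gamma(\vec{u})=\gamma(\vec{v})$ and such an $S$. Since the hypothesis already provides $\gamma(\vec{u})=\gamma(\vec{v})$, the first conjunct is redundant and the condition collapses to precisely the displayed one, giving the ``only if'' direction; reading the same chain of equivalences backwards gives the ``if'' direction. I do not expect a genuine obstacle, as every step is an equivalence obtained by unwinding definitions. The only point I would treat carefully is the quantifier order in the second step: equality of the second coordinate requires a single $S$ that simultaneously conjugates all of the matrices $\chi^{\gamma,\vec{u}}_{\vec{\phi}}$ onto the $\chi^{\gamma,\vec{v}}_{\vec{\phi}}$, and not a separate conjugation for each $\vec{\phi}$ --- this is guaranteed exactly because $\sim$ is defined as simultaneous similarity of the entire tuple indexed by $\Phi^{\gamma,2}$, which is what secures the clean biconditional here in contrast to the one-directional formulation used for $\IM_k^{\mathbb{F}}$.
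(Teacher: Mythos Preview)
Your proposal is correct and is exactly what the paper intends: the paper does not give a proof but simply remarks that the statement is derived ``similarly to Proposition~\ref{stab}'', i.e.\ by unfolding the definition of $\IMt_k^{\mathbb{F}}$-stability, which is precisely the bookkeeping you carry out. Your careful note on the quantifier order (one $S$ for all $\vec{\phi}$, coming from the fact that $\sim$ is simultaneous similarity of the whole $\Phi^{\gamma,2}$-indexed tuple) is the only nontrivial point and you handle it correctly.
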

In particular, the following result is analogous to Lemma \ref{stab}. 
\begin{lemma}\label{stab2}
Any $\IMt^{\mathbb{F}}_k$-stable partition is also $\IM_k^{\mathbb{F}}$-stable.
\end{lemma}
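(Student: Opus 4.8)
The plan is to recognise $\IMt^{\mathbb{F}}_k$-stability as a straightforward strengthening of $\IM^{\mathbb{F}}_k$-stability: each matrix $\chi_{\vec{i},\sigma}^{\gamma,\vec{v}}$ appearing in the $\IM^{\mathbb{F}}_k$ stability condition of Proposition \ref{stab} is a sum of matrices $\chi^{\gamma,\vec{v}}_{\vec{\phi}}$ appearing in the $\IMt^{\mathbb{F}}_k$ condition. Concretely, the identity I would establish is that, for every $\vec{v}\in V^k$, every $\vec{i}\in[k]^{(2)}$ and every $\sigma\in\im(\gamma)$,
$$
\chi_{\vec{i},\sigma}^{\gamma,\vec{v}}=\sum_{\{\vec{\phi}\in\Phi^{\gamma,2}\,\mid\,\phi_{\vec{i}}=\sigma\}}\chi^{\gamma,\vec{v}}_{\vec{\phi}}.
$$

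To prove this, fix $\vec{v}$ and note that each pair $(x,y)\in V^2$ determines a unique tuple $\big(\gamma(\vec{v}\langle\vec{j},(x,y)\rangle)\big)_{\vec{j}\in[k]^{(2)}}\in\Phi^{\gamma,2}$; hence the relations $\{(x,y)\in V^2\mid \gamma(\vec{v}\langle\vec{j},(x,y)\rangle)=\phi_{\vec{j}}\;\forall\vec{j}\in[k]^{(2)}\}$, for $\vec{\phi}$ ranging over $\Phi^{\gamma,2}$, are pairwise disjoint with union $V^2$ (an unrealised $\vec{\phi}$ just contributing the empty relation). The relation whose adjacency matrix is $\chi_{\vec{i},\sigma}^{\gamma,\vec{v}}$ is exactly the union of those with $\phi_{\vec{i}}=\sigma$, and the adjacency matrix of a disjoint union of relations is the sum of their adjacency matrices, which gives the displayed identity. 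This is the only step requiring care; it is the same kind of bookkeeping as in Lemma \ref{useful} (formula (\ref{ro1})), and I expect it to be routine.

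It then remains to assemble the pieces. Suppose $\gamma\in\mathcal{P}(V^k)$ is $\IMt^{\mathbb{F}}_k$-stable and take $\vec{u},\vec{v}\in V^k$ with $\gamma(\vec{u})=\gamma(\vec{v})$. Since $\Phi^{\gamma,2}$ is determined by $\gamma$ alone, the $\IMt^{\mathbb{F}}_k$ stability condition supplies a single $S\in\mathrm{GL}_V(\mathbb{F})$ with $S\chi^{\gamma,\vec{u}}_{\vec{\phi}}S^{-1}=\chi^{\gamma,\vec{v}}_{\vec{\phi}}$ for all $\vec{\phi}\in\Phi^{\gamma,2}$. As conjugation by $S$ is linear, applying it to the displayed decomposition for $\vec{u}$ yields, for every $\vec{i}\in[k]^{(2)}$ and $\sigma\in\im(\gamma)$,
$$
S\chi_{\vec{i},\sigma}^{\gamma,\vec{u}}S^{-1}=\sum_{\{\vec{\phi}\in\Phi^{\gamma,2}\,\mid\,\phi_{\vec{i}}=\sigma\}}S\chi^{\gamma,\vec{u}}_{\vec{\phi}}S^{-1}=\sum_{\{\vec{\phi}\in\Phi^{\gamma,2}\,\mid\,\phi_{\vec{i}}=\sigma\}}\chi^{\gamma,\vec{v}}_{\vec{\phi}}=\chi_{\vec{i},\sigma}^{\gamma,\vec{v}}.
$$
In particular, for each $\vec{i}\in[k]^{(2)}$ this $S$ witnesses the implication in part~(3) of Proposition \ref{stab}, so $\gamma$ is $\IM^{\mathbb{F}}_k$-stable; one even obtains the same $S$ uniformly in $\vec{i}$, slightly more than required. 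The only real work is thus the combinatorial identity of the first step.
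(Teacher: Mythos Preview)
Your proposal is correct and is essentially identical to the paper's proof: both establish the decomposition $\chi_{\vec{i},\sigma}^{\gamma,\vec{v}}=\sum_{\{\vec{\phi}\in\Phi^{\gamma,2}\mid\phi_{\vec{i}}=\sigma\}}\chi^{\gamma,\vec{v}}_{\vec{\phi}}$ and then conjugate by the single $S$ supplied by $\IMt^{\mathbb{F}}_k$-stability. You have merely spelled out the disjoint-union justification for the identity more explicitly than the paper does.
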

\begin{proof}
For any $\vec{v}\in V^k,\vec{i}\in [k]^{(2)}$ and $\sigma \in \im(\gamma)$
$$
\chi^{\gamma,\vec{v}}_{\sigma,\vec{i}}=\sum_{\{\vec{\phi}\in \Phi^{\gamma,2} \mid \phi_{\vec{i}}=\sigma\}}\chi^{\gamma,\vec{v}}_{\vec{\phi}}.
$$
Hence if $\gamma(\vec{u})=\gamma(\vec{v})$, there is some $S\in \mathrm{GL}_V(\mathbb{F})$ such that $S\chi^{\gamma,\vec{v}}_{\vec{\phi}}S^{-1}=\chi^{\gamma,\vec{u}}_{\vec{\phi}}$ for all $\vec{\phi}\in \Phi^{\gamma,2}$. Thus
$$
S\chi^{\gamma,\vec{v}}_{\sigma,\vec{i}}S^{-1}=\sum_{\{\vec{\phi}\in \Phi^{\gamma,2} \mid \phi_{\vec{i}}=\sigma\}}S\chi^{\gamma,\vec{v}}_{\vec{\phi}}S^{-1}=\sum_{\{\vec{\phi}\in \Phi^{\gamma,2} \mid \phi_{\vec{i}}=\sigma\}}\chi^{\gamma,\vec{u}}_{\vec{\phi}}=\chi_{\sigma,\vec{i}}^{\gamma,\vec{u}}
$$
from which follows that $\gamma$ is $\IM^{\mathbb{F}}_k$-stable.
\end{proof}
\begin{corollary}\label{imt12}
For any graph $\Gamma$, field $\mathbb{F}$ and $k\in\nats$
$$
\overline{\IM}^{\mathbb{F}}_k(\Gamma)\preceq \overline{\IMt}^{\mathbb{F}}_k(\Gamma).
$$
\end{corollary}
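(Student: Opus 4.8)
The plan is to deduce the corollary from Lemma~\ref{stab2} by the same comparison argument used in the proof of Theorem~\ref{thm:main1}. First I would unwind the definitions: for $k\geq 2$ we have $\overline{\IM}^{\mathbb{F}}_k(\Gamma)=\pr_2[\alpha_{k,\Gamma}]^{\IM_k^{\mathbb{F}}}$ and $\overline{\IMt}^{\mathbb{F}}_k(\Gamma)=\pr_2[\alpha_{k,\Gamma}]^{\IMt_k^{\mathbb{F}}}$, and I would record that $\pr_2$ is monotone with respect to $\preceq$: if $\gamma\preceq\gamma'$ then $\pr_2\gamma\preceq\pr_2\gamma'$, since $\pr_2$ merely reads off the values of a partition on a fixed subfamily of tuples. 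So it suffices to prove $[\alpha_{k,\Gamma}]^{\IM_k^{\mathbb{F}}}\preceq[\alpha_{k,\Gamma}]^{\IMt_k^{\mathbb{F}}}$.

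For $k\leq 2$ the statement is immediate, since $\IM_k^{\mathbb{F}}$ and $\IMt_k^{\mathbb{F}}$ are both the identity operator and the two sides coincide. For $k>2$, set $\gamma=[\alpha_{k,\Gamma}]^{\IMt_k^{\mathbb{F}}}$; by the analogue of Proposition~\ref{ug} for $\mathbf{IMt}(\mathbb{F})$ this is a graph-like, $\IMt_k^{\mathbb{F}}$-stable $\Gamma$-partition of $V^k$. Lemma~\ref{stab2} then gives that $\gamma$ is also $\IM_k^{\mathbb{F}}$-stable, and by construction it refines $\alpha_{k,\Gamma}$. Since $[\alpha_{k,\Gamma}]^{\IM_k^{\mathbb{F}}}$ is the coarsest $\IM_k^{\mathbb{F}}$-stable partition refining $\alpha_{k,\Gamma}$, we conclude $[\alpha_{k,\Gamma}]^{\IM_k^{\mathbb{F}}}\preceq\gamma$, and applying $\pr_2$ yields the corollary.

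The one step deserving a line of justification is the minimality assertion, namely that $[\alpha]^{R}$ is $\preceq$ every $R$-stable partition refining $\alpha$; this is exactly the property already invoked tacitly in the comparison strategy set out at the start of our proof of Theorem~\ref{thm:main1}, and it follows once $R=\IM_k^{\mathbb{F}}$ is seen to be monotone. That monotonicity I would verify as follows: if $\gamma'$ refines $\gamma$, then each $\chi^{\gamma,\vec{v}}_{\vec{i},\sigma}$ is a sum of matrices of the form $\chi^{\gamma',\vec{v}}_{\vec{i},\sigma'}$, so any simultaneous conjugation witnessing $\IM_k^{\mathbb{F}}\circ\gamma'(\vec{u})=\IM_k^{\mathbb{F}}\circ\gamma'(\vec{v})$ also witnesses $\IM_k^{\mathbb{F}}\circ\gamma(\vec{u})=\IM_k^{\mathbb{F}}\circ\gamma(\vec{v})$, while $\gamma'(\vec{u})=\gamma'(\vec{v})$ trivially forces $\gamma(\vec{u})=\gamma(\vec{v})$. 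Since this is routine and the entire substantive content is carried by Lemma~\ref{stab2}, I do not anticipate any genuine obstacle; the written proof will be only a few lines.
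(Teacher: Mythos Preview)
Your proposal is correct and follows exactly the route the paper intends: the corollary is stated with no written proof and is meant to be read off directly from Lemma~\ref{stab2} via the general comparison strategy spelled out at the beginning of Section~5, just as Corollary~\ref{pf4} follows from Lemma~\ref{pf1}. Your explicit unwinding of the minimality argument and the monotonicity check for $\IM_k^{\mathbb{F}}$ go beyond what the paper writes down, but they are the correct justifications for what the paper is tacitly using.
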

In the following result, the argument, is analogous to that of Lemma \ref{pff1}.
\begin{lemma}
The $k$-projection of an $\IM_{k+2}^{\mathbb{F}}$-stable partition is $\IMt^{\mathbb{F}}_k$-stable.
\end{lemma}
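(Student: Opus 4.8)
The plan is to follow the scheme of the proof of Lemma~\ref{pff1}. Let $\gamma\in\mathcal{P}(V^{k+2})$ be a graph-like $\IM_{k+2}^{\mathbb{F}}$-stable partition and set $\overline{\gamma}=\pr_k\gamma$; since $\IMt_k^{\mathbb{F}}$ is the identity operator for $k\le 2$, we may assume $k\ge 3$. For $\vec{v}\in V^k$ write $\hat{\vec{v}}=(v_1,\dots,v_k,v_k,v_k)\in V^{k+2}$, so that $\overline{\gamma}(\vec{v})=\gamma(\hat{\vec{v}})$, and fix once and for all the index $\vec{j}=(k+1,k+2)\in[k+2]^{(2)}$, noting that $\hat{\vec{v}}\langle\vec{j},\vec{x}\rangle=(v_1,\dots,v_k,x_1,x_2)$ for $\vec{x}\in V^2$.

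The core step is to set up a boundary map $\vec\Delta$ playing the role of the $\vec\Delta$ in Lemma~\ref{pff1}. The claim is that for every $\vec{v}\in V^k$ and $\vec{x}\in V^2$ the profile $\big(\overline{\gamma}(\vec{v}\langle\vec{i},\vec{x}\rangle)\big)_{\vec{i}\in[k]^{(2)}}\in\Phi^{\overline\gamma,2}$ depends only on the colour $\gamma(\hat{\vec{v}}\langle\vec{j},\vec{x}\rangle)$. Fixing $\vec{i}=(i_1,i_2)\in[k]^{(2)}$, one checks by a case analysis on whether $k$ occurs among $i_1,i_2$ that there are an index tuple $\vec{p}$ with pairwise distinct entries in $[k+2]$ and a tuple $\vec{q}\in[k+2]^{|\vec{p}|}$, both depending only on $\vec{i}$ and $k$, such that, writing $\vec{w}=\vec{v}\langle\vec{i},\vec{x}\rangle$, the $(k+2)$-tuple $(w_1,\dots,w_k,w_k,w_k)$ (whose $\gamma$-colour is $\overline{\gamma}(\vec{w})$) equals $\big(\hat{\vec{v}}\langle\vec{j},\vec{x}\rangle\big)\big\langle\vec{p},\pr_{\vec{q}}(\hat{\vec{v}}\langle\vec{j},\vec{x}\rangle)\big\rangle$; for instance, when $i_1,i_2<k$ one takes $\vec{p}=(i_1,i_2,k+1,k+2)$ and $\vec{q}=(k+1,k+2,k,k)$. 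Applying Lemma~\ref{veryuseful} to the graph-like $\gamma$, it follows that $\gamma(\hat{\vec{v}}\langle\vec{j},\vec{x}\rangle)=\gamma(\hat{\vec{w}}\langle\vec{j},\vec{y}\rangle)$ implies $\overline{\gamma}(\vec{v}\langle\vec{i},\vec{x}\rangle)=\overline{\gamma}(\vec{w}\langle\vec{i},\vec{y}\rangle)$ for all $\vec{i}\in[k]^{(2)}$, which establishes the claim and lets us define $\vec\Delta$ by $\Delta_{\gamma(\hat{\vec{v}}\langle\vec{j},\vec{x}\rangle)}=\big(\overline{\gamma}(\vec{v}\langle\vec{i},\vec{x}\rangle)\big)_{\vec{i}\in[k]^{(2)}}$ on the colours of $\gamma$ of the form $\gamma(\hat{\vec{v}}\langle\vec{j},\vec{x}\rangle)$ (on the remaining colours the relations appearing below are empty, so $\vec\Delta$ may be extended arbitrarily).

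From the definition of $\vec\Delta$ it then follows that, for each $\vec{v}\in V^k$ and $\vec{\phi}\in\Phi^{\overline\gamma,2}$,
$$\{\vec{x}\in V^2\mid \overline{\gamma}(\vec{v}\langle\vec{i},\vec{x}\rangle)=\phi_{\vec{i}}\ \ \forall\,\vec{i}\in[k]^{(2)}\}=\bigcup_{\{\sigma\in\im(\gamma)\,\mid\,\Delta_\sigma=\vec{\phi}\}}\{\vec{x}\in V^2\mid \gamma(\hat{\vec{v}}\langle\vec{j},\vec{x}\rangle)=\sigma\},$$
and this union is disjoint, so passing to adjacency matrices gives $\chi^{\overline\gamma,\vec{v}}_{\vec{\phi}}=\sum_{\{\sigma\,\mid\,\Delta_\sigma=\vec{\phi}\}}\chi^{\gamma,\hat{\vec{v}}}_{\vec{j},\sigma}$. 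Now take $\vec{u},\vec{v}\in V^k$ with $\overline{\gamma}(\vec{u})=\overline{\gamma}(\vec{v})$; then $\gamma(\hat{\vec{u}})=\gamma(\hat{\vec{v}})$, so by $\IM_{k+2}^{\mathbb{F}}$-stability of $\gamma$ applied at the fixed index $\vec{j}$ there is an $S\in\mathrm{GL}_V(\mathbb{F})$ with $S\chi^{\gamma,\hat{\vec{u}}}_{\vec{j},\sigma}S^{-1}=\chi^{\gamma,\hat{\vec{v}}}_{\vec{j},\sigma}$ for all $\sigma\in\im(\gamma)$. Summing over the $\sigma$ with $\Delta_\sigma=\vec{\phi}$ yields $S\chi^{\overline\gamma,\vec{u}}_{\vec{\phi}}S^{-1}=\chi^{\overline\gamma,\vec{v}}_{\vec{\phi}}$ for every $\vec{\phi}\in\Phi^{\overline\gamma,2}$, which is precisely $\IMt_k^{\mathbb{F}}$-stability of $\overline{\gamma}$.

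I expect the only delicate point to be the case analysis underlying the boundary map: one must verify, for each position of $k$ inside $\vec{i}$, that the padded tuple built from $\vec{v}\langle\vec{i},\vec{x}\rangle$ is obtained from $\hat{\vec{v}}\langle\vec{j},\vec{x}\rangle$ by a single $\pr$-substitution on pairwise distinct coordinates, so that Lemma~\ref{veryuseful} applies verbatim. Everything else — the disjointness of the relations, the translation into sums of $0$-$1$ adjacency matrices, and the single application of $\IM_{k+2}^{\mathbb{F}}$-stability at the index $\vec{j}$ — is routine and parallels Lemma~\ref{pff1}.
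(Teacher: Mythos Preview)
Your proof is correct and follows essentially the same route as the paper: define the boundary map $\vec\Delta$ via Lemma~\ref{veryuseful}, decompose the $\vec\phi$-relation as a disjoint union over $\{\sigma\mid\Delta_\sigma=\vec\phi\}$, translate to a sum of adjacency matrices, and apply $\IM_{k+2}^{\mathbb{F}}$-stability at the single index $\vec{j}=(k+1,k+2)$. Your explicit case analysis on whether $k$ occurs in $\vec{i}$ (and the use of lifts $\hat{\vec v}$) is more detailed than the paper's direct invocation of Lemma~\ref{veryuseful}, but the underlying argument is the same.
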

\begin{proof}
Let $\vec{u},\vec{v}\in V^k$ be such that $\gamma(\vec{u})=\gamma(\vec{v})$, and set $\overline{\gamma}=\pr_k\gamma$. Because $\gamma$ is graph-like, it follows from Lemma \ref{veryuseful} that for all $\vec{i}\in[k+2]^{(2)}$
$$
\gamma(\vec{v}\langle \vec{i},\pr_{\vec{j}}\vec{v}\rangle)=\gamma(\vec{u}\langle \vec{i},\pr_{\vec{j}}\vec{u}\rangle)
$$
where $\vec{j}=(k+1,k+2)\in[k+2]^{(2)}$. In particular, for all $\vec{i}\in[k]^{(2)}$
\begin{equation}\label{defim}
\overline{\gamma}(\pr_k\vec{v}\langle \vec{i},\pr_{\vec{j}}\vec{v}\rangle)=\overline{\gamma}(\pr_k\vec{u}\langle \vec{i},\pr_{\vec{j}}\vec{u}\rangle).
\end{equation}
Define $\vec{\Delta}\in (\Phi^{\overline\gamma,2})^{\im(\gamma)}$
$$
(\Delta_{\gamma(\vec{v})})_{\vec{i}}=\overline{\gamma}(\pr_k\vec{v}\langle \vec{i},\pr_{\vec{j}}\vec{v}\rangle).
$$
One deduces from formula (\ref{defim}) that $\vec{\Delta}$ is well defined.

For any $\vec{\phi}\in \Phi$
$$
\{\vec{u}\in V^2 \mid \overline{\gamma}(\pr_k\vec{v}\langle \vec{i},\vec{u}\rangle)=\phi_{\vec{i}} \,\,\forall \vec{i}\in [k]^{(2)}\}=\bigcup_{\{\sigma\in\im(\gamma) \mid \Delta_\sigma=\vec{\phi}\}}\{\vec{u}\in V^2 \mid \gamma(\vec{v}\langle\vec{j},\vec{u}\rangle)=\sigma\}.
$$
Hence, for all $\vec{\phi}\in \Phi$ it holds that
$$
\chi^{\gamma,\vec{v}}_{\vec{\phi}}=\sum_{\{\sigma\in\im(\gamma) \mid \Delta_\sigma=\vec{\phi}\}} \chi^{\gamma,\vec{v}}_{\sigma,\vec{j}}.
$$
Because $\gamma$ is $\IM_{k+2}^{\mathbb{F}}$-stable, and $\gamma(\vec{u})=\gamma(\vec{v})$, for each $\vec{i}\in [k]^{(2)}$ there is some $S\in \mathrm{GL}_V(\mathbb{F})$ such that $S\chi_{\sigma,\vec{i}}^{\gamma,\vec{v}}S^{-1}=\chi_{\sigma,\vec{i}}^{\gamma,\vec{u}}$ for all $\sigma\in \im(\gamma)$. Thus, for all $\vec{\phi}\in \im(\overline{\gamma})^{[k]^{(2)}}$:
$$
S\chi_{\vec{\phi}}^{\gamma,\vec{v}}S^{-1}=\sum_{\{\sigma\in\im(\gamma) \mid \Delta_\sigma=\vec{\phi}\}}S\chi_{\sigma,\vec{j}}^{\gamma,\vec{v}}S^{-1}=\sum_{\{\sigma\in\im(\gamma) \mid \Delta_\sigma=\vec{\phi}\}}\chi^{\vec{u}}_{\sigma,\vec{j}}=\chi_{\vec{\phi}}^{\gamma,\vec{u}},
$$
whence, $\overline{\gamma}(\vec{u})=\overline{\gamma}(\vec{v})$ and therefore $\overline{\gamma}$ is $\IMt^{\mathbb{F}}_k$-stable.
\end{proof}
\begin{corollary}\label{imt11}
For any graph $\Gamma$, field $\mathbb{F}$ and $k\in\nats$
$$
\overline{\IMt}^{\mathbb{F}}_k(\Gamma)\preceq \overline{\IM}_{k+2}^{\mathbb{F}}(\Gamma).
$$
\end{corollary}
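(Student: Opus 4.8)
The plan is to use the same strategy used to prove Theorem~\ref{thm:main1}: for each $k$ we produce, with $k'=k+2$, an $\IM_{k'}^{\mathbb{F}}$-stable graph-like $\Gamma$-partition of $V^{k'}$ whose $2$-projection refines $\overline{\IMt}^{\mathbb{F}}_k(\Gamma)$, and the key input is the Lemma immediately preceding this corollary. The cases $k\le 2$ are immediate, since $\IMt^{\mathbb{F}}_1$ and $\IMt^{\mathbb{F}}_2$ act as the identity, so we may assume $k\geq 3$.

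First I would take $\gamma'=[\alpha_{k+2,\Gamma}]^{\IM_{k+2}^{\mathbb{F}}}$. By Proposition~\ref{ug}, $\mathbf{IM}(\mathbb{F})$ is a refinement procedure, so $\gamma'$ is a graph-like partition of $V^{k+2}$; it is $\IM_{k+2}^{\mathbb{F}}$-stable by construction and it refines $\alpha_{k+2,\Gamma}$, hence is a $\Gamma$-partition. The preceding Lemma then gives that $\pr_k\gamma'$ is $\IMt^{\mathbb{F}}_k$-stable. Moreover $\pr_k\gamma'$ is graph-like (projections of graph-like partitions are graph-like, as noted in Section~\ref{gps}), and since $\gamma'$ refines $\alpha_{k+2,\Gamma}$ and projection is monotone with respect to $\preceq$, the partition $\pr_k\gamma'$ refines $\pr_k\alpha_{k+2,\Gamma}$, which in turn refines $\alpha_{k,\Gamma}$; thus $\pr_k\gamma'$ is an $\IMt^{\mathbb{F}}_k$-stable graph-like $\Gamma$-partition of $V^k$.

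To finish, recall that $\overline{\IMt}^{\mathbb{F}}_k(\Gamma)=\pr_2[\alpha_{k,\Gamma}]^{\IMt^{\mathbb{F}}_k}$, where $[\alpha_{k,\Gamma}]^{\IMt^{\mathbb{F}}_k}$ is the coarsest $\IMt^{\mathbb{F}}_k$-stable partition refining $\alpha_{k,\Gamma}$ (the iterated-refinement construction yields the coarsest stable refinement, the same fact used in the proof of Theorem~\ref{thm:main1}). Since $\pr_k\gamma'$ is one such stable refinement, $[\alpha_{k,\Gamma}]^{\IMt^{\mathbb{F}}_k}\preceq \pr_k\gamma'$, and projecting onto pairs, using $\pr_2\pr_k\gamma'=\pr_2\gamma'$, yields
\[
\overline{\IMt}^{\mathbb{F}}_k(\Gamma)=\pr_2[\alpha_{k,\Gamma}]^{\IMt^{\mathbb{F}}_k}\preceq\pr_2\gamma'=\overline{\IM}_{k+2}^{\mathbb{F}}(\Gamma),
\]
as required.

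There is no genuine obstacle here once the preceding Lemma is available; the only delicate point is the assertion that every $\IMt^{\mathbb{F}}_k$-stable partition refining $\alpha_{k,\Gamma}$ also refines $[\alpha_{k,\Gamma}]^{\IMt^{\mathbb{F}}_k}$, which rests on the monotonicity of the operator $\IMt^{\mathbb{F}}_k$ with respect to $\preceq$ (established together with the refinement-procedure properties of $\mathbf{IMt}(\mathbb{F})$). Finally I would observe that combining this corollary with Corollary~\ref{imt12} establishes $\mathcal{S}_{\mathbf{IM}(\mathbb{F})}\simeq\mathcal{S}_{\mathbf{IMt}(\mathbb{F})}$, i.e.\ Theorem~\ref{thm:main4}.
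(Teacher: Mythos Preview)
Your argument is correct and follows exactly the general strategy the paper uses throughout (and spells out at the start of the proof of Theorem~\ref{thm:main1}): take $\gamma'=[\alpha_{k+2,\Gamma}]^{\IM_{k+2}^{\mathbb{F}}}$, apply the preceding Lemma to get that $\pr_k\gamma'$ is an $\IMt^{\mathbb{F}}_k$-stable graph-like $\Gamma$-partition, and conclude by minimality of $[\alpha_{k,\Gamma}]^{\IMt^{\mathbb{F}}_k}$. The paper leaves this corollary unproved precisely because it is an instance of that template, so your write-up is a faithful unpacking of what is intended.
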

Theorem \ref{thm:main4} follows from Corollaries \ref{imt11} and \ref{imt12}.

\section{Conclusions}
The Weisfeiler-Leman algorithm is much studied in the context of graph
isomorphism.  It is really a family of algorithms, graded by a
dimension parameter.  A large number of other families of algorithms
have been shown to give essentially the same graded approximations of
isomorphism.  The Schurian polynomial approximation schemes of
Evdokimov et al.\ provide a general framework for comparing these
families of algorithms.
The invertible map operators of Dawar and Holm provide another such
family of algorithms (or, more formally in the language of this paper, \emph{refinement procedure}), but one that has greater distinguishing power than the Weisfeiler-Leman family.  In the same way as $\mathbf{WL}_r$ and $\mathbf{C}_r$ were obtained from $\mathbf{WL}$ and $\mathbf{C}$, one can generalize $\mathbf{IM}(\mathbb{F})$ as follows: for every $k,r\in\mathbb{N}$ define the $k$-refinement operator $\IM_{k,r}^\mathbb{F}$ by setting $\IM_{k,r}^{\mathbb{F}}\circ\gamma=\gamma$ when $k\leq 2r$.  When $k>2r$ define:
$$
\begin{matrix}
\IM_{k,r}^{\mathbb{F}}\circ\gamma: & V^k\times& \rightarrow & \im(\gamma)\times(\mathrm{Mat}_{V^r}(\mathbb{F})^{\im(\gamma)\times [k]^{(2r)}}/\sim)\\
& \vec{v}&\mapsto & (\gamma(\vec{v}),((\chi^{\gamma,\vec{v}}_{\vec{i},\sigma})_{\sigma\in \im(\gamma)})_{\vec{i}\in [k]^{(2r)}})
\end{matrix}
$$
where $\chi^{\gamma,\vec{v}}_{\vec{i},\sigma}$ is the adjacency matrix of the relation $\{(\vec{x},\vec{y}) \mid \gamma(\vec{v}\langle \vec{i},\vec{x}\cdot\vec{y}\rangle)=\sigma\}\subseteq (V^r)^2$ and $\sim$ is the relation whose equivalence classes are the orbits of $GL_{V^r}(\mathbb{F})$ acting on the tuples by conjugation. One can show that $\mathbf{IM}_r(\mathbb{F})=\{\IM_{1,r}^\mathbb{F},\IM_{2,r}^\mathbb{F},\hdots\}$ is a refinement procedure for all $r\in\nats$. One can thus derive from it a SPAS $\mathcal{S}_{\mathbf{IM}(\mathbb{F}),r}$ in the same manner as described in Section \ref{rop}.  While we were able to show that the refinement procedures $\mathbf{WL}_r$ and $\mathbf{C}_r$ do not yield SPAS' more powerful than that yielded by $\mathbf{WL}$,  the exact relation between $\mathcal{S}_{\mathbf{IM}(\mathbb{F})}$ and $\mathcal{S}_{\mathbf{IM}(\mathbb{F}),r}$ is still unclear and an interesting open question.

\bibliographystyle{alpha}
\bibliography{WL(1)}

\end{document}